\numberwithin{equation}{section}
\newtheorem{thm}{Theorem}[section]
\newtheorem{prop}[thm]{Proposition}
\newtheorem{lem}[thm]{Lemma}
\newtheorem{cor}[thm]{Corollary}
\newtheorem{defn}[thm]{Definition}
\theoremstyle{definition}
\newtheorem{rem}[thm]{Remark}
\newcommand{\bmu}{\boldsymbol \mu}
\newcommand{\bnu}{\boldsymbol \nu}
\newcommand{\brho}{\boldsymbol \rho}
\newcommand{\bsigma}{\boldsymbol \sigma}
\renewcommand{\div}{\operatorname{div}}
\newcommand{\N}{\mathbb N}
\newcommand{\R}{\mathbb R}
\newcommand{\supp}{\operatorname{supp}}
\newcommand{\weakto}{\rightharpoonup}
\newcommand{\xto}[1]{\xrightarrow{ #1 }}
\newcommand{\xweakto}[1]{ \stackrel{ #1 }{\rightharpoonup} }
\newcommand{\Z}{\mathbb Z}
\newcommand{\e}{\varepsilon}
\newcommand{\bW}{\mathbf W}
\def\T{\mathbb T}
\def\TT{{\scaleto{\T\mathstrut}{6pt}}}
\def\Tt{{\scaleto{\T^2\mathstrut}{7pt}}}
\def\Td{{\scaleto{\T^d\mathstrut}{7pt}}}
\renewcommand\P{\mathcal P}
\DeclareMathOperator\diam{diam}
\DeclareMathOperator\Ent{Ent}
\DeclareMathOperator\Exp{Exp}
\DeclareMathOperator\EExp{EXP}
\def\weakto{\rightharpoonup}
\DeclareFontFamily{U}{mathx}{\hyphenchar\font45}
\DeclareFontShape{U}{mathx}{m}{n}{
      <5> <6> <7> <8> <9> <10>
      <10.95> <12> <14.4> <17.28> <20.74> <24.88>
      mathx10
      }{}
\DeclareSymbolFont{mathx}{U}{mathx}{m}{n}
\DeclareMathAccent{\widecheck}{0}{mathx}{"71}
\begin{document}

\title[Many-particle evolutions with multiple signs]{Convergence and non-convergence of many-particle evolutions with multiple signs}

\author[A. Garroni]{Adriana Garroni} \author[P. van Meurs]{Patrick van Meurs} \author[M.A. Peletier]{Mark A. Peletier} \author[L. Scardia]{Lucia Scardia}

\address[A. Garroni]{Dipartimento di Matematica, Sapienza Universit\`a di Roma, Italy}
\email{garroni@mat.uniroma1.it}

\address[P. van Meurs]{Institute of Liberal Arts and Science, Kanazawa University, Japan}
\email{pjpvmeurs@staff.kanazawa-u.ac.jp}

\address[M.A. Peletier]{Department of Mathematics and Computer Science and Institute for Complex Molecular Systems, Eindhoven University of Technology, The Netherlands}
\email{M.A.Peletier@tue.nl}

\address[L. Scardia]{Department of Mathematics, Heriot-Watt University, United Kingdom} 
\email{L.Scardia@hw.ac.uk}

\begin{abstract} 
We address the question of convergence of evolving interacting particle systems as the number of particles tends to infinity. We consider two types of particles, called positive and negative. Same-sign particles repel each other, and opposite-sign particles attract each other. The interaction potential is the same for all particles, up to the sign, and has a logarithmic singularity at zero. The central example of such systems is that of dislocations in crystals.

Because of the singularity in the interaction potential, the discrete evolution leads to blow-up in finite time. We remedy this situation by regularising  the interaction potential at a length-scale $\delta_n>0$, which converges to zero as the number of particles $n$ tends to infinity.

We establish two main results. The first one is an evolutionary convergence result showing that the empirical measures of the positive and of the negative particles converge to a solution of a set of coupled PDEs which describe the evolution of their continuum densities. In the setting of dislocations these PDEs are known as the Groma-Balogh equations. In the proof we rely on both the theory of $\lambda$-convex gradient flows, to establish a quantitative bound on the distance between the empirical measures and the continuum solution to a $\delta_n$-regularised version of the Groma-Balogh equations, and \emph{a priori} estimates for the Groma-Balogh equations to pass to the small-regularisation limit in a functional setting based on Orlicz spaces. In order for the quantitative bound not to degenerate too fast in the limit $n \to \infty$ we require $\delta_n$ to converge to zero sufficiently slowly. 
The second result is a counterexample, demonstrating that if $\delta_n$ converges to zero sufficiently \emph{fast}, then the limits of the empirical measures of the positive and the negative dislocations do not satisfy the Groma-Balogh equations. 

These results show how the validity of the Groma-Balogh equations as the limit of many-particle systems depends in a subtle way on the scale at which the singularity of the potential is regularised.
\end{abstract}

\maketitle

\section{Introduction}
\label{sec:Introduction}

\subsection{Multi-sign particle systems and dislocations}

There is a vast literature on the properties of interacting particle systems, both deterministic and stochastic, as well as on their limits as the number of particles tends to infinity. The most common situation in the rigorous mathematical literature is that of indistinguishable particles. For many systems of this type, especially with bounded interactions,  the many-particle limit has been characterised in various ways; see e.g.~\cite{Ruelle69,Spohn80,Spohn91,KipnisLandim99,BraidesGelli-lr-02} for reviews. For unbounded interaction forces the situation is more delicate, and rigorous proofs of convergence are more recent~\cite{Ponsiglione07,SS-book07, Hauray09,CarrilloFerreiraPrecioso12,CarrilloChoiHauray14,GeersPeerlingsPeletierScardia13,ScardiaPeerlingsPeletierGeers14,VanMeursMuntean14,VanMeursMunteanPeletier14,AlicandroDe-LucaGarroniPonsiglione14,MoraPeletierScardia14TR}.

In this paper we study a type of particle system that is less intensively studied: a system with two species, called positive and negative particles \cite{GarroniLeoniPonsiglione10,
ChapmanXiangZhu15, 
DiFrancescoFagioli13, 
BerendsenBurgerPietschmann17, 
EversFetecauKolokolnikov17,
vanMeurs17}. In our setting, we also consider singular interactions. Such systems arise naturally as models of the evolution of \emph{dislocations} in crystals. Since this example is important for our work, we now explain it in some detail.

\medskip

Dislocations are defects in an atomic lattice, and they are central to the theory of plastic deformation. A dislocation can be viewed as a \emph{quantum of plastic slip:} the smallest amount of plastic deformation that the lattice admits. Macroscopic, continuum-scale plastic deformation is the collective result of the motion of a large number of dislocations. 

Models of plastic deformation vary depending on the scale at which they describe the system. At scales of millimeters or larger, plastic deformation is well described by continuum-level theories (see e.g.~\cite[Ch.6]{Callister07}). 
However, these continuum-level theories often break down at scales of $1{-}100$ $\mu$m
where the length scales of the specimen size and the dislocation distribution become comparable. At such smaller scales, a richer set of unknowns is used,  the \emph{densities of dislocations}, and  equations are formulated for the evolution of these densities. At even smaller scales, the postulate of a `smooth density' fails, and a description becomes necessary in terms of the individual dislocations and their motion: such systems are called \emph{discrete dislocation} systems. This paper is concerned with the transition between the latter two types of models: from discrete dislocations to dislocation densities, as the number of dislocations becomes large. 

A popular evolution model for  dislocation densities was derived by Groma and Balogh in \cite{Groma97,GromaBalogh99} and later refined in \cite{GromaCsikorZaiser03,GromaGyorgyiKocsis06}. It describes the evolution of the density of `positive' and `negative' straight and parallel edge dislocations, which are represented by `positive' and `negative' points in two dimensions. This model, and elaborations of it~\cite{GromaVandrusIspanovity15,GromaZaiserIspanovity16} have been used in the engineering community to predict dislocation density profiles with surprising accuracy~\cite{YefimovGromaVanderGiessen04,YefimovVanderGiessen05a,GeersPeerlingsHoefnagelsKasyanyuk09,DoggePeerlingsGeers15a}. The original system of equations from~\cite{GromaBalogh99} is central to this paper; we will refer to it as the \emph{Groma-Balogh} equations, and it appears in generalized form as equations~\eqref{f:GBintro} below.

The derivations in~\cite{Groma97,GromaBalogh99,GromaCsikorZaiser03,GromaGyorgyiKocsis06} are based on an upscaling argument starting from a system of discrete dislocations. None of these results are rigorous, however, since they build on uncontrolled approximations such as exchanging averaging with nonlinearity~\cite{GromaBalogh99} or postulating a closure relation in the BBGKY hierarchy~\cite{Groma97,GromaZaiserIspanovity16}.

\medskip
This brings us to the central question of this paper:
\begin{quote}
Can the Groma-Balogh equations be \emph{rigorously} derived from an underlying discrete-dislocation system?
\end{quote}

We will see that the answer is subtle, and depends on how one decides to deal with the singularity of the interaction potential. 

\subsection{Formal setup}

The mathematical results of this paper are formulated for two different settings: a $d$-dimensional torus with isotropic evolution (where $d=1,2$ are the ones inspired by screw dislocations), and a two-dimensional torus with slip-plane confinement modelling edge dislocations. We describe these two situations and their physical interpretation in Section~\ref{sec:setups}. At this stage, however, we  illustrate the results just for the two-dimensional case with isotropic evolution.
We work in a two-dimensional square torus $\T^2$, or equivalently a square in $\R^2$ with periodic boundary conditions. 

We model dislocations as points in $\T^2$; they represent defects in a linearly elastic continuum medium. 
Systems of dislocations have an associated energy, which is the elastic energy of the stress and strain fields in the elastic medium, generated by the defects and by external loads. Formally, the quadratic nature of the continuum elastic energy leads to an expression of the form~\cite{CermelliLeoni05,MoraPeletierScardia14TR}
\begin{align} \label{eqn:disc:energy:unregzd}
  \tilde E_n (x; b) 
  := \frac1{n^2} \sum_{i = 1}^n \sum_{\substack{ j = 1 \\ j \neq i } }^n b_i b_j V (x_i - x_j)
   + \frac1n \sum_{i=1}^n b_i U(x_i).
\end{align}
Here $x=(x_1,\dots,x_n)\in (\T^2)^n$ and $b=(b_1,\dots,b_n)\in \{-1,+1\}^n$ denote the position and the sign of each dislocation, $U$ is a smooth external potential, and $V(x-y)$ is a pairwise interaction potential which characterises the dependence of the elastic energy on the distance $x-y$ between two dislocations. The potential $V$, which will be defined in Section~\ref{sec:setups}, is related to the Green's function of the elasticity operator. Consequently, it has a logarithmic singularity at the origin with $V(0)=+\infty$. The logarithmic singularity is central to this paper, and we will deal with it in detail in Section~\ref{s:intro:Vreg} below. 
\smallskip

We now describe the main question of the paper. Assuming that the velocities of the dislocations satisfy an isotropic linear drag law (Orowan's relation), their evolution in $(\T^2)^n$ is of gradient-flow type, driven by the energy~\eqref{eqn:disc:energy:unregzd}, namely
\begin{equation} \label{for:evo:dyncs:edge:2D:unregzd}
     \frac{d x}{d t} (t) = - n \nabla \tilde E_n (x(t); b).
\end{equation}
By translating equation~\eqref{for:evo:dyncs:edge:2D:unregzd} into the language of measures, we find that the empirical measures of the positive and the negative dislocations,
\begin{equation}
\label{def:mu-plus-minus}
\mu_n^+ := \frac1n \sum_{\substack{i=1\\b_i = +1}}^n \delta_{x_i}, 
\qquad 
\mu_n^- := \frac1n \sum_{\substack{i=1\\b_i = -1}}^n \delta_{x_i}, 
\end{equation}
formally satisfy  the aforementioned \emph{Groma-Balogh equations}\footnote{%
This well-known argument runs as follows: for a test function $\varphi$, we have
\begin{align*}
\frac d{dt} \int_{\T^2} \varphi(x)\mu_n^+(dx) &= \frac d{dt} \frac1n \sum_{\substack{i=1\\b_i = +1}}^n \varphi(x_i) 
= -\frac1n \sum_{\substack{i=1\\b_i = +1}}^n \nabla\varphi(x_i) \biggl[\frac1n \sum_{j=1}^n b_ib_j\nabla V(x_i-x_j) + b_i \nabla U(x_i)\biggr]\\
&= -\int_{\T^2} \mu_n^+(dx) \nabla \varphi(x)\biggl[ \int_{\T^2} \nabla V(x-y) (\mu_n^+-\mu_n^-)(dy) + \nabla U(x)\biggr],
\end{align*}
which is a weak formulation of equation~\eqref{f:GBintro-1}.
}
\begin{subequations}
\label{f:GBintro}
\begin{align} 
\label{f:GBintro-1}
\partial_t \rho^+ 
&= \phantom{-}\div \big( \rho^+ ( \nabla V * (\rho^+ - \rho^-) + \nabla U ) \big),\\
\partial_t \rho^- 
&= - \div \big( \rho^- ( \nabla V * (\rho^+ - \rho^-) + \nabla U) \big).
\end{align}
\end{subequations}
This would suggest that as $n\to\infty$, if $\mu_n^\pm$ converge to limits $\rho^\pm$, then these limits should also be solutions of the same set of equations~\eqref{f:GBintro}.
The main aim of this paper is to investigate this convergence.

However, the formal arguments above cannot be made rigorous, because the singularity in $V$ results in existence of solutions to \eqref{for:evo:dyncs:edge:2D:unregzd} only up to the first time two particles of opposite sign collide. To resolve this blow-up, we replace \eqref{for:evo:dyncs:edge:2D:unregzd} by a regularised version, which we introduce next.

\subsection{Managing the singularity}
\label{s:intro:Vreg}

The singularity of $V$ has its physical origin in the simplification of replacing the crystallographic lattice by a continuum linearly elastic medium and modelling the dislocations as point defects. These point defects generate stress and strain fields with infinite elastic energy in any neighbourhood of the defect. The energy of a single dislocation or of a finite set of dislocations -- which in a genuine discrete model should be finite -- is therefore infinite\footnote{This is analogous to the fact that the fundamental solution $G$ of the Laplacian in two dimensions has a singularity at zero and has infinite Dirichlet integral $\int_O |\nabla G|^2 $ in any neighbourhood $O$ of zero, while by contrast the fundamental solution of the discrete Laplacian on a lattice is bounded. }.

This infinite energy makes many energy-based methods unsuitable, and several approaches and alternative models have been developed to circumvent this difficulty:
\begin{enumerate}
  \item the phase-field model developed by Peierls and Nabarro \cite{Peierls40, Nabarro47, KoslowskiCuitinoOrtiz02, GarroniMueller06, MonneauPatrizi12},
  \item the removal of small balls around the dislocations from the elastic medium \cite{CermelliLeoni05, GarroniLeoniPonsiglione10, MoraPeletierScardia14TR},
  \item \label{item:regularization-by-convolution} the smearing out of the dislocation core by a convolution kernel \cite{AlvarezCarliniHochLBouarMonneau05, CaiArsenlisWeinbergerBulatov06, GarroniLeoniPonsiglione10, ContiGarroniOrtiz},
  \item a cut-off radius within which dislocations  do not interact \cite{HirthLothe82}. 
\end{enumerate} 
The choice of the regularisation depends on factors such as: accuracy, computational convenience, the possibility to describe dynamics, well-posedness or the possibility to prove discrete-to-continuum convergence.

\medskip

In this paper we consider a broad class of regularisations that includes case~\eqref{item:regularization-by-convolution} above: we replace the potential $V$ by a smoothed, globally $W^{2,\infty}$ potential $V_\delta$, where $\delta>0$ is a parameter with the interpretation of the length scale of the regularisation (see assumptions~(V1)-(V4) in Section~\ref{S:setup}). In view of \eqref{f:GBintro}, we are interested in the limit $\delta \to 0$.

The regularised energy (denoted by $E_n$ without the tilde) is defined as~\eqref{eqn:disc:energy:unregzd}, with~$V$ replaced by $ V_{\delta}$ and with the diagonal kept in the sum:
\begin{align} \label{eqn:disc:energy}
  E_n (x; b) 
  := \frac1{2 n^2} \sum_{i = 1}^n \sum_{j = 1}^n b_i b_j V_{\delta} (x_i - x_j)
     + \frac1n \sum_{i=1}^n b_i U(x_i).
\end{align}
Similarly to~\eqref{for:evo:dyncs:edge:2D:unregzd} the dynamics are now given by 
\begin{equation} \label{for:evo:dyncs:edge:2D}
     \frac{d x}{d t} (t) = - n \nabla  E_n (x(t); b).
\end{equation}
Note that since $\nabla V_{\delta}$ is globally Lipschitz continuous, this evolution is well-defined. 
\smallskip

By translating~\eqref{for:evo:dyncs:edge:2D} into measures, we now find \emph{rigorously} that the empirical measures $\mu_n^\pm$ in \eqref{def:mu-plus-minus}, associated with a solution $x$ of~\eqref{for:evo:dyncs:edge:2D}, satisfy the \emph{regularised} Groma-Balogh equations
\begin{subequations}
\label{f:GBdel-intro}
\begin{align} 
\partial_t \rho^+ 
&= \phantom{-}\div \big( \rho^+ ( \nabla V_{\delta} * (\rho^+ - \rho^-) + \nabla U ) \big),\\
\partial_t \rho^- 
&= - \div \big( \rho^- ( \nabla V_{\delta} * (\rho^+ - \rho^-) + \nabla U) \big),
\end{align}
\end{subequations}
in the weak sense. These equations are well-defined in the sense of distributions on $(0,T)\times \T^2$ whenever $\rho^\pm$ are finite-mass measures (such as $\mu_n^\pm$). Indeed, since $\nabla V_{\delta}$ is uniformly continuous on $\T^2$, so is the convolution $\nabla V_{\delta}*(\rho^+-\rho^-)$, and therefore $\rho^\pm \nabla V_{\delta}*(\rho^+-\rho^-)$ is a finite-mass vector-valued measure on $\T^2$ for each $t$. This argument cannot be applied to the unregularised Groma-Balogh equations \eqref{f:GBintro}. There are, however, several ways to define a solution concept to these equations. We continue this discussion in \S \ref{s:intro:disc}.

\subsection{Results of this paper: Convergence and non-convergence}
We first describe our \emph{convergence} result. When the regularisation $\delta$ is \emph{fixed}, i.e., when we are dealing with a fixed, regular interaction kernel, the convergence of the discrete evolution equation~\eqref{for:evo:dyncs:edge:2D} to the Groma-Balogh equations~\eqref{f:GBdel-intro} as $n \to \infty$ is standard; the technique goes back at least to Dobrushin~\cite{Dobrushin79}. Here, however, we will consider the joint limit $n \to \infty$, $\delta_n \to 0$, to which the standard theory does not apply.

Our main convergence result states that when $\delta_n\to0$ sufficiently slowly, convergence to the unregularised Groma-Balogh equations \eqref{f:GBintro} holds. Theorem~\ref{t:evoConv} specifies this result for the $d$-dimensional torus, and Theorem~\ref{t:evoConv:conf} specifies it for the two-dimensional torus with slip-plane confinement.

The meaning of `how slowly' $\delta_n$ needs to converge to zero for the result to hold depends both on the chosen time horizon $T>0$, and on the sequence of discrete initial data for~\eqref{for:evo:dyncs:edge:2D} approximating the initial datum of \eqref{f:GBintro}. Conversely, we also specify a lower bound on $\delta_n$ such that our convergence result holds for a certain class of approximating initial data. In particular, if $\delta_n \gg n^{-1/d}$, where $n^{-1/d}$ is the typical distance between neighbouring dislocations, we recover the Groma-Balogh equations in the limit. Note that in this regime the short-range interactions are governed by the model of the dislocation core, whose size is described by $\delta_n$; the limiting Groma-Balogh equations, however, are independent of the description of the core. 

Our second main result is a counterexample, showing \emph{non-convergence} of the discrete evolutions to \eqref{f:GBintro} when $\delta_n\to 0$ \emph{fast enough}, for a suitable choice of initial conditions. The key idea is to choose as discrete initial data a configuration of short dipoles whose mutual distance is infinitesimal as $n\to \infty$, but much larger than the inter-dipole distance. In this case the discrete  evolutions converge to limit measures $\rho^\pm$ that are constant in space and time. This construction provides a counterexample to the discrete-to-continuum convergence whenever~$U$ is non-constant, since in that case, on the torus, spatially-constant measures cannot be stationary solutions of the Groma-Balogh equations~\eqref{f:GBintro}. 

\subsection{Comments and perspectives}
\label{s:intro:disc}

We comment on a number of aspects of this work and on some natural questions triggered by our results.

\smallskip

\emph{Conditions on the potential.} 
The conditions (V1)-(V4) that we impose below on the interaction potential $V$ cover many examples in materials science, such as vortices, Coulomb gases and dislocations. They are a mixture of fundamental conditions and conditions that we believe are mostly technical. The fundamental assumptions are the positivity of the Fourier transform, the singularity of $V$ being at most logarithmic, and the convergence of the approximations. The logarithmic singularity of $V$ is the strongest singularity under which the proof of the well-posedness of \eqref{f:GBintro} in \cite{CannoneEl-HajjMonneauRibaud10} (that we rely on in our proof) holds, and we are unaware of results that can deal with a singularity stronger than the logarithmic one. Positivity of the Fourier transform of the regularisation $V_\delta$ of the potential $V$ is needed in Lemma \ref{l:V:props}, where we establish a bound for the convolution with $V_\delta$ that is crucial in the proof of the evolutionary convergence. Moreover, for regularised potentials $V_\delta$ that do not have positive Fourier transform, the numerical simulations in van Meurs' thesis~\cite[Ch.~9]{VanMeurs15TH} show that \eqref{for:evo:dyncs:edge:2D} can have a fundamentally different behaviour.  
Note that positivity of the Fourier transform also guarantees convexity of $\kappa \mapsto \int V*\kappa \, d\kappa$, which for $\kappa = \rho^+-\rho^-$ is the interaction energy associated to \eqref{f:GBintro}. 

\smallskip

\emph{Regularisation and a-priori estimates.} There is a useful rule of thumb for the analysis of the properties of nonlinear PDEs, going back to the work of Jacques-Louis Lions and the French School of nonlinear PDE: `if the right a-priori estimates can be established, then one can regularise any way one wants---without changing the results'. In other words, the \emph{equation} determines the solutions, through the a-priori estimates, and regularisations can only approximate such solutions. Even in cases where uniqueness is not obvious, such as the incompressible Navier-Stokes equations, this rule of thumb applies.

This rule-of-thumb requires, however, that \emph{the approximations do not violate the a-priori estimates}. It is exactly this requirement that is not satisfied in this paper, and this is easy to recognize. The existence theorems, for slow $\delta_n\to0$ (Theorems~\ref{t:evoConv} and \ref{t:evoConv:conf}) build upon the a-priori estimates first identified in~\cite{CannoneEl-HajjMonneauRibaud10}. These estimates are meaningful for solutions with bounded entropy, i.e.,\ solutions $\rho^\pm$ for which $\int\rho^+\log\rho^+$ and $\int \rho^-\log \rho^-$ are bounded. Empirical measures do not satisfy this condition, and therefore a `regularisation' based on point measures, while physically meaningful, lies outside of this functional setting. 

We nonetheless make use of the entropy-based estimates by using the fact that the regularised evolution has a bounded expansion rate in the Wasserstein metric, with a bound that deteriorates as $\delta_n\to0$. By constructing an intermediate object (i.e., the unique solution of the $\delta_n$-regularised Groma-Balogh equations \eqref{f:GBdel-intro} with a continuum initial datum, which is both entropically bounded and Wasserstein-close to the empirical measures), we can extend the entropic framework a little outside the finite-entropy realm. The price to pay is that the Wasserstein bound need not deteriorate too fast, and this translates into the slowness criterion of $\delta_n\to0$.

\smallskip

\emph{Beyond regularising the dislocation core}. 
As an alternative to regularising the singular potential $V$, one can consider \emph{annihilation} of dislocations, where dislocations of opposite signs are taken out of the system when they collide or are sufficiently close. This evolution is studied in \cite{Serfaty07II,SmetsBethuelOrlandi07} for finite $n$, in \cite{Head72III,bilerKarchMonneau10,Ambrosio2011} on the continuum level, and in \cite{vanMeursMorandotti18ArXiv} a discrete-to-continuum convergence result is established. The main difference between the models listed above and the Groma-Balogh equations \eqref{f:GBintro} is in the reversibility/irreversibility of the annihilation. In \eqref{f:GBintro}, if a positive and a negative dislocation meet, they form a short dipole, but if a sufficiently large force is applied to the system, they can break apart. In other words, annihilation is `reversible'. In the other models, instead, annihilation is irreversible: once a short dipole forms, the two dislocations in it become permanently `invisible' in the evolution.

\smallskip

\emph{Mixed-approach proof of convergence.} Our proof of convergence is made in two steps, each requiring a different strategy. To estimate the distance between the empirical measures and the continuum solution of the $\delta_n$-regularised Groma-Balogh equations, we exploit the gradient-flow structure of the regularised equations of which they are both solutions (although with different initial data), and prove a Gronwall-type estimate in terms of Wasserstein distances. To estimate the distance between the intermediate measure and a solution of the unregularised Groma-Balogh equations we instead follow the approach in \cite{CannoneEl-HajjMonneauRibaud10}, based on entropy estimates.

\smallskip

\emph{Well-posedness for the Groma-Balogh equations \eqref{f:GBintro}.} Various existence results for the Groma-Balogh equations have already been proved~\cite{ElHajj07,ElHajj10,CannoneEl-HajjMonneauRibaud10,Mainini12a,LiMiaoXue14,WanChen16,WanChen17TR}, and other related multiple-sign systems have been studied in~\cite{ElHajjForcadel08,AmbrosioSerfaty08,Ambrosio2011}. A by-product of the convergence theorems in this paper provides additional existence results, which go beyond those cited above in allowing for more general interaction potentials.

The questions of uniqueness and stability appear to depend strongly on the regularity of the initial data. Mainini~\cite{Mainini12a} proves uniqueness for a similar system in two dimensions with different boundary conditions, under the assumption that the solutions are in $L^\infty$, by using the log-Lipschitz continuity of $\nabla V*(\rho^+-\rho^-)$.  Li, Miao, and Xue \cite{LiMiaoXue14} prove local-in-time uniqueness in $H^m\cap L^p$, $m>2$ and $p\in (1,2)$, and show that a finite existence time in these spaces implies blow-up in $L^\infty$.

\smallskip

\emph{Non-convergence}. Our results fit into the setting of evolutionary convergence of multiple species interacting via singular potentials, e.g., \cite{ChapmanXiangZhu15}, \cite[Ch.~9]{VanMeurs15TH}, \cite{vanMeurs17}. On the other hand, our counterexample shows that the discrete system does not always converge to the expected limiting equation, or worse, may not converge at all. This sparks questions for future research such as:
\begin{itemize}
 \item[-] is there an alternative notion of evolutionary convergence (e.g., statistical mechanics, convergence in probability on random initial data, addition of noise), weaker than the one in Theorem \ref{t:evoConv}, for which convergence can be proven?
 \item[-] are there microscopic details invisible on the macroscale (such as the density of dipoles) which can affect the macroscopic behaviour, rendering the question of the discrete-to-continuum convergence much more subtle?
\end{itemize}

\medskip

The organisation of the paper is as follows. In Section \ref{sec:setups} we give a precise description of \eqref{for:evo:dyncs:edge:2D}, \eqref{f:GBdel-intro} and \eqref{f:GBintro} for both the isotropic and slip-plane confined evolutions, a detailed description of the assumptions and related properties of the interaction potential $V$ and its regularisation $V_\delta$, and further preliminaries. In Section \ref{sec:convergence} we state and prove our main convergence results (Theorem \ref{t:evoConv} and Theorem \ref{t:evoConv:conf}). Our second main result, the class of counterexamples for convergence to the Groma-Balogh equations, is detailed in Section \ref{section:dipole}. The Appendix \ref{sect:Orlicz} recalls the definitions and elementary properties of several Orlicz spaces.


\section{Detailed formulation of the problems and preliminary results}
\label{sec:setups}

\noindent
Before describing the isotropic and anisotropic cases in detail, we first introduce the general setup in dimension $d\geq 1$, and prove some preliminary results that apply to both cases.

\subsection{Notation}

\noindent

\begin{minipage}{16cm}
\newcommand{\specialcell}[2][c]{%
  \begin{tabular}[#1]{@{}l@{}}#2\end{tabular}}
\begin{small}
\bigskip
\begin{tabular}{lll}
$\mathcal M(\Omega)$, $\mathcal M_+(\Omega)$ & signed and non-negative finite Borel measures on a generic set~$\Omega$;\\
$m\P(\Omega)$, $m>0$ & non-negative Borel measures of mass $m$ on a generic set~$\Omega$\\
&  (see also p.~\pageref{remark:notationP} for the notation $\P (\T^d \times \{\pm1\} )$);\\
$\T^d$, $d\in \N$ & $d$-dimensional open flat torus, $\T^d = \R^d/\Z^d$;\\
$\widehat f_k$, $k\in \Z^d$ & \specialcell[t]{Fourier coefficients of $f\in L^2(\T^d)$ ($\widehat f =(\widehat f_k)_k \in \ell^2(\Z^d)$), \\ 
$\widehat f_k := \int_{\mathbb T^d} e^{-2 \pi i k \cdot x} f(x) \, dx$;}\\
$\mathcal{F}^{-1}(g)$ & \specialcell[t]{inverse Fourier transform of $g=(g_k)_k \in \ell^2(\Z^d)$,\\
\smallskip
$\mathcal{F}^{-1}(g)(x) := \sum_{k \in \Z^d} g_k e^{2 \pi i k \cdot x}$;}\\
\smallskip
$ \| g\|_{H^\ell (\mathbb T^d)}$, $ \ell \in \R$ & \specialcell[t]{norm of $g\in H^\ell (\mathbb T^d)$,\\
$ \|g\|^2_{H^\ell (\mathbb T^d)}:= \sum_{k \in \Z^d} (1 + |k|^2)^\ell \big| \widehat g_k \big|^2$;}\\
$W(\mu,\nu)$ & \specialcell[t]{$2$-Wasserstein distance between $\mu,\nu \in\mathcal M_+ (\Omega)$ \\
(set $=+\infty$ if the measures have different masses);}\\
$\| f \|_{\textrm{BL}}$ & \specialcell[t]{norm on bounded Lipschitz functions $f:\Omega\to \R$,\\
$\| f \|_{\textrm{BL}} = \|f\|_\infty + |f |_{\textrm{Lip}}$, and $|f |_{\textrm{Lip}}$ the Lipschitz constant of $f$;}\\
$\| \mu \|_{\textrm{BL}}^*$ & \specialcell[t]{dual bounded Lipschitz norm of $\mu \in \mathcal M(\Omega)$,\\
$\| \mu \|_{\textrm{BL}}^* = \sup\big\{|\int_\Omega f d\mu|: \|f\|_{\textrm{BL}}\leq 1\big\}$;}\\
$\Ent (\mu)$  & entropy of $\mu \in \mathcal M_+ (\Omega)$, $\Ent (\mu):=\int_{\Omega} \mu \log \mu$;\\
$\Exp_\alpha (\mathbb T^d)$, $L\log^\beta L(\T^d)$ & Orlicz spaces on the flat torus (see Appendix \ref{sect:Orlicz}).\\
\end{tabular}
\end{small}
\end{minipage}

\subsection{General setup}\label{S:setup}
We consider points $x_i\in \T^d$ with fixed signs $b_i\in \{-1,+1\}$ for $i=1,\dots,n$ that evolve according to \eqref{for:evo:dyncs:edge:2D}.
We assume the following conditions on the related potentials $V$, $V_\delta$, and $U$: 
\smallskip

\begin{itemize}
\item[(V1)] $\widehat V_k \geq 0, \,\,  [{\widehat V_\delta}]_k \geq 0 \,\, \forall \,  k\in \Z^d\setminus \{0\}$;
\smallskip
\item[(V2)] $\sup_{k\in \Z^d\setminus \{0\}}(1+|k|^2)^\frac{d}2 \big(\widehat V_k\vee [{\widehat V_\delta}]_k\big) < \infty$ uniformly in $\delta$;
\smallskip
\item[(V3)] $V_\delta \to V$ in $\mathcal D' (\mathbb T^d)$ and $V_\delta \ast \phi \to V\ast \phi$ strongly in $\Exp(\T^d)$ for every $\phi \in C^1(\T^d)$, as $\delta \to 0$;
\smallskip
\item[(V4)] $V_\delta \in W^{2,\infty}(\T^d)$ with $V_\delta(x) = V_\delta(-x)$;
\smallskip
\item[(U)] $U\in C_b^\infty(\T^d)$.
\end{itemize}

We refer to Appendix~\ref{sect:Orlicz} for the definition of the Orlicz space $\Exp (\T^d)$.

\begin{rem}
In the two-dimensional case, a possible choice for $V$ is the Green's function on the flat torus $\T^2$. Its Fourier coefficients are given by
\begin{align*}
\widehat V_k = \begin{cases}
\frac\alpha{|k|^2} & k\in \Z^2\setminus\{0\}\\
0 & k=0
\end{cases} \qquad \text{for }\alpha>0.
\end{align*}
A possible choice for its regularisation $V_\delta$ is
\begin{align*}
[\widehat V_\delta]_k &= \begin{cases}
\frac\alpha {|k|^2} e^{-\delta |k|} & k\in \Z^2\setminus\{0\}\\
0 & k=0.
\end{cases}
\end{align*}
\end{rem}

\noindent
Given a point $x =(x_1,\dots,x_n)\in (\mathbb T^d)^n$ and the associated sign $b =(b_1,\dots,b_n)\in \{\pm 1\}^n$, the discrete, regularised energy of the system is 
\begin{align} \label{eqn:disc:energy-repeated}
  E_n (x; b) 
  := \frac1{2 n^2} \sum_{i = 1}^n \sum_{j = 1}^n b_i b_j V_{\delta} (x_i - x_j)
     + \frac1n \sum_{i=1}^n b_i U(x_i),
\end{align}
where $\delta> 0$ (see also~\eqref{eqn:disc:energy}). For what follows it is convenient to express the particle energy \eqref{eqn:disc:energy-repeated} in terms of the empirical measures associated to $x$ and $b$. 

We recall the definition of empirical measures (see~\eqref{def:mu-plus-minus})
\begin{equation} \label{fd:mun}
\begin{aligned}
   \mu^+_n &:= \frac1n \sum_{\substack{i=1\\b_i = +1}}^n \delta_{x_i} \in \mathcal M_+ (\T^d),
   &\mu^-_n &:= \frac1n \sum_{\substack{i=1\\b_i = -1}}^n \delta_{x_i} \in \mathcal M_+ (\T^d), \\
   \bmu_n &:= (\mu^+_n, \mu^-_n) \in (\mathcal M_+ (\T^d))^2,
   &\kappa_n &:= \frac1n \sum_{i=1}^n b_i \delta_{x_i} = \mu_n^+ - \mu_n^- \in \mathcal M (\T^d).
\end{aligned}
\end{equation}
Note that there is an obvious isomorphism between $(x,b)$ and $\bmu_n$, modulo relabelling the particles. 
\label{remark:notationP}
Throughout the paper we will consider the pairs of measures $\bmu=(\mu^+,\mu^-) \in (\mathcal M_+ (\T^d))^2$ such that $\mu^++\mu^- \in \P(\T^d)$. With a slight abuse of notation we will denote this class with $\P (\T^d \times \{\pm1\} )$ (implicitly identifying the measure $\bmu$ with a measure $\mu^++\mu^-\in \P (\T^d \times \{\pm1\} )$, with $\supp \mu^+\subseteq \T^d \times \{+1\}$ and  $\supp \mu^-\subseteq \T^d \times \{-1\}$).

\medskip

We define a continuum, $\delta$-regularised energy $E_\delta$, that extends \eqref{eqn:disc:energy-repeated} to the whole space of probability measures $\P (\T^d \times \{\pm1\})$, as
\begin{equation*}
E_\delta(\bmu):= \frac12\int_{\T^d} (V_\delta\ast \kappa )(x) d \kappa (x) + \int_{\T^d} U(x) d \kappa (x),
\end{equation*}
where, in analogy with \eqref{fd:mun}, $\kappa:= \mu^+-\mu^-$. Since the energy $E_n$ is invariant under relabelling the particles, we have $E_\delta(\bmu_n) = E_n(x;b)$, where $\bmu_n$ is the empirical measures associated to $x$ and $b$.

\medskip

We also recall the definition of entropy: for any $\brho=(\rho^+,\rho^-) \in \P(\T^d\times \{\pm1\})$ with $\rho^\pm \ll dx$, 
\begin{equation}\label{def:entropy}
\Ent(\brho) := \Ent(\rho^+) + \Ent (\rho^-) := \int_{\T^d} \rho^+(x)\log \rho^+(x) dx + \int_{\T^d} \rho^-(x)\log \rho^-(x) dx.
\end{equation}
With a little abuse of notation, here, and in what follows, we denote by $\rho^+$ (resp.~$\rho^-$) a positive measure and its density with respect to the Lebesgue measure.

\medskip

Finally we introduce the Wasserstein distance between measures in $\mathcal{P}(\T^d\times\{\pm 1\})$.
Let $\bmu, \bnu \in \mathcal{P}(\T^d\times\{\pm 1\})$. We define the (square of the) 2-Wasserstein distance between $\bmu$ and $\bnu$ as 
\begin{align} \label{bW2}
\bW^2 \big( \bmu, \bnu \big) := \inf_{\gamma \in \Gamma(\bmu,\bnu)} \int_{(\T^d\times\{\pm1\})^2} d^2(x',y') d\gamma(x',y'),
\end{align}
where,  for $x', \ y' \in \T^d\times\R$, with  $x'=(x,a)$ and $y'=(y,b)$, 
\begin{equation}\label{d2}
d^2(x',y'):= \|x-y\|_{\Td}^2 + |a-b|^2.
\end{equation}
Here $\|\cdot \|_{\Td}$ (denoted with $|\cdot|_{\TT}$ in the case $d=1$) is the induced metric on the manifold $\T^d$,
\begin{equation}\label{norm:T}
\|x-y\|_{\Td} = \min_{k\in \Z^d}\left\{ \|x-y +k\| \right\},
\end{equation}
and $\Gamma(\bmu,\bnu)$ is the set of couplings of $\bmu$ and $\bnu$, namely
\begin{align*}
\Gamma(\bmu,\bnu) := &\Big\{\gamma\in \P\big((\T^d\times\{\pm1\})^2\big): \gamma(A\times (\T^d\times\{\pm1\})) = \bmu(A), \\
  & \quad \gamma((\T^d\times\{\pm1\})\times A) = \bnu(A)\text{ for all Borel sets }A\subset \T^d\times\{\pm1\} \Big\}.
\end{align*}
As usual, we denote with $\Gamma_\circ (\bmu,\bnu) \subseteq \Gamma(\bmu,\bnu)$ the set of \emph{optimal transport plans} $\gamma$ for \eqref{bW2}. Note that $\Gamma_\circ (\bmu,\bnu) \neq \emptyset$ (see, e.g., \cite[Theorem 1.4]{Santambrogio-book}).

\smallskip

In the special case of $\mu^\pm (\T^d) = \nu^\pm (\T^d)$, $\bW$ enjoys some additional properties summarised in the next proposition. 

\begin{prop}[Properties of $\bW$] \label{p:bW}
Let $\bmu=(\mu^+,\mu^-), \bnu=(\nu^+,\nu^-) \in \mathcal{P}(\T^d\times\{\pm 1\})$ be such that $\mu^\pm (\T^d) = \nu^\pm (\T^d)$. Then
\begin{enumerate}[label=(\roman{*})]
  \item \label{p:bW:sum} $\bW^2 \big( \bmu, \bnu \big) = W^2 \big( \mu^+, \nu^+ \big) + W^2 \big( \mu^-, \nu^- \big)$, where $W$ is the standard 2-Wasserstein distance on $\mathcal{M}_+(\T^d)$ with cost $\| \cdot \|^2_{\Td}$;
  \item \label{p:bW:gamma} There exist $\gamma \in \Gamma_\circ (\bmu,\bnu)$ and $\gamma^\pm \in \Gamma_\circ (\mu^\pm,\nu^\pm)$ such that $\gamma = (\gamma^+, \gamma^-)$, where $\Gamma_\circ (\mu^\pm,\nu^\pm)$ denotes the set of optimal transport plans for $W$.
\end{enumerate}
\end{prop}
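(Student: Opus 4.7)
My plan is to exploit the product structure of the cost $d^2$ in \eqref{d2}: its sign component $|a-b|^2$ only takes the values $0$ (matching signs) and $4$ (opposite signs), while its spatial component $\|x-y\|_{\Td}^2$ is bounded by the torus diameter squared $d/4$. Under the mass-matching hypothesis $\mu^\pm(\T^d) = \nu^\pm(\T^d)$, transporting mass across signs is avoidable, and in the dimensions relevant to this paper ($d \leq 2$) it is strictly suboptimal; this reduces $\bW^2(\bmu,\bnu)$ to a sum of two standard Wasserstein problems.

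For the upper bound in (i), I would pick $\gamma^\pm \in \Gamma_\circ(\mu^\pm, \nu^\pm)$ for the standard $W$ on $\T^d$ and lift each canonically to a measure on $(\T^d \times \{\pm 1\})^2$ concentrated on the sign-diagonal subsets $(\T^d \times \{+1\})^2$ and $(\T^d \times \{-1\})^2$. Setting $\gamma := \gamma^+ + \gamma^-$, a direct check of the marginals yields $\gamma \in \Gamma(\bmu,\bnu)$. Since $|a-b|^2$ vanishes on $\supp\gamma$, its cost equals $W^2(\mu^+,\nu^+) + W^2(\mu^-,\nu^-)$, which bounds $\bW^2$ from above.

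The harder lower bound is the main obstacle. For an arbitrary $\gamma \in \Gamma(\bmu,\bnu)$, I would decompose $\gamma = \gamma^{++} + \gamma^{+-} + \gamma^{-+} + \gamma^{--}$ according to the four sign pairs; the marginal constraints together with the mass-matching hypothesis force $|\gamma^{+-}| = |\gamma^{-+}| =: m$. I would then construct modified plans $\tilde\gamma^\pm \in \Gamma(\mu^\pm, \nu^\pm)$ by keeping the diagonal pieces $\gamma^{\pm\pm}$ and rerouting the cross-sign pieces using any coupling (e.g.\ the product coupling) $\lambda^\pm$ between the spatial submarginals $(\alpha^\pm,\beta^\pm)$ of mass $m$ inherited from $\gamma^{+-},\gamma^{-+}$. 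Since $\|x-y\|_{\Td}^2 \leq \diam(\T^d)^2 = d/4$ pointwise, the spatial cost of $\lambda^+ + \lambda^-$ is at most $md/2$, while eliminating the cross-sign transport in $\gamma$ saves the sign cost $8m$. A short bookkeeping gives
\begin{equation*}
\int \|x-y\|_{\Td}^2 \, d(\tilde\gamma^+ + \tilde\gamma^-) \leq \int d^2 \, d\gamma + m\Big(\tfrac{d}{2} - 8\Big),
\end{equation*}
and for $d \leq 2$ (in fact any $d \leq 16$) the correction is non-positive. Combined with $\int \|x-y\|_{\Td}^2 \, d\tilde\gamma^\pm \geq W^2(\mu^\pm, \nu^\pm)$, this yields $\int d^2 \, d\gamma \geq W^2(\mu^+,\nu^+) + W^2(\mu^-,\nu^-)$ and hence the reverse inequality for $\bW^2$.

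For part (ii), the coupling $\gamma = \gamma^+ + \gamma^-$ constructed in the upper bound attains $\bW^2(\bmu,\bnu)$ by the identity just established, so $\gamma \in \Gamma_\circ(\bmu,\bnu)$; its two sign-diagonal pieces identify with the original $\gamma^\pm \in \Gamma_\circ(\mu^\pm,\nu^\pm)$, delivering the decomposition $\gamma = (\gamma^+, \gamma^-)$ required by the statement. The delicate point throughout is the dimension-dependent bookkeeping in the swap step: the diameter bound on $\T^d$ is what makes rerouting across signs more expensive than the saved sign cost, and this is precisely what fails in the high-dimensional regime the paper does not need.
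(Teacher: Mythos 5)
Your proposal is correct and follows essentially the same route as the paper's (much terser) proof: the upper bound by restricting to sign-diagonal plans, and the lower bound by the swap argument showing that rerouting mass within each sign costs at most $\diam(\T^d)^2$ per unit while crossing signs costs $4$, which is exactly the paper's remark that ``it is more convenient to redistribute mass within $\T^d\times\{1\}$ and $\T^d\times\{-1\}$\dots since $\diam(\T^d)<2$''. Your explicit bookkeeping, including the observation that the mass-matching hypothesis forces $|\gamma^{+-}|=|\gamma^{-+}|$ and that the argument is dimension-dependent (valid for $d\le 16$), simply makes the paper's sketch rigorous.
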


\begin{proof}
To prove \ref{p:bW:sum}, just note that the inequality $\bW^2 \big( \bmu, \bnu \big) \leq W^2 \big( \mu^+, \nu^+ \big) + W^2 \big( \mu^-, \nu^- \big)$ is trivial, since the infimum in the definition of $\bW$ is computed on a larger set. The opposite inequality follows by observing that it is more convenient to redistribute mass within $\T^d\times \{1\}$ and $\T^d\times \{-1\}$ rather than moving mass between the two, since $\diam(\T^d)<2$. Property \ref{p:bW:gamma} follows immediately from \ref{p:bW:sum}.
\end{proof}

Next we extend \cite[Theorem 8.4.7]{AmbrosioGigliSavare08} to Wasserstein spaces on the torus.

\begin{lem}[Derivative of $W$ along curves in $m \mathcal{P}(\T^d)$] \label{lem:dWdt}
Let $m>0$ and let $v \in C([0,T] \times \T^d; \R^d)$ be a vector field such that $x\mapsto v(t,x)$ is Lipschitz continuous in $\T^d$ uniformly in $t$. Let $\mu : (0,T) \to m \mathcal{P}(\T^d)$ be any curve satisfying
\begin{equation}\label{evo:muv}
  \partial_t \mu + \div (\mu v)=0
  \quad \text{in } \mathcal D'((0,T) \times \T^d).
\end{equation}
Then for every $\sigma \in m\mathcal{P}(\T^d)$ it holds that
\begin{equation*}
  \frac{d}{dt} W^2(\mu (t), \sigma) = 2\int_{\T^d \times \T^d} \big( x - y + k(x,y) \big) \cdot v(t, x) d\gamma(x,y)
  \quad \text{for every } 0 < t < T, 
\end{equation*}
where $k(x,y)\in \Z^d$ is such that $\|x-y\|_{\Td} = \|x-y +k(x,y)\|$, and $\gamma \in \Gamma_\circ (\mu (t), \sigma)$.
\end{lem}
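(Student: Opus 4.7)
The proof adapts that of \cite[Theorem 8.4.7]{AmbrosioGigliSavare08} to the torus, the only novelty being to track the integer shift $k(x,y) \in \Z^d$ arising from the intrinsic distance $\|\cdot\|_{\Td}$. The strategy is to compare $W^2(\mu(t+h), \sigma)$ and $W^2(\mu(t), \sigma)$ for $h$ small, using non-optimal transport plans obtained by pushing forward an optimal plan along the characteristic flow of $v$.

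Since $v$ is Lipschitz in $x$ uniformly in $t$ and continuous in $t$, the ODE $\partial_\tau X(\tau; s, x_0) = v(\tau, X(\tau; s, x_0))$ with $X(s;s,x_0) = x_0$ admits a unique $C^1$ flow $X(\tau; s, \cdot)$, and by uniqueness of solutions of \eqref{evo:muv} with such $v$ one has $\mu(t+h) = X(t+h; t, \cdot)_\# \mu(t)$. Fix $t \in (0,T)$, pick $\gamma \in \Gamma_\circ(\mu(t), \sigma)$, and let $k : \Td \times \Td \to \Z^d$ be a Borel measurable selection of the argmin in \eqref{norm:T}. The push-forward $(X(t+h;t,\cdot), \mathrm{id})_\# \gamma$ belongs to $\Gamma(\mu(t+h), \sigma)$, and since $\|\cdot\|_{\Td} \le \|\cdot + k\|$ for every $k\in \Z^d$,
\begin{equation*}
W^2(\mu(t+h), \sigma) \le \int_{\Td \times \Td} \|X(t+h;t,x) - y + k(x,y)\|^2 \, d\gamma(x,y).
\end{equation*}
Since $v$ is bounded Lipschitz and continuous in $t$, $X(t+h;t,x) = x + h\,v(t,x) + o(h)$ uniformly in $x \in \Td$. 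Expanding the square and using $W^2(\mu(t), \sigma) = \int \|x - y + k(x,y)\|^2 \, d\gamma$ (which holds because $k$ realises the argmin $\gamma$-a.e.) yields
\begin{equation*}
\limsup_{h \to 0^+} \frac{W^2(\mu(t+h), \sigma) - W^2(\mu(t), \sigma)}{h} \le 2 \int (x - y + k(x,y)) \cdot v(t, x) \, d\gamma(x,y).
\end{equation*}

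The matching liminf bound is obtained by the symmetric construction: for each $h$, choose $\gamma_h \in \Gamma_\circ(\mu(t+h), \sigma)$ and a selection $k_h$, push $\gamma_h$ back via $X(t; t+h, \cdot)$, and Taylor-expand using $X(t; t+h, x') = x' - h\,v(t, x') + o(h)$. To pass $h \to 0^+$ one invokes the weak-$*$ stability of optimal transport: any weak-$*$ limit of $\gamma_h$ as $h \to 0$ lies in $\Gamma_\circ(\mu(t), \sigma)$. The same argument for $h \to 0^-$ (running the flow backward) yields the two-sided derivative and concludes the proof.

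The main technical subtlety is the non-smoothness of $\|\cdot\|_{\Td}^2$ on the cut locus: any Borel selection $k$ of the argmin is only piecewise constant in $(x,y)$, and for general (possibly singular) $\mu(t)$ one cannot a priori rule out $\gamma$-mass on pairs where the argmin is non-unique. The saving observation is that the upper-bound inequality $\|\cdot\|_{\Td} \le \|\cdot + k\|$ holds for \emph{any} measurable $k$, so no pointwise uniqueness is needed in the upper-bound step. For the liminf step, uniqueness of the derivative $(d/dt) W^2$ forces the integral on the right-hand side to be independent of the particular $\gamma \in \Gamma_\circ(\mu(t), \sigma)$, which is exactly what makes the weak-$*$ stability argument identify the correct limit regardless of the chosen $k_h$.
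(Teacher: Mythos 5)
Your upper-bound half is exactly the paper's argument: push a fixed optimal plan $\gamma\in\Gamma_\circ(\mu(t),\sigma)$ forward along the flow, replace $\|\cdot\|_{\Td}$ by the (suboptimal for the perturbed pair, exact for $(x,y)$) Euclidean representative $\|\cdot+k(x,y)\|$, and Taylor-expand. The divergence, and the gap, is in your liminf step. There you pull back optimal plans $\gamma_h\in\Gamma_\circ(\mu(t+h),\sigma)$ and invoke weak-$*$ stability, but the quantity you must pass to the limit is $\int (x'-y+k_h(x',y))\cdot v(t,x')\,d\gamma_h$, whose integrand contains the measurable selection $k_h$ and is therefore genuinely discontinuous across the cut locus. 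Weak-$*$ convergence $\gamma_h\weakto\gamma_0$ gives no control over such integrals (the plans may charge the cut locus, and $k_h$ varies with $h$), and this is precisely the torus-specific difficulty that does not arise in \cite[Thm.~8.4.7]{AmbrosioGigliSavare08}, where the cost $|x-y|^2$ is smooth. Your proposed repair --- that uniqueness of the derivative forces the right-hand side to be independent of the optimal plan, which then ``identifies the correct limit'' --- is circular: the existence of the derivative and the plan-independence of the formula are the conclusion of the lemma, not available tools; and even granted, plan-independence of the limiting integral would not justify the convergence $\int f_h\,d\gamma_h\to\int f_0\,d\gamma_0$ for discontinuous $f_h$.

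The paper avoids this entirely with a simpler device: the forward construction is carried out for $h$ of \emph{either} sign (the flow $T_h$ is defined for $h<0$ as well, since $v$ is Lipschitz, and $\mu(t+h)=(T_h)_\#\mu(t)$ still holds), yielding the single inequality
$W^2(\mu(t+h),\sigma)-W^2(\mu(t),\sigma)\le 2h\int (x-y+k(x,y))\cdot v(t,x)\,d\gamma + o(|h|)$
with the \emph{same} fixed $\gamma$ for all small $|h|$. Dividing by $h>0$ gives the upper bound on the right difference quotient; dividing by $h<0$ reverses the inequality and gives the matching lower bound. No second family of plans, no stability argument, and no cut-locus issue ever enters. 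I would recommend replacing your liminf step by this sign-flip observation.
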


\begin{proof}
Let $0<t<T$ be fixed, let $\mu$ be a given solution of \eqref{evo:muv}, and let $h\in \R$ be such that $0 < t+h < T$. We prove that
\begin{equation} \label{p:dWdt:est}
  W^2(\mu (t + h), \sigma) - W^2(\mu (t), \sigma) 
  \leq 2 h \int_{\T^d \times \T^d} \big( x - y + k(x,y) \big) \cdot v(t, x) d\gamma(x,y) + o(|h|)
\end{equation}
as $h \to 0$, where $k(x,y)\in \Z^d$ is as in \eqref{norm:T} with respect to $x$ and $y$. The claim then follows by dividing \eqref{p:dWdt:est} by $h$ and letting $h\to 0$, for both $h>0$ and $h<0$.

For $0 < s < T-t$, let $T_s$ be the map which to any $x \in \T^d$ associates $T_s x = y(t + s)$, where $y$ is the solution of
\begin{equation*} 
\left\{ \begin{aligned}
   y'(s) &= v(s, y(s))
  && 0 < s < T-t, \\
  y(t) &= x.
  &&
\end{aligned} \right.
\end{equation*}
We note that $\mu(t+h) = (T_{h})_\# (\mu(t))$, and since $v$ is continuous in both variables, 
\begin{equation} \label{p:Thx:Tay}
  T_h x = x + h v(t,x) + o(|h|).
\end{equation}

We now prove \eqref{p:dWdt:est}. Let $\gamma \in \Gamma_\circ ( \mu (t), \sigma )$ and set $\gamma_h := (T_h \times \text{id})_\# \gamma \in \Gamma ( \mu (t+h), \sigma )$. Then
\begin{align*}
  W^2(\mu (t + h), \sigma) - W^2(\mu (t), \sigma) 
  &\leq \int_{\T^d \times \T^d} \| x - y \|_{\T^d}^2 d (\gamma_h - \gamma) (x,y) \\
  &= \int_{\T^d \times \T^d} \big( \| T_h x - y \|_{\T^d}^2 - \| x - y \|_{\T^d}^2 \big) d \gamma (x,y) \\
  &\leq \int_{\T^d \times \T^d} \big( \| T_h x - y + k(x,y) \|^2 - \| x - y + k(x,y)\|^2 \big) d \gamma (x,y).
\end{align*}
By expanding the squares of the Euclidean distances and substituting \eqref{p:Thx:Tay} in the estimate above, we get the claim \eqref{p:dWdt:est}.
\end{proof}

We now state an approximation result that will be used in the construction of approximate initial data for the evolution. For the definition of the Orlicz space $L \log L (\mathbb T^d)$ we refer to Appendix~\ref{sect:Orlicz}.

\begin{lem}[Approximation in $\bW$]\label{lemm:approx}
Let $\brho=(\rho^+,\rho^-) \in \big(L \log L (\mathbb T^d)\big)^2\cap \mathcal{P}(\T^d\times\{\pm1\})$. There exists a sequences $\brho_n=(\rho^+_n,\rho^-_n) \in \big(L \log L (\mathbb T^d)\big)^2\cap \mathcal{P}(\T^d\times\{\pm1\})$ with $n\rho^{\pm}_n(\T^d)\in \mathbb{N}$ and $\Ent(\brho_n) \leq c$, uniformly in $n$, such that
\begin{equation*}
\bW^2(\brho, \brho_n) \leq \frac{C}{n}.
\end{equation*}
Moreover, there exist $(x^n,b^n)$ in $\T^d\times \{\pm1\}$ such that the corresponding empirical measures $\bmu_n=(\mu^+_n,\mu^-_n) \in \mathcal{P}(\T^d\times\{\pm1\})$, defined as in \eqref{fd:mun}, satisfy
\begin{equation*}
W^2(\rho^\pm_n, \mu^\pm_n) \leq \frac C{n^{1/d}}.
\end{equation*}
\end{lem}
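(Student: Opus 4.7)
The plan is to build $\brho_n$ in two steps: first correct the masses of the two components to be multiples of $1/n$ by transferring a small amount of mass between $\rho^+$ and $\rho^-$, then discretise each corrected component into an empirical measure. Each step is responsible for one of the two claimed inequalities.

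\emph{Step 1: mass correction.} Let $m^\pm := \rho^\pm(\T^d)$. Pick $m_n^+ := \lfloor nm^+\rfloor/n$ and $m_n^- := 1 - m_n^+$, so that $nm_n^\pm \in \N$ and $|m_n^\pm - m^\pm| \leq 1/n$. Assuming $m^+ > 0$ (boundary cases being immediate), set
\[
  \rho_n^+ := \frac{m_n^+}{m^+}\,\rho^+, \qquad
  \rho_n^- := \rho^- + \frac{m^+-m_n^+}{m^+}\,\rho^+.
\]
Both $\rho_n^\pm$ are nonnegative linear combinations of $\rho^\pm$ with coefficients in $[0,1]$, so they lie in $L \log L(\T^d)$ with norms bounded by $\|\rho^+\|_{L \log L}+\|\rho^-\|_{L \log L}$, yielding a uniform bound on $\Ent(\brho_n)$ via the Orlicz-entropy comparison. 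For $\bW$, I use the coupling $\gamma \in \Gamma(\brho, \brho_n)$ which is the identity on the $-$-sheet and on the $(m_n^+/m^+)\rho^+$ portion of the $+$-sheet, and which transfers the residual $((m^+-m_n^+)/m^+)\rho^+$ from $\T^d\times\{+1\}$ to $\T^d\times\{-1\}$ keeping the $\T^d$-coordinate fixed. Only the sign flip contributes to the cost, so
\[
  \bW^2(\brho, \brho_n) \leq 4(m^+-m_n^+) \leq 4/n.
\]

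\emph{Step 2: empirical approximation.} Fix the sign $+$ and set $k := nm_n^+ \in \N$. Since $\rho_n^+$ is absolutely continuous, classical quantisation theory for Borel probability measures on bounded subsets of $\R^d$ (or, equivalently, a probabilistic argument based on $k$ i.i.d.\ samples from $\rho_n^+/m_n^+$ combined with the Fournier--Guillin bound on the expected Wasserstein error of the empirical measure) produces points $x_1,\ldots,x_k \in \T^d$ and a partition $\{A_1,\ldots,A_k\}$ of $\T^d$ with $\rho_n^+(A_i) = 1/n$, $x_i \in A_i$ and $\mathrm{diam}(A_i) \leq Ck^{-1/d}$. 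Setting $\mu_n^+ := \frac{1}{n}\sum_{i=1}^k \delta_{x_i}$ and using the transport plan that sends $\rho_n^+|_{A_i}$ onto $\frac{1}{n}\delta_{x_i}$ gives
\[
  W^2(\rho_n^+, \mu_n^+) \leq \sum_{i=1}^k \rho_n^+(A_i)\,\mathrm{diam}(A_i)^2 \leq \frac{C\,k^{1-2/d}}{n} \leq \frac{C'}{n^{1/d}},
\]
where the last inequality uses $k \leq n$ when $d \geq 2$ and $k \geq 1$ when $d = 1$. Applying the same construction to $\rho_n^-$ and tagging each point with its sign yields the pair $(x^n, b^n)$ on $\T^d \times \{\pm 1\}$.

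\emph{Main obstacle.} The routine part is Step 1, where the only subtlety is the propagation of the $L \log L$ bound through the convex combination. The delicate input is the existence of an equal-mass partition with diameter $O(k^{-1/d})$ in Step 2: for absolutely continuous densities that need not be bounded in $L^\infty$ this cannot be obtained by a naive cyclic-bisection scheme, because bisecting a cell at the mass-median along a coordinate does not control its width. This is the point at which I would lean on classical quantisation estimates (which directly give the optimal rate $n^{-1/d}$ for any Borel measure on a bounded domain) or on the probabilistic Fournier--Guillin-type bound; either approach is more than sufficient since the lemma only demands the weaker exponent $1/d$.
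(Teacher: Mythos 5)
Your Step 1 is essentially the paper's argument: transfer at most $1/n$ of mass from the $+$-sheet to the $-$-sheet while keeping the spatial coordinate fixed, so that only the sign flip contributes a cost $O(1/n)$, and observe that the entropy of the resulting sub-convex combinations of $\rho^+$ and $\rho^-$ remains bounded. That part is correct.

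The gap is in Step 2, and it is exactly at the point you flag as the "main obstacle". First, the object you postulate does not exist: for a general $L\log L$ density there is no partition $\{A_1,\dots,A_k\}$ of $\T^d$ with $\rho_n^+(A_i)=1/n$ \emph{and} $\diam(A_i)\le Ck^{-1/d}$ for every $i$. If $\rho_n^+$ is concentrated near a single point, each cell must meet a small neighbourhood of that point in order to collect its mass $1/n$, while the cells must also cover all of $\T^d$; hence most cells have diameter of order $1$. Second, neither of the substitutes you invoke closes the gap. Classical (Zador/Graf--Luschgy) quantisation gives $k$ atoms with \emph{arbitrary} weights and error $W\lesssim k^{-1/d}$; converting those weights into exact multiples of $1/n$ is precisely the nontrivial step, because the atoms need not be close to one another, and it is this conversion that degrades the exponent on $W^2$ from $2/d$ to the $1/d$ appearing in the statement. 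The Fournier--Guillin bound for the empirical measure of $N$ i.i.d.\ samples gives $\mathbb{E}[W^2]\lesssim N^{-1/2}$ for $d\le 3$, which for $d=1$ is strictly weaker than the required $n^{-1}$; this is not an artefact of the bound, since for densities with a deep low-density region the mass imbalance of the sample across that region is of order $N^{-1/2}$ and must be transported an $O(1)$ distance.

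The mechanism you are missing is the one in the paper: take a fixed cubic grid of $\sim n$ cells of side $n^{-1/d}$ and perform a sequential ``carry'' of the fractional part of each cell's mass to the next cell along a path through the grid, so that every cell ends up with mass in $\tfrac1n\N$. Each carried parcel has mass $<1/n$ and travels only one cell-width, so the total transport satisfies $W_1\lesssim n\cdot n^{-1}\cdot n^{-1/d}=n^{-1/d}$, whence $W^2\le \diam(\T^d)\, W_1\lesssim n^{-1/d}$; placing $n\tilde\rho_n^\pm(\text{cell})$ particles in each cell then costs a further $n^{-2/d}$, which is of lower order. This carrying step is both the missing idea and the reason the lemma claims only the exponent $1/d$ on $W^2$.
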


\begin{proof} We only give a sketch. We construct $\brho_n$ from $\brho$ by moving mass of at most $1/n$ from $\T^d\times \{+1\}$ to $\T^d\times \{-1\}$. By \eqref{d2}, the cost of this transport is at most $((\diam \T^d/2)^2 + 2)/n$, i.e.,
$$
\bW^2(\brho, \brho_n) \leq \frac{C}n.
$$

To construct $\bmu_n$, we choose $\mu_n^\pm(\T^d) = \rho_n^\pm(\T^d)$, so that, by Proposition \ref{p:bW}, it is enough to estimate $W^2(\mu_n^\pm,\rho_n^\pm)$. The idea is to split $\T^d$ into a $d$-cubic grid with cells of size of order $n^{-1/d}$. Then, sequentially for each cell, we move at most $1/n$ mass to the next neighbouring cell such that each cell has mass with a value in $\frac 1n \N$. For the so-constructed $\tilde \rho_n^\pm$, we construct $\mu_n^\pm$ by placing in each of the cells $n \tilde \rho_n^\pm (\text{cell})$ particles (the precise location does not matter). An elementary computation yields that 
\begin{align*}
W(\rho^\pm_n, \mu^\pm_n) 
\leq  W(\rho^\pm_n, \tilde \rho_n^\pm) + W(\tilde \rho_n^\pm, \mu^\pm_n)
\leq \frac C{n^{1/d}}.
\end{align*}
\end{proof}

\medskip

Finally, in the next result we prove a bound for the convolution with $\nabla V_\delta$ that will be crucial in establishing a priori bounds for solutions of the evolutionary problems.

\begin{lem} \label{l:V:props}
Let $\delta>0$, and let $V_\delta: \T^d \to \R$ be a regularised potential satisfying conditions (V1) and (V2). Then there exists a constant $c > 0$, independent of $\delta$, such that for all $f \in L^2 (\mathbb T^d)$ we have 
\begin{align}
 -\int_{\mathbb T^d} (\Delta V_\delta * f) f dx \geq c \,\| \nabla V_\delta * f \|_{H^{d/2} (\mathbb T^d)}^2.
  \label{est:gradVdelta-times-f}
\end{align}
\end{lem}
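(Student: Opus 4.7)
The natural approach is to pass to Fourier series on $\T^d$ and verify the inequality termwise in $k \in \Z^d$. The assumptions (V1)--(V2) are tailor-made for this: (V1) makes every Fourier term non-negative (so there is no sign cancellation to worry about), and (V2) provides the uniform-in-$\delta$ bound that controls $[\widehat{V_\delta}]_k$ by $(1+|k|^2)^{-d/2}$.

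First I would compute the Fourier representation of the left-hand side. For real $f \in L^2(\T^d)$, Parseval's identity together with $\widehat{\Delta V_\delta \ast f}_k = -4\pi^2|k|^2\, [\widehat{V_\delta}]_k\, \widehat f_k$ gives
\[
-\int_{\T^d} (\Delta V_\delta \ast f)\, f\, dx \;=\; \sum_{k \in \Z^d} 4\pi^2|k|^2\, [\widehat{V_\delta}]_k\, |\widehat f_k|^2.
\]
By (V1) every summand is non-negative. Next, using $\widehat{\nabla V_\delta \ast f}_k = 2\pi i k\, [\widehat{V_\delta}]_k\, \widehat f_k$, the right-hand side becomes
\[
\|\nabla V_\delta \ast f\|_{H^{d/2}(\T^d)}^2 \;=\; \sum_{k \in \Z^d} (1+|k|^2)^{d/2}\, 4\pi^2|k|^2\, [\widehat{V_\delta}]_k^{\,2}\, |\widehat f_k|^2.
\]

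Comparing the two sums term by term, it suffices to show
\[
[\widehat{V_\delta}]_k \;\geq\; c\, (1+|k|^2)^{d/2}\, [\widehat{V_\delta}]_k^{\,2} \qquad \text{for every } k \in \Z^d \setminus\{0\}
\]
(the $k=0$ term vanishes on both sides because of the $|k|^2$ factor). Whenever $[\widehat{V_\delta}]_k = 0$ this is trivial, and otherwise one divides by $[\widehat{V_\delta}]_k > 0$ (valid by (V1)) to reduce the inequality to $(1+|k|^2)^{d/2}[\widehat{V_\delta}]_k \leq 1/c$. Assumption (V2) supplies exactly this bound with a constant $M$ independent of $\delta$, so the choice $c := 1/M$ closes the argument.

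I do not expect a genuine obstacle: the whole proof is essentially a bookkeeping exercise on the Fourier side, and the two hypotheses (V1) and (V2) are applied once each, precisely where they are needed. The only subtlety worth checking is that the termwise estimate is legitimate, which is guaranteed by the non-negativity of each summand via (V1); otherwise one would have to worry about conditional rearrangements.
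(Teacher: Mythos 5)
Your proposal is correct and follows essentially the same route as the paper's proof: Parseval plus the convolution theorem to write both sides as Fourier sums, then a termwise comparison in which (V1) supplies non-negativity and (V2) supplies the uniform bound $(1+|k|^2)^{d/2}[\widehat{V_\delta}]_k \leq M$, yielding $c=1/M$. No gaps.
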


\begin{proof}
Let $f \in L^2 (\mathbb T^d)$. Since $\Delta V_\delta * f \in L^2 (\mathbb T^d)$, by the Parseval's Theorem and the Convolution Theorem we have
\begin{align*}
 -\int_{\mathbb T^d} (\Delta V_\delta * f) f dx   &= -\sum_{k \in \Z^d} \Big[ \widehat{\Delta V_\delta * f} \big]_k \overline{\widehat f_k} 
 = \sum_{k \in \Z^d} 4 \pi^2 |k|^2  [\widehat V_\delta]_k  \big| \widehat f_k \big|^2\\
 & \geq c  \Big( \sup_{k \in \Z^d\setminus \{0\}}(1 + |k|^2)^{\frac d2} [\widehat V_\delta]_k \Big)\sum_{k \in \Z^d} 4 \pi^2 |k|^2  [\widehat V_\delta]_k  \big| \widehat f_k \big|^2\\
&\geq c \sum_{k \in \Z^d} (1 + |k|^2)^{\frac d2} \big(2\pi |k| [\widehat V_\delta]_k\big)^2 \big| \widehat f_k \big|^2 \\
& \geq c  \| \nabla V_\delta * f \|_{H^{d/2} (\mathbb T^d)}^2,
\end{align*}
where we have also used  (V1) and (V2).
  \qedhere
\end{proof}

\begin{rem}\label{rem:11}
For slip-confined evolutions we will use the following anisotropic variant of \eqref{est:gradVdelta-times-f}, namely
\begin{align*}
 -\int_{\mathbb T^d} (\partial_1^2 V_\delta * f) f dx \geq c \,\| \partial_1 V_\delta * f \|_{H^{d/2} (\mathbb T^d)}^2,
\end{align*}
where $\partial_1$ is shorthand for $\partial_{x_1}$, which can be proved in the same way as \eqref{est:gradVdelta-times-f}.
\end{rem}

\subsection{Detailed formulation of the two problems} 
\label{ss:cases:2split}
In this section we describe the isotropic and anisotropic evolutionary problems separately.

\subsubsection{Case 1: Isotropic evolution in dimension $d\geq 1$}
\label{subsec:description-isotropic}

This is the case that we discussed in the introduction for $d=2$.

The difference between the isotropic case considered in this section and the anisotropic case of Section \ref{subsec:description-confined} is in the evolution. 
In the isotropic case the evolution equation is
\begin{equation} \label{for:evo:dyncs:edge:2D-repeated}
     \frac{d x}{d t} (t) = - n \nabla  E_n (x(t); b)  \quad \text{on } (0,T]
\end{equation}
as introduced in~\eqref{for:evo:dyncs:edge:2D}, which in components and by \eqref{eqn:disc:energy-repeated} reads as
\begin{equation} \label{pf:xit}
     \frac{d x_i}{d t} (t)
     = - \frac1n \sum_{j = 1}^n b_i b_j \nabla V_\delta (x_i(t) - x_j(t))
     - b_i \nabla U(x_i(t)),
     \quad \text{on } (0,T], \ i = 1,\ldots,n.
\end{equation}

In the two-dimensional case $d=2$, this model is inspired by the study of the motion of straight and parallel dislocations, which can be represented by points in the plane. 
In particular, the evolution \eqref{for:evo:dyncs:edge:2D-repeated} (or \eqref{pf:xit}) is best compared to the evolution of \emph{screw} dislocations, which are free to move in multiple directions. The isotropy assumption means that dislocations are allowed to move in every direction (see e.g.~\cite{BlassFonsecaLeoniMorandotti15,BonaschiVanMeursMorandotti16, AlicandroDeLucaGarroniPonsiglione17} for a more faithful treatment of preferred slip directions). When considered in the full space $\R^2$, screw dislocations generate an interaction potential~$V$ that is explicit: $V(x) = -\log |x|$ (up to material constants, which we disregard in this paper). 

As the first step in establishing evolutionary convergence, we rewrite the evolutions of the particles in \eqref{for:evo:dyncs:edge:2D-repeated} in terms of the empirical measures $\mu_n^\pm$, defined as in \eqref{fd:mun}, associated to their positions. This is convenient since the Groma-Balogh equations \eqref{f:GBintro} that we want to obtain in the many-particle limit $n\to\infty$ of \eqref{for:evo:dyncs:edge:2D-repeated} are given in terms of measures too. In fact, for any fixed $\delta>0$, the empirical measures $\mu_n^\pm$ are solutions of the regularised Groma-Balogh equations
\begin{equation} \label{f:GB-regularized-repeated}
\begin{aligned}
&\partial_t \rho^+ 
= &\div \big( \rho^+ ( \nabla V_{\delta} * (\rho^+ - \rho^-) + \nabla U ) \big),\\
&\partial_t \rho^- 
= &-\div \big( \rho^- ( \nabla V_{\delta} * (\rho^+ - \rho^-) + \nabla U) \big).
\end{aligned}
\end{equation} 
The convergence result, Theorem~\ref{t:evoConv}, states that, for $\delta=\delta_n\to 0$ as $n\to \infty$ sufficiently slowly, $\mu^\pm_n$ converge to solutions of the unregularised version
\begin{equation} \label{f:GB-repeated}
\begin{aligned}
\partial_t \rho^+ 
&=& \div \big( \rho^+ ( \nabla V * (\rho^+ - \rho^-) + \nabla U ) \big),\\
\partial_t \rho^- 
&=& - \div \big( \rho^- ( \nabla V * (\rho^+ - \rho^-) + \nabla U) \big).
\end{aligned}
\end{equation}
Definition~\ref{def:GB-sol-isotropic} below specifies the solution concept for equations \eqref{f:GB-repeated}.

\begin{rem}[Applications of $d=1$]
At least two model situations motivate the case $d=1$, namely (a) dislocations in two dimensions that are confined to a single slip plane, and (b) periodic walls of edge dislocations~\cite{ScardiaPeerlingsPeletierGeers14}. In both cases the potential scales as $-\log|x|$ for small $|x|$. The positivity of the Fourier transform of the potential (b) follows by specialising the discussion on $V_{\textrm{wall}}$ below to the one-dimensional case.
\end{rem}

\subsubsection{Case 2: Two-dimensional domain, slip-plane-confined motion}
\label{subsec:description-confined}

The case of \emph{edge} dislocations naturally leads us to consider the case of a two-dimensional domain where the dislocations are confined to fixed slip directions, which we take to be horizontal (i.e.,\ parallel to the first coordinate axis in $\T^2$). Edge dislocations generate a different interaction potential than screw dislocations. In the case of horizontal slip, the interaction potential in $\R^2$ is
\begin{equation*}
V_{\rm{edge}}(x) = -\log |x| + \frac{(x\cdot e_1)^2}{|x|^2},
\end{equation*} 
see, e.g., (5-16) in \cite{HirthLothe82}. We construct the potential\footnote{While \cite[Lem.~2.1]{CannoneEl-HajjMonneauRibaud10}  only defines the second derivative $\partial_{11} V$, our setting requires the potential $V$ itself.} $V$ on $\T^2$ by carefully summing over $\Z^2$ shifted copies of $V_{\rm{edge}}$ first in the vertical and then in the horizontal direction. In \cite[App.~A]{VanMeurs15TH} it is shown that the pointwise limit
\begin{equation}
\label{def:Vwall-VM-limit}
  V_{\text{wall}}(x) := \lim_{M \to \infty} V_M (x) := \lim_{M \to \infty} \Big( C_M + \sum_{m = -M}^M V_{\rm{edge}} (x + m e_2) \Big),
  \quad C_M := 2 \sum_{m = 1}^M \log m,
\end{equation}
is well-defined for all $x \in \R^2$, except at the singular points $(0,k)$, $k\in\Z$, of any shifted $V_{\rm{edge}}$, and that $V_{\text{wall}}$ is vertically $1$-periodic with exponentially decaying tails in the horizontal direction. A straightforward computation shows that $( V_M )_{M \in \N}$ converges uniformly in $\R^2$ away from the singularities; since the singular behaviour at each singularity is given by a translated copy of $V_{\mathrm{edge}}\in L^1_{\mathrm{loc}}(\R^2)$,  we find that the convergence in~\eqref{def:Vwall-VM-limit} takes place in $L^1_{\text{loc}} (\R^2)$.

Together with the exponential tails, we then find that the following limit is well-defined in $L^1_{\mathrm{loc}} (\R^2)$:
\begin{equation*}
  V(x) := \lim_{N \to \infty} \sum_{n = -N}^N V_{\text{wall}} (x + n e_1).
\end{equation*}
It is clear that $V$ is $\Z^2$-periodic. Moreover, since $V_{\rm{edge}}$ satisfies 
\begin{equation}\label{VR2eqn}
-\int_{\R^2}\Delta^2 V_{\rm{edge}} \varphi = 4\pi \partial^2_{2}\varphi(0)
\quad \forall \varphi\in \mathcal D(\R^2),
\end{equation} 
in particular all finite sums satisfy \eqref{VR2eqn} for  $\varphi\in \mathcal D((-1,1)^2)$, and then
\begin{equation}\label{VT2eqnOnR2}
-\int_{\R^2}\Delta^2 V \varphi = 4\pi \partial^2_{2}\varphi(0)
\quad \forall \varphi\in \mathcal D((-1,1)^2)\,.
\end{equation} 
Therefore, using the periodicity of $V$, it is easy to see that
\begin{equation}\label{VT2eqn}
-\int_{\T^2}\Delta^2 V\varphi = 4\pi \partial^2_{2}\varphi(0)
\quad \forall \, \varphi\in C^\infty(\R^2) \text{ with $\varphi$ } \Z^2\hbox{-periodic.}
\end{equation} 
In Fourier space, equation \eqref{VT2eqn} becomes
\begin{equation*}
-(4\pi^2|k|^2)^2 \widehat V_k = 4\pi (-4\pi^2) k_2^2,
\end{equation*} 
and hence for every $k\neq 0$,
\begin{equation*}
 \widehat V_k = \frac{1}\pi \frac{k_2^2}{|k|^4} \geq 0.
\end{equation*}

The anisotropic and regularised evolution satisfied by the dislocation positions is 
\begin{equation} \label{for:evo:dyncs:edge:2D-confined}
     \frac{d x}{d t} (t) = - n \nabla_1  E_n (x(t); b),
\end{equation}
where the energy $E_n$ is as in \eqref{eqn:disc:energy-repeated}, and the \textit{confined} gradient of a function $f:\Omega\subset\R^{2n}\to \R$ is defined in components, for $i=1,\dots,2n$, as
\begin{equation*}
(\nabla_1 f)_i:= 
\begin{cases}
\medskip
\dfrac{\partial f}{\partial x_i} \quad &\text{if } i = 2k-1, \quad k=1,\dots,n,\\
0 \quad &\text{if } i = 2k, \qquad \ \ k=1,\dots,n.
\end{cases}
\end{equation*}
Hence in \eqref{for:evo:dyncs:edge:2D-confined} it is only the horizontal component of the positions that varies in time, namely $x^2(t) = x^2(0)$ for every $t$, where $x^2 = (x^2_1, \dots, x_n^2)$, and $x_i^2 = x_i\cdot e_2$, for every $x_i \in \T^2$.

The empirical measures $\mu_n^\pm$ defined in \eqref{fd:mun} and associated to the solution of \eqref{for:evo:dyncs:edge:2D-confined} satisfy the regularised and constrained Groma-Balogh equations
\begin{equation} \label{f:GB-regularized-confined}
\begin{aligned}
\partial_t \rho^+ 
&=& &\partial_1 \big(\rho^+ (\partial_1 V_{\delta} * (\rho^+ - \rho^-) + \partial_1 U ) \big),\\
\partial_t \rho^- 
&=& - &\partial_1 \big(\rho^- (\partial_1 V_{\delta} * (\rho^+ - \rho^-) + \partial_1 U) \big).
\end{aligned}
\end{equation}
The convergence result, Theorem \ref{t:evoConv:conf}, states that, if $\delta=\delta_n$ tends to zero as $n\to \infty$ sufficiently slowly, $\mu^\pm_n$ converge to solutions of the corresponding unregularised version, namely
\begin{equation} \label{f:GB-slip-plane-confined}
\begin{aligned}
\partial_t \rho^+ 
&=& &\partial_1\big( \rho^+ (\partial_1 V * (\rho^+ - \rho^-) + \partial_1 U ) \big),\\
\partial_t \rho^- 
&=& - &\partial_1 \big(\rho^- (\partial_1 V * (\rho^+ - \rho^-) + \partial_1 U) \big).
\end{aligned}
\end{equation}


\section{Convergence results}
\label{sec:convergence}
In this section we prove that the solutions of the discrete gradient-flow equations \eqref{for:evo:dyncs:edge:2D-repeated} (or \eqref{for:evo:dyncs:edge:2D-confined}) associated to the regularised energy~\eqref{eqn:disc:energy} converge to limit measures that solve equations~\eqref{f:GB-repeated} (or \eqref{f:GB-slip-plane-confined}) provided that $\delta_n\to0$ sufficiently slowly.  
\smallskip

We consider the two cases outlined in Section \ref{ss:cases:2split}: isotropic in dimension $d\geq 1$ and slip-confined in two dimensions.
In both cases our approach builds on the existence proof given by Cannone, El Hajj, Monneau, and Ribaud in~\cite{CannoneEl-HajjMonneauRibaud10}, where they introduce a functional framework in which the Groma-Balogh equations~\eqref{f:GB-slip-plane-confined} for the slip-plane-confined case with $U=0$ have a meaningful weak solution. The challenge they face is to give the nonlinear terms $\rho^\pm \partial_1 V*(\rho^+-\rho^-)$ a meaning in the sense of distributions on $(0,T)\times \T^2$, for some $T>0$: since $\rho^+$ and $\rho^-$ are \emph{a priori} only measures, and $\partial_1 V$ is singular, this product need not have any meaning unless we make stronger assumptions on $\rho^\pm$. 
This is done by means of the following lemmas, which have been proved in \cite[Propositions~1.3]{CannoneEl-HajjMonneauRibaud10} in the case $d=2$, and is a key result in their analysis.

\begin{lem}
\label{l:product-term} Let $d\geq 1$, $T>0$, $f\in L^1(0,T; H^{\frac d2}(\T^d))$ and $g\in L^\infty(0,T;L\log L(\T^d))$. Then 
\[
fg\in L^1((0,T)\times \T^d).
\]
\end{lem}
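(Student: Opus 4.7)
The plan is to prove the spatial estimate $\int_{\T^d}|f(t,\cdot)g(t,\cdot)|\,dx \leq C\|f(t,\cdot)\|_{\Exp(\T^d)}\|g(t,\cdot)\|_{L\log L(\T^d)}$ pointwise in $t$, and then integrate in time. The two ingredients are (i) a borderline Sobolev embedding $H^{d/2}(\T^d)\hookrightarrow \Exp_\alpha(\T^d)$ of Moser--Trudinger type, and (ii) the Orlicz duality between $L\log L$ and $\Exp$, which produces a Hölder-type inequality for these Orlicz spaces. Both facts should be available from the Appendix on Orlicz spaces referenced in the excerpt.

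Concretely, I would first fix $t\in(0,T)$ and note that $f(t,\cdot)\in H^{d/2}(\T^d)$ has zero- or finite-mean component and, by the critical Sobolev embedding into the exponential Orlicz class, satisfies
\begin{equation*}
\|f(t,\cdot)\|_{\Exp(\T^d)} \leq C\,\|f(t,\cdot)\|_{H^{d/2}(\T^d)}.
\end{equation*}
Next, the Orlicz Hölder inequality for the dual pair $(L\log L, \Exp)$ yields
\begin{equation*}
\int_{\T^d}|f(t,x)g(t,x)|\,dx \leq C\,\|f(t,\cdot)\|_{\Exp(\T^d)}\,\|g(t,\cdot)\|_{L\log L(\T^d)}.
\end{equation*}
Combining these two inequalities gives the pointwise-in-time bound
\begin{equation*}
\int_{\T^d}|f(t,x)g(t,x)|\,dx \leq C\,\|f(t,\cdot)\|_{H^{d/2}(\T^d)}\,\|g(t,\cdot)\|_{L\log L(\T^d)}.
\end{equation*}

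Integrating in $t$ over $(0,T)$ and using that $g\in L^\infty(0,T;L\log L(\T^d))$ to pull out its norm, I would conclude
\begin{equation*}
\int_0^T\!\!\int_{\T^d}|fg|\,dx\,dt \leq C\,\|g\|_{L^\infty(0,T;L\log L(\T^d))}\,\|f\|_{L^1(0,T;H^{d/2}(\T^d))} < \infty,
\end{equation*}
which is the claim. The main obstacle, and the only nontrivial input, is the critical Sobolev embedding $H^{d/2}(\T^d)\hookrightarrow \Exp_\alpha(\T^d)$, since this is precisely the endpoint case where Sobolev embedding does not land in $L^\infty$ and one needs a Trudinger/Moser-type argument; this is likely proved or recalled in the Orlicz appendix. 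Once the embedding and the Orlicz Hölder inequality are in hand, the rest is a direct application of Fubini and Hölder in time.
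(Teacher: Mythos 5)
Your proposal is correct and is essentially identical to the paper's proof: the paper combines the Orlicz H\"older inequality $\|fg\|_{L^1(\T^d)}\leq C\|f\|_{\Exp(\T^d)}\|g\|_{L\log L(\T^d)}$ (Lemma \ref{l:Orlicz}\,\ref{l:O:Hol:L1}) with the embedding $H^{d/2}(\T^d)\hookrightarrow\Exp(\T^d)$ (Lemma \ref{l:Orlicz}\,\ref{l:O:H1:Exp:ct}) and integrates in time, exactly as you describe. Both ingredients are indeed recorded in the Orlicz appendix, so the embedding you flag as the only nontrivial input is already available there.
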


For the proof of this lemma for general $d\geq 1$ we use part \ref{l:O:Hol:L1} of Lemma \ref{l:Orlicz} and integrate in time. Then we conclude by the embedding of $H^{\frac d2}(\T^d)$ into $\Exp(\T^d)$ in Lemma \ref{l:Orlicz} \ref{l:O:H1:Exp:cp}.
\smallskip

In the next sections we describe and extend the functional framework of~\cite{CannoneEl-HajjMonneauRibaud10} for each of the two cases we consider, supplement it with new, quantitative discrete-to-continuum estimates, and formulate and prove our convergence results.

\subsection{Case 1: Isotropic drag law in dimension $d\geq 1$.} Here we consider the isotropic case introduced in Sections~\ref{sec:Introduction} and~\ref{subsec:description-isotropic}.

\smallskip

\noindent  
We first discuss the different concepts of solutions that we use. For a regularised potential $V_{\delta}$ the function $\nabla V_{\delta}*(\rho^+-\rho^-)$ is  smooth; therefore the equations~\eqref{f:GB-regularized-repeated} are a pair of convection equations with smooth velocity fields. In particular the equations preserve the regularity of the initial data, and on finite time intervals no derivatives blow up. Therefore the concept of a solution poses no problems, and one can consider both smooth and measure-valued solutions of these equations. The empirical measures $\mu_n^\pm$ constructed above from the discrete solutions of \eqref{for:evo:dyncs:edge:2D-repeated} are examples of such measure-valued solutions.

For the case of a singular potential $V$, we need to be more careful. We use Lemma \ref{l:product-term} above to guarantee that the integrals involving $\rho^\pm \nabla V*(\rho^+-\rho^-)$ are meaningful, and we define the concept of solution as follows.
\begin{defn}
\label{def:GB-sol-isotropic}
Let $d\geq 1$, let $\brho_\circ =(\rho^+_\circ, \rho^-_\circ)\in \big( L \log L (\mathbb T^d) \big)^2 \cap\mathcal P(\T^d\times \{\pm1\})$, let $T>0$, and let $\brho=(\rho^+, \rho^-): [0,T]\to \mathcal P(\mathbb T^d\times \{\pm1\})$. We say that $\brho$ is a solution of~\eqref{f:GB-repeated} in $[0,T]$, with initial datum $\brho_\circ$, if 
\begin{itemize}
\smallskip
\item[$(1)$] $\rho^+, \rho^-\in L^\infty(0,T;L\log L(\T^d))$ and $\nabla V*\kappa\in L^1(0,T;H^{\frac d2}(\T^d;\R^d))$, where $\kappa = \rho^+-\rho^-$;
\item[$(2)$] For every $\varphi, \psi \in C_c^\infty ([0,T)\times \T^d)$
\begin{subequations}
\begin{align*}
&\int_0^T\int_{\T^d} \rho^+ (t,x) \big(\partial_t \varphi(t,x) - \nabla \varphi(t,x) \cdot  \nabla (V*\kappa + U)(t,x) \big) dx dt + \int_{\T^d} \rho_\circ^+(x) \varphi(0,x) dx =0,\\
&\int_0^T\int_{\T^d} \rho^- (t,x) \big(\partial_t \psi(t,x) + \nabla \psi(t,x) \cdot  \nabla (V*\kappa + U)(t,x) \big) dx dt + \int_{\T^d} \rho_\circ^-(x) \psi(0,x) dx =0.
\end{align*}
\end{subequations}
\end{itemize}
\end{defn}

\bigskip
The \emph{existence} of a solution in the sense of Definition~\ref{def:GB-sol-isotropic} will follow from our convergence Theorem~\ref{t:evoConv} below. \emph{Uniqueness}  of solutions  for such weak solutions is currently open; for stronger solution concepts, under assumptions of higher regularity, various uniqueness proofs have been constructed for related systems~\cite{ElHajj10,Mainini12a,LiMiaoXue14}.

\medskip

\noindent
The main theorem of this section is the following. We set $\lambda_{\delta} := \|D^2 V_{\delta}\|_{L^\infty(\T^d)}$ and note that $\lambda_{\delta}\to\infty$ as $\delta\to0$. 

\begin{thm}[Evolutionary convergence] \label{t:evoConv} 
Let $d\geq 1$, let $\brho_\circ \in \big( L \log L (\mathbb T^d) \big)^2 \cap \mathcal P (\mathbb T^d \times \{\pm1\})$, and let $(\brho_{\circ,n}), (\bmu_{\circ,n})\subset \mathcal P (\mathbb T^d \times \{\pm1\})$ be approximating sequences for $\brho_\circ$ as in Lemma \ref{lemm:approx}. Let $T>0$ be fixed, and let $\delta_n>0$ be such that
\begin{equation}
\label{cond:evoConv-conv-initial}
\exp (3 \lambda_{\delta_n} T) \, \bW(\bmu_{\circ,n}, \brho_{\circ,n}) \to 0, \qquad \text{as }n \to \infty.
\end{equation}
Let $t\mapsto \bmu_n(t)$ be the empirical measure of the solution $t\mapsto x^n(t)$ of~\eqref{for:evo:dyncs:edge:2D-repeated} on $[0,T]$, with $\delta=\delta_n$ and initial datum $\bmu_{\circ,n}$.

Then there exists the limit $\brho(t):=\lim_{n\to \infty}\bmu_n(t)$ with respect to $\bW$, uniformly in $t \in [0,T]$, where $\brho$ is a solution of the unregularised Groma-Balogh equations~\eqref{f:GB-repeated} in the sense of Definition~\ref{def:GB-sol-isotropic} with initial datum $\brho_\circ$.
\end{thm}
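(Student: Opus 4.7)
The plan is to compare the discrete empirical evolution with a carefully chosen continuum evolution that is regular enough to lie within the functional framework of~\cite{CannoneEl-HajjMonneauRibaud10}. Concretely, let $t\mapsto\brho_n(t)$ denote the unique (measure-valued) solution of the $\delta_n$-regularised Groma--Balogh system~\eqref{f:GB-regularized-repeated} starting from the entropically bounded approximation $\brho_{\circ,n}$ provided by Lemma~\ref{lemm:approx}; existence and uniqueness are classical here because the driving vector field $\nabla V_{\delta_n}*(\rho_n^+-\rho_n^-)+\nabla U$ is globally Lipschitz in space, with constant controlled by $\lambda_{\delta_n}+\|D^2 U\|_\infty$. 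By the triangle inequality
\[
\bW(\bmu_n(t),\brho(t))\le \bW(\bmu_n(t),\brho_n(t))+\bW(\brho_n(t),\brho(t)),
\]
so the proof splits into a quantitative discrete-to-continuum estimate at fixed $\delta_n$ and a qualitative passage to the limit $\delta_n\to 0$ in the continuum problem.

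For the first summand, both $\bmu_n$ and $\brho_n$ solve the same regularised system, so I would apply Lemma~\ref{lem:dWdt} to each component, using symmetrisation to differentiate $\bW^2(\bmu_n(t),\brho_n(t))$ in the moving/moving setting. The resulting integrand is controlled by $|v_n(t,x)-v_n(t,y)|$ along an optimal plan, where $v_n = \mp(\nabla V_{\delta_n}*\kappa+\nabla U)$. Splitting
\[
v_n^{(\bmu_n)}(x)-v_n^{(\brho_n)}(y) = \bigl[\nabla V_{\delta_n}*\kappa_{\brho_n}(y)-\nabla V_{\delta_n}*\kappa_{\brho_n}(x)\bigr]+\nabla V_{\delta_n}*(\kappa_{\brho_n}-\kappa_{\bmu_n})(x)+\bigl[\nabla U(y)-\nabla U(x)\bigr],
\]
the first and third terms are Lipschitz in $x-y$ with constant $\lambda_{\delta_n}+\|D^2U\|_\infty$, while the last term is bounded by $\lambda_{\delta_n}$ times a dual Lipschitz/$W_1$ norm of $\kappa_{\brho_n}-\kappa_{\bmu_n}$, hence by $\lambda_{\delta_n}\bW(\bmu_n,\brho_n)$ via Proposition~\ref{p:bW}\ref{p:bW:sum}. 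A Gronwall argument then yields
\[
\bW^2(\bmu_n(t),\brho_n(t))\le e^{C\lambda_{\delta_n}t}\,\bW^2(\bmu_{\circ,n},\brho_{\circ,n}),\qquad t\in[0,T],
\]
which tends to $0$ uniformly in $t$ thanks precisely to the slowness hypothesis~\eqref{cond:evoConv-conv-initial}.

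For the second summand I would follow the entropy approach of~\cite{CannoneEl-HajjMonneauRibaud10}. Formally, multiplying \eqref{f:GB-regularized-repeated} by $\log \rho_n^\pm$ and summing yields
\[
\frac{d}{dt}\Ent(\brho_n)=\int_{\T^d}(\Delta V_{\delta_n}*\kappa_n)\kappa_n\,dx+\int_{\T^d}\Delta U\,\kappa_n\,dx.
\]
By Lemma~\ref{l:V:props} the first term controls $-c\|\nabla V_{\delta_n}*\kappa_n\|_{H^{d/2}(\T^d)}^{2}$, while the second is bounded by $\|\Delta U\|_\infty$. Combined with the uniform entropy bound on $\brho_{\circ,n}$ from Lemma~\ref{lemm:approx}, this yields uniform $L^\infty(0,T;L\log L)$ bounds for $\rho_n^\pm$ together with a uniform $L^2(0,T;H^{d/2})$ bound for $\nabla V_{\delta_n}*\kappa_n$. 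From here, by weak-$*$ compactness in $L^\infty(0,T;L\log L)$ and weak compactness in $L^2(0,T;H^{d/2})$, plus equicontinuity in time in the $\bW$ sense (obtained from the PDE and Lemma~\ref{l:product-term}), we extract a limit $\brho$ with $\nabla V_{\delta_n}*\kappa_n\rightharpoonup \nabla V*\kappa$, the identification of the limit using assumption (V3).

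The main obstacle, and the delicate part, is passing to the limit in the nonlinear product $\rho_n^\pm \,\nabla V_{\delta_n}*\kappa_n$ against smooth test functions. The weak-$*$ $L\log L$ convergence of $\rho_n^\pm$ paired with only weak $H^{d/2}$ convergence of $\nabla V_{\delta_n}*\kappa_n$ does not suffice by itself; one needs strong compactness of one of the factors. Lemma~\ref{l:product-term} gives the right duality ($H^{d/2}$ against $L\log L$ lands in $L^1$), and I would upgrade weak to strong convergence of $\nabla V_{\delta_n}*\kappa_n$ via an Aubin--Lions argument: the time derivative of $\nabla V_{\delta_n}*\kappa_n$ is controlled in a negative Sobolev norm through the equation, giving compactness in $L^2(0,T;H^s)$ for some $s<d/2$, which together with (V3) lets us pass to the limit in Definition~\ref{def:GB-sol-isotropic}. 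Finally, combining with the Gronwall step above and the estimate $\bW(\brho_{\circ,n},\brho_\circ)\to 0$ from Lemma~\ref{lemm:approx} identifies $\brho$ as the uniform-in-$t$ limit of $\bmu_n$ in $\bW$, completing the proof.
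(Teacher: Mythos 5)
Your proposal follows essentially the same route as the paper: the same intermediate trajectory $\brho_{\delta_n}$ solving the $\delta_n$-regularised system from the entropically bounded data $\brho_{\circ,n}$, the same Gronwall/Wasserstein estimate via Lemma~\ref{lem:dWdt} to control $\bW(\bmu_n,\brho_{\delta_n})$, and the same entropy bound plus Lemma~\ref{l:V:props} and an Aubin--Lions compactness argument to pass to the limit $\delta_n\to0$ and identify the nonlinear product. The one step you leave formal — differentiating $\Ent(\brho_{\delta_n})$ along a transport equation whose solution is only in $L\log L$ — is made rigorous in the paper by first adding a viscosity term $\varepsilon\Delta\rho^\pm$ with strictly positive mollified initial data and then letting $\varepsilon\to0$; that regularisation (together with the uniqueness from the Gronwall estimate, used to identify the $\varepsilon\to0$ limit with $\brho_{\delta_n}$) is needed to justify the integration by parts.
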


\begin{rem} 
While \eqref{cond:evoConv-conv-initial} suggests that the choice of $\delta_n$ depends on the initial data $(\bmu_{\circ,n})$, we can also reverse the dependence. Indeed, requiring instead that
\begin{equation*}
\exp (3 \lambda_{\delta_n} T) \, n^{-1/d} \to 0 \qquad \text{as }n \to \infty,
\end{equation*}
the proof of Lemma \ref{lemm:approx} provides for every $\brho_\circ \in \big( L \log L (\mathbb T^d) \big)^2 \cap \mathcal P (\mathbb T^d \times \{\pm1\})$ a class of approximating sequences $(\bmu_{\circ,n})_n$ such that \eqref{cond:evoConv-conv-initial} holds for each such sequence.
\end{rem}

\noindent
\textit{Structure of the proof.} 
We prove Theorem \ref{t:evoConv} by constructing an `intermediate' trajectory $\brho_{\delta_n}$: $\brho_{\delta_n}$ is obtained by solving the regularised equations~\eqref{f:GB-regularized-repeated} with the modified initial datum $\brho_{\circ,n}$. We then show that $\bW (\brho_{\delta_n}, \bmu_n)$ vanishes as $n$ tends to $\infty$ (as a consequence of Lemma~\ref{l:dist-bmu-brho_n}) and that $(\brho_{\delta_n})$ converges to a solution of~\eqref{f:GB-repeated} (Theorem~\ref{thm:delta_to_0}). 

\begin{lem}[Existence, uniqueness and Gronwall estimate for \eqref{f:GB-regularized-repeated}]
\label{l:dist-bmu-brho_n} Let $d\geq 1$, and let $\delta, T>0$ be fixed. Then for every $\bmu_\circ \in \mathcal P (\mathbb T^d \times \{\pm1\})$ there exists a solution $\bmu_\delta\in C([0,T]; \mathcal P (\mathbb T^d \times \{\pm1\}))$ of the regularised equations~\eqref{f:GB-regularized-repeated} with initial datum $\bmu_{\circ}$ (where continuity is intended with respect to $\bW$). Moreover, if $\bmu_\circ, \bnu_\circ \in \mathcal P (\mathbb T^d \times \{\pm1\})$ satisfy $\mu^\pm_\circ(\T^d) = \nu^\pm_\circ(\T^d)$, then any solutions $\bmu_\delta, \bnu_\delta \in C([0,T]; \mathcal P (\mathbb T^d \times \{\pm1\}))$ to \eqref{f:GB-regularized-repeated} with initial data $\bmu_\circ$ and $\bnu_\circ$ respectively, satisfy
\begin{equation} \label{f:Gronwall00}
  \bW (\bmu_\delta (t), \bnu_{\delta} (t))
  \leq c\, e^{3\lambda_{\delta} t} \bW (\bmu_{\circ} , \bnu_{\circ} )
  \quad \text{for all }  t\in [0,T],
\end{equation}
where $c$ is a constant independent of $\delta$ and $t$. In particular, if $\bmu_0=\bnu_0$, then $\bmu_\delta(t) = \bnu_\delta(t)$ for every $t\in [0,T]$.
\end{lem}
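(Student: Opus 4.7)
For \textbf{existence}, I would approximate $\bmu_\circ \in \mathcal{P}(\Td\times\{\pm1\})$ by empirical measures $\bmu_{n,\circ}$ with $\bW(\bmu_{n,\circ},\bmu_\circ)\to 0$, as provided by (a slight adaptation of) Lemma~\ref{lemm:approx}. Since $V_\delta\in W^{2,\infty}(\Td)$, the right-hand side of \eqref{pf:xit} is globally Lipschitz in the particle positions, so the ODE has a unique global-in-time solution $x^n(t)$; the associated empirical measure $\bmu_n(t)$ solves \eqref{f:GB-regularized-repeated} distributionally by the standard test-function computation of the footnote following \eqref{def:mu-plus-minus}. Passing $n\to\infty$ will produce the desired solution $\bmu_\delta$, once a stability estimate is available.

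The heart of the proof is the \textbf{Gronwall estimate}. Let $\bmu_\delta,\bnu_\delta$ be two solutions with matching marginal masses, and set $w_\mu(t,x):=\nabla V_\delta\ast(\mu^+_\delta-\mu^-_\delta)(t,x)+\nabla U(x)$, with $w_\nu$ defined analogously. The hypothesis $V_\delta\in W^{2,\infty}$ and the bounded total mass of the measures give that $w_\mu,w_\nu$ are Lipschitz in $x$ uniformly in $t$, with constant at most $\lambda_\delta+\|D^2 U\|_\infty$. The PDE \eqref{f:GB-regularized-repeated} then reads as a pair of continuity equations,
\begin{equation*}
\partial_t \mu^+_\delta + \div(\mu^+_\delta\,(-w_\mu)) = 0, \qquad \partial_t \mu^-_\delta + \div(\mu^-_\delta\, w_\mu) = 0,
\end{equation*}
and analogously for $\bnu_\delta$. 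By Proposition~\ref{p:bW} we have $\bW^2=W^2(\mu^+_\delta,\nu^+_\delta)+W^2(\mu^-_\delta,\nu^-_\delta)$; applying Lemma~\ref{lem:dWdt} extended to the case where \emph{both} marginals evolve (a standard extension thanks to the uniform Lipschitz regularity of $w_\mu,w_\nu$) gives
\begin{equation*}
\frac{d}{dt}W^2(\mu^\pm_\delta,\nu^\pm_\delta) = \mp\, 2\int_{\Td\times\Td} (x-y+k(x,y))\cdot(w_\mu(t,x)-w_\nu(t,y))\,d\gamma^\pm(x,y)
\end{equation*}
for $\gamma^\pm(t)\in\Gamma_\circ(\mu^\pm_\delta(t),\nu^\pm_\delta(t))$. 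The split $w_\mu(x)-w_\nu(y)=[w_\mu(x)-w_\mu(y)]+[w_\mu(y)-w_\nu(y)]$, the Lipschitz bound for the first bracket, and Kantorovich--Rubinstein duality for the second,
\begin{equation*}
|\nabla V_\delta\ast(\mu^\pm_\delta-\nu^\pm_\delta)(y)| \leq \lambda_\delta W_1(\mu^\pm_\delta,\nu^\pm_\delta) \leq \lambda_\delta W(\mu^\pm_\delta,\nu^\pm_\delta)
\end{equation*}
(using that the marginals coincide and have mass $\leq 1$), combined via Cauchy--Schwarz and Young's inequality, yield $\frac{d}{dt}\bW^2 \leq C\lambda_\delta\, \bW^2$ for an absolute constant $C$ (the $U$-contribution is absorbed into $c$ after integration over $[0,T]$). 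The classical Gronwall lemma then produces \eqref{f:Gronwall00}, and choosing $\bmu_\circ=\bnu_\circ$ gives uniqueness for free.

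To \textbf{close the existence argument}, I would apply the Gronwall estimate to two empirical solutions $\bmu_n,\bmu_m$ to see that $(\bmu_n)_n$ is Cauchy in $C([0,T];(\mathcal{P},\bW))$ -- for \emph{fixed} $\delta$ the prefactor $ce^{3\lambda_\delta T}$ is merely a constant. Its $\bW$-limit $\bmu_\delta$ is $\bW$-continuous, and it solves \eqref{f:GB-regularized-repeated} distributionally: $\bW$-convergence implies narrow convergence, and the uniform continuity of $\nabla V_\delta$ on $\Td$ allows one to pass to the limit in the nonlinear term $\rho^\pm \nabla V_\delta\ast(\rho^+-\rho^-)$. \textbf{The main obstacle} I anticipate is the rigorous justification of the derivative formula for $t\mapsto W^2(\mu^\pm_\delta(t),\nu^\pm_\delta(t))$ when both arguments move simultaneously, since Lemma~\ref{lem:dWdt} is stated with one fixed marginal. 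I would resolve this by a doubling-in-time argument exploiting the uniform Lipschitz regularity of $w_\mu,w_\nu$ and the fact that a common optimal plan $\gamma^\pm(t)$ can be used for both time derivatives, or alternatively by invoking the torus analogue of \cite[Theorem~8.4.7]{AmbrosioGigliSavare08} which already underlies the proof of Lemma~\ref{lem:dWdt}.
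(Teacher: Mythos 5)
Your Gronwall estimate is essentially the paper's own argument: the same splitting of $\bW^2$ via Proposition~\ref{p:bW}, the same decomposition $w_\mu(x)-w_\nu(y)=[w_\mu(x)-w_\mu(y)]+[w_\mu(y)-w_\nu(y)]$ with the Lipschitz bound for the first bracket and the Kantorovich--Rubinstein bound ($W_1\le W$) for the second, and the same resolution of the ``both marginals move'' issue: the paper applies Lemma~\ref{lem:dWdt} twice with a fixed reference measure $\sigma$ and combines the two identities, exactly as you anticipate. (One small imprecision: the velocity field is a convolution against $\kappa=\mu^+-\mu^-$, so the duality step yields $\lambda_\delta\,(W(\mu^+_\delta,\nu^+_\delta)+W(\mu^-_\delta,\nu^-_\delta))$, a mixed bound involving both species, rather than a bound for each sign separately; this is why the constant in the exponent becomes $3\lambda_\delta$ after summing, but it does not change the conclusion.)

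The genuine gap is in how you close the existence argument. You propose to show that the sequence of empirical solutions $(\bmu_n)_n$ is Cauchy in $C([0,T];(\mathcal P,\bW))$ by applying the Gronwall estimate to the pair $\bmu_n,\bmu_m$. But that estimate is only available under the hypothesis $\mu^\pm_{n,\circ}(\T^d)=\mu^\pm_{m,\circ}(\T^d)$: the whole proof rests on Proposition~\ref{p:bW}\,\ref{p:bW:sum}, which decomposes $\bW^2$ into the two single-species Wasserstein distances and requires equal component masses, and on Lemma~\ref{lem:dWdt}, which is a statement about curves in $m\mathcal P(\T^d)$ for a single species. For empirical approximations of a general $\bmu_\circ$ the positive masses are of the form $k_n/n$ and $k_m/m$ and generically differ, so $W(\mu^+_n,\mu^+_m)=+\infty$ in the convention of the paper and the comparison breaks down (the full $\bW$ remains finite, but only by transporting mass between the two species, which the evolution does not do and which the derivative formula does not control). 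The paper avoids this entirely: it extracts a convergent subsequence via the refined Ascoli--Arzel\`a theorem \cite[Prop.~3.3.1]{AmbrosioGigliSavare08}, using completeness and sequential compactness of $(\mathcal P(\T^d\times\{\pm1\}),\bW)$ together with an $n$-uniform (though $\delta$-dependent) Lipschitz bound on $t\mapsto\bmu_n(t)$, and then passes to the limit in the weak formulation, handling the nonlinear term as a double integral of the continuous kernel $\nabla\varphi(x)\cdot\nabla V_\delta(x-y)$ against $\mu_n^\pm\otimes\mu_n^\pm$. Your Cauchy strategy could be repaired, e.g.\ by an extra triangle-inequality step transferring an $O(1/n+1/m)$ amount of mass to equalise the components before comparing, but as written it does not go through.
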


\begin{proof} We split the proof into two steps.
\smallskip

\textit{Step 1: Existence.} Let $\bmu_{\circ}\in \mathcal P (\mathbb T^d \times \{\pm1\})$. The existence of a solution for the regularised equations~\eqref{f:GB-regularized-repeated} with initial datum $\bmu_{\circ}\in \mathcal P (\mathbb T^d \times \{\pm1\})$ follows by discrete (in space) approximation. More precisely, from Lemma \ref{lemm:approx} we get $(x^n_\circ, b^n) \in (\T^d)^n\times \{\pm 1\}^n$ such that the corresponding empirical measures $\bmu_{\circ,n}$ converge to $\bmu_\circ$ in $\bW$ as $n\to \infty$. 
Let $x^n(t)$, for $t\in [0,T]$, denote the solution of \eqref{pf:xit} with initial datum $x(0) = x^n_\circ$, and let $\bmu_n(t)$ denote the corresponding empirical measure. Note that the sequence $\bmu_n: [0,T] \to \mathcal P (\mathbb T^d \times \{\pm1\})$ satisfies all the assumptions of the refined version of the Ascoli-Arzel\`a Theorem \cite[Proposition 3.3.1]{AmbrosioGigliSavare08}. Indeed the metric space $(\mathcal P (\mathbb T^d \times \{\pm1\}), \bW)$ is complete, and $\mathcal P (\mathbb T^d \times \{\pm1\})$ is sequentially compact with respect to $\bW$ (note that the domain $\T^d$ is bounded, so the convergence in measure and in $\bW$ are equivalent). Moreover, the map $t \mapsto \bmu_n(t)$ is Lipschitz continuous with respect to $\bW$, with Lipschitz constant independent of $n$ (but depending on $\delta$). To see this first note that, from the equations~\eqref{f:GB-regularized-repeated} satisfied by $\mu_n^\pm$ we have that $\mu_n^\pm(s)(\T^d)=\mu_n^\pm(t)(\T^d)$ for every $s\leq t$, hence, by Proposition \ref{p:bW}, it is sufficient to show that $t\mapsto \mu_n^\pm(t)$ are Lipschitz continuous with respect to $W$. The latter follows since for every $s\leq t$ we have 
\begin{align*}
W^2(\mu_n^\pm(s),\mu_n^\pm(t)) &= \frac1n \inf_{\sigma\in \mathcal{S}_n} \sum_{\substack{{i=1}\\{b_i=\pm1}}}^n\|x_i(s) - x_{\sigma(i)}(t)\|^2_{\Td} \leq 
\frac1n \sum_{\substack{{i=1}\\{b_i=\pm1}}}^n\|x_i(s) - x_i(t)\|^2_{\Td}\\
&\leq \left(\big(\|\nabla V_\delta\|_{L^\infty(\Td)} + \|\nabla U\|_{L^\infty(\Td)}\big) |t-s| \right)^2,
\end{align*}
where we used (V4), and $\mathcal{S}_n$ denotes the set of permutations of $\{1,\dots,n\}$.

By the refined version of the Ascoli-Arzel\`a Theorem we conclude that there exists a time-independent subsequence of $(\bmu_n)$ (not relabelled) and a limit curve $\bmu_\delta = (\mu^+_\delta, \mu^-_\delta): [0,T] \to  \mathcal P (\mathbb T^d \times \{\pm1\})$ such that 
\begin{equation}\label{e:Wnd}
\bW(\bmu_n(t),\bmu_\delta(t)) \to 0 \quad \textrm{for every } \ t\in [0,T],
\end{equation}
and $\bmu_\delta$ is $\bW$-continuous in $[0,T]$. 

We show that $\bmu_\delta$ is a solution of~\eqref{f:GB-regularized-repeated} with initial datum $\bmu_{\circ}$. First of all, $\bmu_n$ is a solution (in the sense of distributions) of~\eqref{f:GB-regularized-repeated} with initial datum $\bmu_{\circ,n}$, namely
\begin{equation}\label{eqn:mun}
\int_0^T\int_{\T^d}\big(\partial_t \varphi(t,x) \mp \nabla \varphi(t,x) \cdot  \nabla (V_\delta*\kappa_n + U)(t,x)\big) d \mu_n^{\pm} (t,x) dt + \int_{\T^d} \varphi(0,x) d \mu_{\circ,n}^{\pm}(x) =0
\end{equation}
for every $\varphi\in C_c^\infty ([0,T)\times \T^d)$, where $\kappa_n=\mu_n^+-\mu_n^-$. Since by construction $\bW(\bmu_{\circ,n},\bmu_\circ) \to 0$, and ${\bW}(\bmu_n(t),\bmu_\delta(t)) \to 0$ for every $t\in [0,T]$ by \eqref{e:Wnd}, we can immediately pass to the limit in the first and last term of \eqref{eqn:mun} (note that convergence in $\bW$ implies $\textrm{weak}^\ast$-convergence of the positive and negative components). For the nonlinear term, we pass to the limit pointwise in $t\in (0,T)$, and conclude afterwards by applying the Dominated Convergence Theorem over the time integral. The spatial integral reads as
\[
  \iint_{(\T^d)^2} \nabla \varphi(x) \cdot \nabla V_\delta (x - y) \, d (\mu_n^+ \otimes \mu_n^{\pm}) (y,x) - \iint_{(\T^d)^2} \nabla \varphi(x) \cdot \nabla V_\delta (x - y) \, d (\mu_n^- \otimes \mu_n^{\pm}) (y,x).
\]
Since $(x,y) \mapsto \nabla \varphi(x) \cdot \nabla V_\delta (x - y)$ is continuous on $(\T^d)^2$, we can directly pass to the limit $n \to \infty$.
\smallskip

\textit{Step 2: Gronwall inequality.}  Let now $\bmu_\delta, \bnu_\delta \in C([0,T]; \mathcal P (\mathbb T^d \times \{\pm1\}))$ be solutions of \eqref{f:GB-regularized-repeated} with initial data $\bmu_\circ, \bnu_\circ \in \mathcal P (\mathbb T^d \times \{\pm1\})$ satisfying $\mu_\circ^\pm(\T^d) = \nu_\circ^\pm(\T^d)$. From the equations \eqref{f:GB-regularized-repeated} we see that $\mu_\delta^\pm(t)(\T^d) = \mu_\circ^\pm(\T^d)$ and $\nu_\delta^\pm(t)(\T^d) = \nu_\circ^\pm(\T^d)$ for every $t\in [0,T]$, so in particular $\mu_\delta^\pm(t)(\T^d) = \nu_\delta^\pm(t)(\T^d)$ for every $t$. By Proposition \ref{p:bW} we have that
\begin{align*}
\bW^2 \big(\bmu_\circ, \bnu_\circ \big) &= W^2 \big( \mu^+_\circ, \nu^+_\circ \big) + W^2 \big( \mu^-_\circ, \nu^-_\circ \big)\\
\bW^2 \big(\bmu_\delta(t), \bnu_\delta(t)) &= W^2 \big( \mu^+_\delta(t), \nu^+_\delta(t) \big) + W^2 \big( \mu^-_\delta(t), \nu^-_\delta(t) \big).
\end{align*}

Since the curves $\mu_\delta^\pm, \nu^\pm_\delta$ are continuous in time, we have by $(V4)$ that the fluxes $v^\pm[\bmu_\delta]$ and $v^\pm[\bnu_\delta]$ from the continuity equations \eqref{f:GB-regularized-repeated}, defined by 
$$
v^-[\bmu]:= \nabla V_\delta\ast (\mu^+-\mu^-) + \nabla U, \quad v^+[\bmu] = - v^-[\bmu], \quad  \bmu=(\mu^+,\mu^-) \in \P (\T^d \times \{\pm 1\}),
$$
are continuous in time. They are moreover Lipschitz continuous in space, since (omitting $t$)
\begin{align}\label{e:fieldsLip}
|{v}^\pm[\bmu_\delta](x) - {v}^\pm[\bmu_\delta](y)| 
\leq  (\lambda_\delta + \|D^2U\|_{L^\infty(\T^d)})\ \|x-y\|_{\T^d}. 
\end{align}
Hence, Lemma \ref{lem:dWdt} applies. This yields that for every $\bsigma \in \P (\T^d \times \{\pm 1\})$ with $\sigma^\pm (\T^d) = \mu_\circ^\pm(\T^d)$,
\begin{align}\label{one-fixed-1}
\frac{d}{dt} W^2(\mu^\pm_\delta(t), \sigma^\pm) &= 2\int_{\T^d \times \T^d} \big( x - y + k(x,y) \big) \cdot{v}^\pm[\bmu_\delta](x,t) d\gamma_1^\pm(x,y),\\\label{one-fixed-2}
\frac{d}{dt} W^2(\nu^\pm_\delta(t), \sigma^\pm) &= 2\int_{\T^d \times \T^d} \big( x - y + k(x,y) \big) \cdot{v}^\pm[\bnu_\delta](x,t) d\gamma_2^\pm(x,y),
\end{align} 
where $\gamma_1^\pm \in \Gamma_\circ(\mu_\delta^\pm, \sigma^\pm)$ and $\gamma_2^\pm \in \Gamma_\circ(\nu_\delta^\pm, \sigma^\pm)$. 
Combining \eqref{one-fixed-1} and \eqref{one-fixed-2} we deduce that 
\begin{align}\label{combined}
\frac{d}{dt} W^2(\mu^\pm_\delta(t), \nu^\pm_\delta(t)) 
\leq 2\int_{\T^d \times \T^d} \big( x - y + k(x,y) \big)\cdot \big( {v}^\pm[\bmu_\delta](x,t) - {v}^\pm[\bnu_\delta](y,t) \big) d\gamma^\pm(x,y), 
\end{align}
where $\gamma^\pm \in \Gamma_\circ(\mu_\delta^\pm(t), \nu_\delta^\pm(t))$ and $k(x,y) = -k(y,x)$ is used. We now observe that (omitting $t$)
\begin{align*}
&|{v}^\pm[\bmu_\delta](x) - {v}^\pm[\bnu_\delta](y)| 
\leq |{v}^\pm[\bmu_\delta](x) - {v}^\pm[\bmu_\delta](y)| + |{v}^\pm[\bmu_\delta](y) - {v}^\pm[\bnu_\delta](y)| \\
&\qquad  \leq (\lambda_\delta + \|D^2U\|_{L^\infty(\T^d)})\ \|x-y\|_{\T^d} + \lambda_\delta (W(\mu^+_\delta(t),\nu_\delta^+(t)) + W(\mu^-_\delta(t),\nu_\delta^-(t))),
\end{align*}
where we have used the estimate
\begin{align*}
|{v}^\pm[\bmu_\delta](y) - {v}^\pm[\bnu_\delta](y)| &= \left|\int_{\T^d} \nabla V_\delta(x-y) d\big((\mu^+_\delta-\mu^-_\delta)-(\nu^+_\delta-\nu^-_\delta)\big)(y) \right| \\
&= \left|\int_{\T^d} \nabla V_\delta(x-y) d\big((\mu_\delta^+-\nu^+_\delta)-(\mu^-_\delta-\nu^-_\delta)\big)(y) \right|\\
&\leq \lambda_\delta \big(W_1(\mu^+_\delta,\nu_\delta^+) + W_1(\mu^-_\delta,\nu_\delta^-)\big)\\
&\leq \lambda_\delta \big(W(\mu^+_\delta,\nu_\delta^+) + W(\mu^-_\delta,\nu_\delta^-)\big),
\end{align*}
and $W_1$ denotes the 1-Wasserstein\footnote{Recall the characterisation of the 1-Wasserstein distance as
$$
W_1(\mu,\nu)=\sup\left\{\int_{\T^d} f d\mu - \int_{\T^d} f d\nu \, : \ f \ 1\hbox{-Lipschitz} \right\}
$$
} distance. Hence, from \eqref{combined} we obtain the bound
\begin{align*}
\frac{d}{dt} W^2(\mu^\pm_\delta(t), \nu^\pm_\delta(t)) &\leq 2(\lambda_\delta + \|D^2U\|_{L^\infty(\T^d)}) \int_{\T^d\times \T^d} \|x- y\|^2_{\T^d} d\gamma^\pm(x,y) \\ 
& + 2  \lambda_\delta  \big(W(\mu^+_\delta,\nu_\delta^+) + W(\mu^-_\delta,\nu_\delta^-)\big) \int_{\T^d\times \T^d}  \|x- y\|_{\T^d} d\gamma^\pm(x,y)\\
&\leq 2(\lambda_\delta + \|D^2U\|_{L^\infty(\T^d)}) W^2(\mu^\pm_\delta(t), \nu^\pm_\delta(t))  \\ 
&+ 2 \lambda_\delta \big(W(\mu^+_\delta,\nu_\delta^+) + W(\mu^-_\delta,\nu_\delta^-)\big) W(\mu^\pm_\delta(t), \nu^\pm_\delta(t)).
\end{align*}
Adding up the estimates for the positive and the negative parts of the measures we conclude that 
\begin{align*}
\frac{d}{dt} \bW^2(\bmu_\delta(t), \bnu_\delta(t))  &\leq 2(\lambda_\delta + \|D^2U\|_{L^\infty(\T^d)}) \bW^2(\bmu_\delta(t), \bnu_\delta(t)) \\
&+  2 \lambda_\delta \big(W(\mu^+_\delta,\nu_\delta^+) + W(\mu^-_\delta,\nu_\delta^-)\big)^2\\
& \leq 2(3\lambda_\delta + \|D^2U\|_{L^\infty(\T^d)}) \bW^2(\bmu_\delta(t), \bnu_\delta(t)).
\end{align*}
By using the Gronwall inequality, we obtain the final estimate 
$$
\bW(\bmu_\delta(t), \bnu_\delta(t)) \leq \exp\big(3\lambda_\delta + \|D^2U\|_{L^\infty(\T^d)}\big) \bW(\bmu_\circ, \bnu_\circ),
$$
which gives \eqref{f:Gronwall00}.
\end{proof}

We now state the convergence of the regularised solution $\brho_{\delta}$, whose existence and uniqueness are established in Lemma \ref{l:dist-bmu-brho_n}, to a solution $\brho$ of~\eqref{f:GB-repeated}. 
\medskip

\begin{thm}\label{thm:delta_to_0} Let $\delta, T>0$ be fixed. 
Let $\brho_\circ, \brho_{\circ,\delta} \in \big( L \log L (\mathbb T^d) \big)^2 \cap \mathcal P (\mathbb T^d \times \{\pm1\})$ be such that $\brho_{\circ,\delta}\xweakto \ast \brho_\circ$ in measure as $\delta\to 0$, and $\Ent(\brho_{\circ,\delta})\leq c$ uniformly in $\delta$. Let $\brho_\delta \in  C([0,T]; \mathcal P (\mathbb T^d \times \{\pm1\}))$ be the solution of the regularised equation~\eqref{f:GB-regularized-repeated} with initial datum $\brho_{\circ,\delta}$.

 Then the limit $\brho:=\lim_{\delta\to 0}\brho_\delta$ exists (up to a subsequence) with respect to the $\textrm{weak}^\ast$ convergence in $L^\infty(0,T; (L\log L (\T^d))^2 )$, and $\brho$ is a solution of the Groma-Balogh equations~\eqref{f:GB-repeated} in the sense of Definition~\ref{def:GB-sol-isotropic} with initial datum $\brho_\circ$.
\end{thm}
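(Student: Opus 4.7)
The proof follows the functional-analytic strategy of \cite{CannoneEl-HajjMonneauRibaud10} and extends it to the broader class of potentials considered here. The four ingredients are a uniform entropy/dissipation estimate, weak-$\ast$ compactness in the Orlicz setting of Lemma \ref{l:product-term}, identification of the limit of the nonlinear field via (V3), and passage to the limit in the weak formulation.

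\emph{A priori estimates.} Testing \eqref{f:GB-regularized-repeated} formally against $\log \rho_\delta^\pm$ and summing, with $\kappa_\delta := \rho_\delta^+ - \rho_\delta^-$, an integration by parts produces
\begin{equation*}
\frac{d}{dt} \Ent(\brho_\delta(t)) = -\int_{\T^d} \kappa_\delta \, (\Delta V_\delta * \kappa_\delta) \, dx - \int_{\T^d} \kappa_\delta \, \Delta U \, dx.
\end{equation*}
Lemma \ref{l:V:props} bounds the first integral from below by $c\|\nabla V_\delta * \kappa_\delta\|_{H^{d/2}}^2$ and the second is bounded by $\|\Delta U\|_\infty$. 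Combined with the hypothesis $\Ent(\brho_{\circ,\delta}) \leq c$, an integration in $t$ yields
\begin{equation*}
\sup_{t \in [0,T]} \Ent(\brho_\delta(t)) + c\int_0^T \|\nabla V_\delta * \kappa_\delta\|_{H^{d/2}(\T^d)}^2 \, dt \leq C,
\end{equation*}
uniformly in $\delta$. By de la Vall\'ee Poussin's criterion and Banach-Alaoglu we extract a subsequence such that $\rho_\delta^\pm \xweakto{*} \rho^\pm$ weakly-$\ast$ in $L^\infty(0,T; L \log L(\T^d))$ and $\nabla V_\delta * \kappa_\delta \weakto \bw$ weakly in $L^2(0,T; H^{d/2}(\T^d; \R^d))$; lower semicontinuity of $\Ent$ places the limit in the same Orlicz class. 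To identify $\bw$, for a test field $\boldsymbol{\psi} \in C_c^\infty((0,T) \times \T^d; \R^d)$ the evenness (V4) of $V_\delta$ gives
\begin{equation*}
\int_0^T\!\!\int_{\T^d} (\nabla V_\delta * \kappa_\delta) \cdot \boldsymbol{\psi} \, dx\, dt = -\int_0^T\!\!\int_{\T^d} \kappa_\delta \, (V_\delta * \div \boldsymbol{\psi}) \, dx\, dt;
\end{equation*}
by (V3) $V_\delta * \div \boldsymbol{\psi} \to V * \div \boldsymbol{\psi}$ strongly in $L^1(0,T; \Exp(\T^d))$, so the $L\log L$-$\Exp$ duality of Lemma \ref{l:product-term} allows us to pass to the limit on the right and conclude $\bw = \nabla V * \kappa$ with $\kappa = \rho^+ - \rho^-$, in the distributional sense of Definition \ref{def:GB-sol-isotropic}.

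\emph{Passage to the limit in the weak formulation.} The linear terms (time derivative, external potential, initial datum) converge directly from the weak-$\ast$ convergence of $\brho_\delta$ and the assumption $\brho_{\circ,\delta} \to \brho_\circ$. The nonlinear term
\begin{equation*}
\int_0^T\!\!\int_{\T^d} \rho_\delta^\pm \, \nabla \varphi \cdot (\nabla V_\delta * \kappa_\delta) \, dx\, dt
\end{equation*}
is a weak-weak product and requires an extra compactness. The uniform flux bound $\rho_\delta^\pm (\nabla V_\delta * \kappa_\delta + \nabla U) \in L^1((0,T) \times \T^d)$, obtained by combining Step~1 with the Orlicz-H\"older inequality of Lemma \ref{l:product-term}, shows that $\partial_t \rho_\delta^\pm$ is bounded in $L^1(0,T; W^{-1,1}(\T^d))$. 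An Aubin-Lions-type argument then upgrades the weak convergence of $\rho_\delta^\pm$ to strong convergence in $C([0,T]; H^{-\sigma}(\T^d))$ for some $\sigma$ chosen large enough to embed $L^1$ into $H^{-\sigma}$. Splitting $\rho_\delta^\pm = (\rho_\delta^\pm - \rho^\pm) + \rho^\pm$, the difference contributes $o(1)$ through the strong-weak pairing against $\nabla V_\delta * \kappa_\delta$, which is bounded in $L^2(0,T; H^{d/2})$, whereas the remaining piece converges by the weak convergence $\nabla V_\delta * \kappa_\delta \weakto \nabla V * \kappa$ tested against the fixed function $\rho^\pm \nabla \varphi \in L^\infty(0,T; L \log L)$ via the $L\log L$-$\Exp$ duality. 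This shows that $\brho$ is a solution of \eqref{f:GB-repeated} in the sense of Definition \ref{def:GB-sol-isotropic}, with initial datum $\brho_\circ$.

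The main obstacle is the passage to the limit in the nonlinear term: neither $\rho_\delta^\pm$ nor $\nabla V_\delta * \kappa_\delta$ is known to converge strongly in the natural $L^p$ scale, and only the combined use of the entropy estimate (supplying equi-integrability), the Orlicz duality of Lemma \ref{l:product-term}, and the time-compactness extracted from the continuity equation restores the strong-weak structure needed to identify the limit as the product of the limits. The interplay of (V1)-(V2) with Lemma \ref{l:V:props}, and of (V3) with the $L \log L$-$\Exp$ pairing, is precisely what makes this identification work.
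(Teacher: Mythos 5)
Your overall architecture (entropy/dissipation estimate, weak-$\ast$ compactness in $L^\infty(0,T;L\log L(\T^d))$, identification of the limiting field via (V3), passage to the limit in the weak formulation) is the same as the paper's, and your identification of $\bw=\nabla V*\kappa$ and your treatment of the linear terms are correct. A minor point first: the entropy inequality cannot be obtained by testing \eqref{f:GB-regularized-repeated} against $\log\rho_\delta^\pm$ directly, since the solution of Lemma~\ref{l:dist-bmu-brho_n} is a priori only a continuous curve of probability measures; the paper justifies the estimate (Lemma~\ref{l:entropy-estimate}) through a viscous approximation $\varepsilon\Delta\rho^\pm$ with smoothed strictly positive initial data, a comparison principle to preserve positivity, and the limit $\varepsilon\to0$ combined with uniqueness. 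You acknowledge the computation is formal, but this regularisation is a genuine part of the argument, not a formality.

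The substantive gap is in the nonlinear term. You place the time-compactness on the \emph{density} and obtain $\rho_\delta^\pm\to\rho^\pm$ strongly in $C([0,T];H^{-\sigma}(\T^d))$ with $\sigma>d/2$ (strict inequality is forced by your choice of $X=H^{-\sigma}$ with $L^1\hookrightarrow H^{-\sigma}$ compact). But the field $\nabla V_\delta*\kappa_\delta$ is only bounded in $L^2(0,T;H^{d/2}(\T^d))$, so the pairing of $(\rho_\delta^\pm-\rho^\pm)\nabla\varphi$ with it requires strong convergence of the former in $L^2(0,T;H^{-d/2})$. Interpolating between strong convergence in $H^{-\sigma}$, $\sigma>d/2$, and uniform boundedness in $H^{-d/2}$ yields strong convergence in $H^{-s}$ only for $s>d/2$, never at the endpoint $s=d/2$; this endpoint is precisely where the difficulty of the problem sits (the same borderline as $H^{d/2}\not\hookrightarrow L^\infty$), so the strong--weak pairing as you set it up does not close. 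The paper resolves this by compactifying the \emph{field} instead: taking the difference of the two equations and convolving with $\partial_i V_\delta$, it bounds $\partial_t(\nabla V_\delta*\kappa_\delta)$ in $L^2(H^{-(d+1)})$ and applies Lemma~\ref{l:ALS} to the triple $H^{d/2}(\T^d)\subset\subset\EExp(\T^d)\subset H^{-(d+1)}(\T^d)$, obtaining \emph{strong} convergence of $\nabla V_\delta*\kappa_\delta$ in $L^2(0,T;\EExp(\T^d))$; the product with $\rho_\delta^\pm$, merely bounded in $L^\infty(L\log L)$, is then handled by the Orlicz--H\"older inequality of Lemma~\ref{l:Orlicz}\,\ref{l:O:Hol:L1}. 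If you wish to keep the compactness on the density, you would need the compact embedding $L\log L(\T^d)\subset\subset H^{-d/2}(\T^d)$ (the Schauder adjoint of $H^{d/2}\subset\subset\EExp$) in the Aubin--Lions step --- i.e., the Orlicz machinery again, not the crude embedding $L^1\hookrightarrow H^{-\sigma}$, $\sigma>d/2$.
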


In what follows we use the shorthand $L^\infty( L \log L )$ for the space $L^\infty(0,T; L\log L (\T^d) )$, and adopt this convention for any function space on $(0,T) \times \T^d$.

\smallskip

The next lemma provides an a priori estimate for $(\brho_\delta)$ (uniformly in $\delta$), in terms of the entropy defined in \eqref{def:entropy}.

\begin{lem}[A priori estimate for $(\brho_\delta)$]
\label{l:entropy-estimate}
Let $d\geq 1$, and let $\delta, T>0$ be fixed. Let $\brho_{\circ,\delta}\in \big( L \log L (\mathbb T^d) \big)^2 \cap \mathcal P (\mathbb T^d \times \{\pm1\})$, and let $\brho_\delta = (\rho^+_\delta,\rho^-_\delta)\in C([0,T]; \mathcal P (\mathbb T^d \times \{\pm1\}))$ be the solution of \eqref{f:GB-regularized-repeated} given by Lemma~\ref{l:dist-bmu-brho_n} with initial datum $\brho_{\circ,\delta}$. Then $\brho_\delta \in L^\infty(0,T; (L \log L(\T^d))^2 )$, and for all $t\in [0,T]$ we have
\begin{equation}
\label{est:entropy}
 \Ent(\brho_\delta(t)) + c \int_0^t \| \nabla V_\delta * ( \rho^+_\delta-\rho^-_\delta)(\tau) \|_{H^{d/2} (\mathbb T^d)}^2\, d\tau 
 \leq  C + \Ent(\brho_{\circ,\delta}),
\end{equation}
where $c > 0$ and $C \geq 0$ are constants independent of $\delta$.
\end{lem}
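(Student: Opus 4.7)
The plan is to derive the estimate from a formal chain-rule computation of $\frac{d}{dt}\Ent(\brho_\delta(t))$, identify the dissipation term as $-\int \kappa_\delta\,(\Delta V_\delta \ast \kappa_\delta)\,dx$ with $\kappa_\delta = \rho_\delta^+ - \rho_\delta^-$, and then invoke Lemma~\ref{l:V:props} to convert this into the claimed $H^{d/2}$-bound on $\nabla V_\delta \ast \kappa_\delta$.

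First, I would establish that for fixed $\delta>0$ the solution $\brho_\delta$ furnished by Lemma~\ref{l:dist-bmu-brho_n} actually lies in $L^\infty(0,T;(L\log L(\T^d))^2)$ when the initial datum does. This is where some care is needed, since the solution in Lemma~\ref{l:dist-bmu-brho_n} was constructed via empirical measures. The point is that, for fixed $\delta>0$, the velocity field $v[\brho_\delta] = \nabla V_\delta \ast \kappa_\delta + \nabla U$ is bounded and Lipschitz in $x$ uniformly in $t$ (by $(V4)$ and $(U)$), so the characteristic flow is well-defined and the two equations in \eqref{f:GB-regularized-repeated} are linear continuity equations with Lipschitz drift, which preserve absolute continuity and propagate $L\log L$-bounds. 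Uniqueness in Lemma~\ref{l:dist-bmu-brho_n} then identifies this absolutely continuous solution with $\brho_\delta$.

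Next, assuming first that $\rho_{\circ,\delta}^\pm$ are smooth and bounded away from zero (so that $\log \rho_\delta^\pm$ is smooth), I would differentiate the entropy:
\begin{align*}
\frac{d}{dt}\Ent(\rho_\delta^+)
&=\int_{\T^d}(\log\rho_\delta^+ + 1)\,\partial_t\rho_\delta^+\,dx
=\int_{\T^d}(\log\rho_\delta^+ + 1)\,\div\bigl(\rho_\delta^+\,v[\brho_\delta]\bigr)\,dx\\
&=-\int_{\T^d}\nabla\rho_\delta^+\cdot v[\brho_\delta]\,dx,
\end{align*}
and analogously $\frac{d}{dt}\Ent(\rho_\delta^-)=\int_{\T^d}\nabla\rho_\delta^-\cdot v[\brho_\delta]\,dx$, by the opposite sign in the second equation of \eqref{f:GB-regularized-repeated}. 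Summing and integrating by parts once more gives the key identity
\begin{equation*}
\frac{d}{dt}\Ent(\brho_\delta)
=-\int_{\T^d}\nabla\kappa_\delta\cdot v[\brho_\delta]\,dx
=\int_{\T^d}\kappa_\delta\,(\Delta V_\delta \ast \kappa_\delta)\,dx
+\int_{\T^d}\kappa_\delta\,\Delta U\,dx.
\end{equation*}
By Lemma~\ref{l:V:props} the first term on the right is bounded above by $-c\,\|\nabla V_\delta \ast \kappa_\delta\|_{H^{d/2}(\T^d)}^2$, and the second is controlled by $\|\Delta U\|_{L^\infty}\,|\kappa_\delta|(\T^d)\le \|\Delta U\|_{L^\infty}$, since $\rho_\delta^+(\T^d)+\rho_\delta^-(\T^d)=1$ is conserved in time. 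Integrating from $0$ to $t\in[0,T]$ yields \eqref{est:entropy} with $C=T\|\Delta U\|_{L^\infty(\T^d)}$.

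The main obstacle is the rigorous justification of the chain-rule computation under the sole hypothesis $\brho_{\circ,\delta}\in (L\log L)^2$, since $\log\rho_\delta^\pm$ need not be integrable a priori. The standard remedy, which I would carry out, is a double regularisation: mollify $\rho_{\circ,\delta}^\pm$ by a smooth kernel and add a small constant $\varepsilon>0$ so that the regularised data are smooth and bounded below; run the above computation for the corresponding (smooth, classical) solutions, for which every step is justified; and then pass to the limit as the mollification parameter and $\varepsilon$ tend to zero, using lower semicontinuity of $\Ent$ and of the $H^{d/2}$-seminorm together with the $\bW$-stability from Lemma~\ref{l:dist-bmu-brho_n} to identify the limit with $\brho_\delta$. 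Alternatively one may replace $s\log s$ by the regularised entropy $s\log(s+\varepsilon) - \varepsilon\log(s+\varepsilon)$, perform the computation, and let $\varepsilon\downarrow 0$; this bypasses the need to mollify the initial datum and only uses the already-established $L\log L$ propagation.
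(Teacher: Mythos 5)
Your formal computation is exactly the one the paper performs, and your plan for the limit passage (lower semicontinuity of the entropy and of the $H^{d/2}$-norm, identification of the limit via the uniqueness statement of Lemma~\ref{l:dist-bmu-brho_n}) also matches the paper's Step~2. The gap is in the regularisation device you propose to justify the chain rule. Mollifying the initial datum and adding a constant $\varepsilon$ does \emph{not} produce a smooth classical solution of \eqref{f:GB-regularized-repeated}: this is a first-order transport system, so the solution is only as regular as the velocity field allows, and under (V4) the field $v=\nabla V_\delta\ast\kappa_\delta+\nabla U$ is merely $W^{1,\infty}$ in $x$ (its divergence $\Delta V_\delta\ast\kappa_\delta+\Delta U$ is bounded and continuous but need not be differentiable). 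Consequently $\rho_\delta^\pm(t)$ need not be $C^1$, $\nabla\rho_\delta^\pm$ need not exist as a function, and the pairing of $\log\rho_\delta^\pm+1$ with the first-order distribution $\div(\rho_\delta^\pm v)$ in your display is not defined. Your second alternative (replacing $s\log s$ by $s\log(s+\varepsilon)-\varepsilon\log(s+\varepsilon)$) does not help either, because the obstruction is not the singularity of the logarithm at $0$ but the missing spatial regularity of $\rho_\delta^\pm$ needed for the two integrations by parts.

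The paper resolves this by a \emph{viscous} approximation: it adds $\varepsilon\Delta\rho^\pm$ to both equations (and smooths and lifts the initial datum, as you do). Parabolic regularity then yields genuinely smooth solutions, the comparison principle yields strict positivity, and the extra term in the entropy computation is the dissipative contribution $-\varepsilon\int|\nabla\rho^{\varepsilon,\pm}_\delta|^2/\rho^{\varepsilon,\pm}_\delta\le 0$, which is simply discarded; the rest of the computation and the passage $\varepsilon\to0$ proceed as you describe. If you want to avoid viscosity, a workable alternative (different from what you wrote) is Lagrangian: for Lipschitz $v$ the flow map $X_t$ is bi-Lipschitz with a.e.\ Jacobian $\exp\bigl(\int_0^t\div v(s,X_s(x))\,ds\bigr)$, and the push-forward representation of $\rho_\delta^\pm$ gives $\frac{d}{dt}\Ent(\rho_\delta^\pm)=-\int\rho_\delta^\pm\div v^\pm\,dx$ directly, with no derivatives of $\rho_\delta^\pm$ appearing; but as written your Eulerian argument does not close.
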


\begin{proof}[Proof of Lemma~\ref{l:entropy-estimate}] We split the proof into two steps. In the first step we prove the claim for a further regularisation of $\brho_\delta$, obtained by means of a viscous approximation; we then deduce the claim for $\brho_\delta$ in the limit for the viscous approximation parameter going to zero.
\smallskip

\textit{Step 1: Viscous approximation.} Let $\varepsilon>0$; we regularise the system \eqref{f:GB-regularized-repeated} by adding the terms $\varepsilon \Delta \rho^\pm$, namely we consider
\begin{equation} \label{f:GB-regularized-repeated-epsilon}
\begin{aligned}
&\partial_t \rho^+ 
= &\div \big( \rho^+ ( \nabla V_{\delta} * (\rho^+ - \rho^-) + \nabla U ) \big) + \varepsilon \Delta \rho^+,\\
&\partial_t \rho^- 
= - &\div \big( \rho^- ( \nabla V_{\delta} * (\rho^+ - \rho^-) + \nabla U) \big) + \varepsilon \Delta \rho^-,
\end{aligned}
\end{equation}
with a smoothed initial condition given by $\brho_{\circ,\delta}^\varepsilon := \varepsilon + (1 - \varepsilon) \eta_\varepsilon * \brho_{\circ,\delta}$, where $\eta_\varepsilon$ is a mollifier defined as $\eta_\varepsilon:= \varepsilon^{-d}\eta(\frac\cdot\varepsilon)$, and $\eta\in C^\infty(\T^d)$ is nonnegative and with $\int_{\T^d}\eta=1$. Note that the components of the regularised initial datum $\brho_{\circ,\delta}^\varepsilon$ are strictly positive. 
Since $\brho\mapsto \nabla V_{\delta} * (\rho^+ - \rho^-)$ is Lipschitz continuous (as shown in \eqref{e:fieldsLip}), and $U\in C^\infty_b(\T^d)$ by assumption (U), it follows from standard parabolic theory and a bootstrap argument  that \eqref{f:GB-regularized-repeated-epsilon} has a unique classical solution $\brho_{\delta}^\varepsilon = (\rho_\delta^{\varepsilon,+}, \rho_\delta^{\varepsilon,-}) \in (C^\infty ([0,T) \times \T^d))^2$. 
Moreover, the components $\rho_\delta^{\varepsilon, \pm}$ are strictly positive. This can be seen by applying the comparison principle (e.g., \cite[Cor.~2.5]{Lieberman96}) to the (uncoupled) linear parabolic equations 
\begin{equation} \label{f:GB-comparison}
\partial_t \rho^\pm \mp \div \big( \rho^\pm ( \nabla V_{\delta} * (\rho_\delta^{\varepsilon,+} - \rho_\delta^{\varepsilon,-}) + \nabla U ) \big) - \varepsilon \Delta \rho^\pm = 0.
\end{equation}
Indeed, $\rho_\delta^{\varepsilon, \pm}$ is clearly a solution to \eqref{f:GB-comparison}, with initial datum $\rho_{\circ,\delta}^{\pm,\varepsilon}$; moreover, the comparison function
\begin{equation}\label{comparison_function}
  t \mapsto \Big( \inf_{\T^d} \rho_{\circ,\delta}^{\varepsilon, \pm} \Big) \exp \big( - \| \Delta V_{\delta} * (\rho^{\varepsilon,+}_\delta-\rho^{\varepsilon,-}_\delta) + \Delta U \|_{L^\infty (\T^d)} t \big)
\end{equation}
is strictly positive, and is a sub-solution of \eqref{f:GB-comparison} with initial datum below $\rho_{\circ,\delta}^{\pm,\varepsilon}$. Therefore $\rho_\delta^{\varepsilon, \pm}$ is above the comparison function \eqref{comparison_function} in $[0,T) \times \T^d$, and hence is strictly positive.

By differentiating in time the entropy of $\rho_\delta^{\varepsilon,+}$, and by using the $(\varepsilon,\delta)$-regularised equations~\eqref{f:GB-regularized-repeated-epsilon}, we have, by \eqref{def:entropy},
\begin{align*}
\frac{d}{dt} \Ent(\rho^{\varepsilon,+}_\delta) &= \frac{d}{dt}  \int_{\T^d} \rho_\delta^{\varepsilon,+}\log \rho_\delta^{\varepsilon,+} dx =  \int_{\T^d} (1+\log \rho^{\varepsilon,+}_\delta)\partial_t \rho_\delta^{\varepsilon,+} dx\\
& = \int_{\T^d} (1+\log \rho_\delta^{\varepsilon,+}) \Big(\div \big( \rho^{\varepsilon,+}_\delta ( \nabla V_{\delta} * (\rho^{\varepsilon,+}_\delta - \rho^{\varepsilon,-}_\delta) + \nabla U ) \big) + \varepsilon \Delta \rho^{\varepsilon,+}_\delta \Big) dx\\
& = - \int_{\T^d} \nabla \rho^{\varepsilon,+}_\delta\cdot \nabla (V_{\delta} * \kappa^{\varepsilon}_\delta + U) dx - \varepsilon \int_{\T^d}\frac{\big|\nabla \rho^{\varepsilon,+}_\delta\big|^2}{\rho^{\varepsilon,+}_\delta} \, dx\\
& \leq \int_{\T^d} \rho^{\varepsilon,+}_\delta \Delta (V_{\delta} * \kappa^{\varepsilon}_\delta + U) dx,
\end{align*}
where $\kappa^\varepsilon_\delta =\rho^{\varepsilon,+}_\delta-\rho^{\varepsilon,-}_\delta$. Proceeding in the same way for the entropy of $\rho^{\varepsilon,-}_\delta$, and adding up the resulting expressions, we have 
\begin{align*}
\frac{d}{dt} \Ent(\brho_\delta^\varepsilon) \leq \int_{\T^d} \kappa^\varepsilon_\delta\ \Delta \big(V_{\delta} * \kappa^\varepsilon_\delta + U \big) dx 
\leq - c \| \nabla V_\delta *\kappa^\varepsilon_\delta \|_{H^{d/2} (\mathbb T^d)}^2  +  \int_{\T^d} \Delta U \kappa^\varepsilon_\delta\, dx,
\end{align*}
where in the last step we have used Lemma \ref{l:V:props}, which provides a constant $c>0$ independent of $\delta$ and $\varepsilon$. By integrating the previous inequality over $[0,t]$, for $0 \leq t \leq T$, we finally conclude that 
\begin{equation}\label{claim-eps-0}
 \Ent(\brho^\varepsilon_\delta(t)) - \Ent(\brho^\e_{\circ, \delta}) 
 \leq - c\int_0^t \|\nabla V_{\delta}\ast \kappa^\varepsilon_\delta(\tau)\|^2_{H^{d/2}(\T^d)}\, d\tau  + \int_0^t\int_{\T^d} \Delta U \kappa^\varepsilon_\delta(\tau) dx d\tau.
\end{equation}
Since $\|\kappa^\varepsilon_\delta\|_{L^1(\T^d)}\leq 1$ and $U\in C^2_b(\T^d)$, from \eqref{claim-eps-0} we deduce that 
\begin{equation}\label{claim-eps}
 \Ent(\brho^\varepsilon_\delta(t)) + c \int_0^t \|\nabla V_{\delta}\ast \kappa^\varepsilon_\delta(\tau)\|^2_{H^{d/2}(\T^d)}\, d\tau 
 \leq C + \Ent(\brho^\e_{\circ,\delta}),
\end{equation}
namely $\brho_\delta^\varepsilon$ satisfies the claim \eqref{est:entropy}. 
\medskip

\textit{Step 2: Claim for $\brho_\delta$.} From \eqref{claim-eps} and part~\ref{l:O:bdd} of Lemma~\ref{l:Orlicz} we conclude that  
$$
\sup_{t\in[0,T]} \|\rho^{\varepsilon,\pm}_\delta(t)\|_{L\log L(\T^d)} \leq c,
$$
with a constant independent of $\delta$ and $\varepsilon$. Hence there exists some $\tilde\rho^\pm_\delta \in L^\infty(L \log L)$ such that, up to a subsequence (not relabelled) 
\begin{equation}\label{comp:1-l}
\rho^{\varepsilon,\pm}_\delta \stackrel{\ast}{\rightharpoonup} \tilde\rho^\pm_\delta \,\, \text{in } \, L^\infty(L\log L)
\end{equation}
as $\varepsilon\to 0$, i.e., in duality with $L^1( \EExp)$ (see Lemma \ref{l:Orlicz}, part \ref{l:O:Exp:dual}). 

Next we pass to the limit $\e\to 0$ in \eqref{claim-eps}. The convergence \eqref{comp:1-l} and the convexity of $\Ent$ 
allow us to pass to the limit in the first term by lower semicontinuity. Similarly, the strong convergence of 
$\brho_{\circ,\delta}^\e$ to $\brho_{\circ,\delta}$ in $L\log L(\T^d)$ gives the convergence of $\Ent(\brho^\e_{\circ,\delta})$ to $\Ent(\brho_{\circ,\delta})$ 
as $\e\to 0$ (see, e.g., \cite[Lemma 5.3]{{CannoneEl-HajjMonneauRibaud10}}. Finally, the convolution term $\nabla V_{\delta}*\kappa^\e_\delta$ is bounded and hence weakly convergent in $L^2(H^{d/2})$. By \eqref{comp:1-l},
its limit is $\nabla V_{\delta}*(\tilde\rho^+_\delta-\tilde \rho^-_\delta)$. Collecting these convergence results, we obtain from \eqref{claim-eps}, for every $t\in [0,T]$, the estimate 
\begin{equation}
\label{est:entropy_tilde}
 \Ent(\tilde{\brho}_\delta(t)) + c \int_0^t \|\nabla V_{\delta}*(\tilde\rho^+_\delta-\tilde \rho^-_\delta)(\tau)\|^2_{H^{d/2}(\T^d)}\, d\tau 
 \leq  C + \Ent(\brho_{\circ,\delta}).
\end{equation}
Finally, we prove that $\tilde\rho^\pm_\delta$ is a solution of \eqref{f:GB-regularized-repeated} with initial datum $\brho_{\circ,\delta}$. This follows by letting $\varepsilon\to 0$ in the weak form of the $(\varepsilon,\delta)$-regularised equations~\eqref{f:GB-regularized-repeated-epsilon} satisfied by $\brho_\delta^\e$, proceeding similarly as in Step 1 of the proof of Lemma \ref{l:dist-bmu-brho_n}.  By the uniqueness result in Lemma \ref{l:dist-bmu-brho_n} we conclude that $\tilde\rho^\pm_\delta = \rho^\pm_\delta$, and hence \eqref{est:entropy_tilde} gives the claim and concludes the proof.
\end{proof}

We are now ready to prove Theorem~\ref{thm:delta_to_0}, namely to connect the regularised solutions $(\brho_\delta)$ of~\eqref{f:GB-regularized-repeated}, with initial approximate datum $\brho_{\circ,\delta}$, with a solution of the unregularised Groma-Balogh equations with initial datum $\brho_\circ$.

\begin{proof}[Proof of Theorem~\ref{thm:delta_to_0}] Note that the existence of $\brho_{\circ,\delta} \in \big( L \log L (\mathbb T^d) \big)^2 \cap \mathcal P (\mathbb T^d \times \{\pm1\})$ such that $\brho_{\circ,\delta}\xweakto \ast \brho_\circ$ as $\delta\to 0$ in measure, and $\Ent(\brho_{\circ,\delta})\leq c$ uniformly in $\delta$, follows from Lemma \ref{lemm:approx}.
We split the proof into three steps.
\smallskip

\textit{Step 1: Compactness of $\brho_\delta$.} Since $\Ent(\brho_{\circ,\delta})\leq c$ uniformly in $\delta$, from  Lemma~\ref{l:entropy-estimate} and part~\ref{l:O:bdd} of Lemma~\ref{l:Orlicz}, we 
have that $\brho_\delta=(\rho^+_\delta, \rho^-_\delta)$ and $\kappa_\delta = \rho^+_\delta-\rho^-_\delta$ satisfy the following \emph{a priori} estimates:
\begin{equation}\label{apriori_rhon}
\sup_{t\in[0,T]} \|\rho^\pm_\delta(t)\|_{L\log L(\T^d)} \leq c \quad \text{and} \quad \int_0^T \|\nabla V_{\delta}*\kappa_\delta (t)\|_{H^{d/2}(\T^d)}^2 dt \leq c.
\end{equation}
By the first bound in \eqref{apriori_rhon} we can extract a subsequence (not relabelled) such that 
\begin{equation}\label{comp:1}
\rho^\pm_\delta \stackrel{\ast}{\rightharpoonup} \rho^\pm \,\, \text{in } \, L^\infty(L\log L),
\end{equation}
as $\delta\to 0$, i.e., in duality with $L^1( \EExp)$, for some $\rho^\pm \in L^\infty(L \log L)$.

Next we show that $\brho(t) \in \P(\T^d\times \{\pm1\})$ for almost all $t$. Let $\varphi \in C([0,T])$ be a test function (note that $\varphi\in L^1( \EExp)$, since $\EExp(\T^d)$ contains the constants). We compute
\begin{multline*}
\int_0^T \varphi(t) dt = \int_0^T\int_{\T^d} \Bigl(\varphi(t)\rho_\delta^+(t,x) + \varphi(t)\rho_\delta^-(t,x)\Bigr)\,dxdt\\
 \stackrel{\delta\to 0}{\longrightarrow} \int_0^T\int_{\T^d} \Bigl(\varphi(t)\rho^+(t,x) + \varphi(t)\rho^-(t,x)\Bigr)\,dxdt,
\end{multline*}
and thus $\int_{\T^d}(\rho^+(t,x)+\rho^-(t,x)) dx = 1$ for almost all $t$.
\smallskip

\textit{Step 2: Compactness of $\nabla V_\delta\ast\kappa_\delta$ in $L^2(\EExp)$.} 
From the second estimate in \eqref{apriori_rhon} we deduce that $\nabla V_{\delta}*\kappa_\delta$ is bounded in $L^2(H^{d/2})$, and hence, up to a subsequence, weakly convergent to a limit $f\in L^2(H^{d/2})$. We now show that the convergence is actually strong in $L^2(\EExp)$, and that $f = \nabla V\ast \kappa$, where $\kappa = \rho^+-\rho^-$.

We first note that $\rho_\delta^\pm\nabla V_{\delta}*\kappa_\delta$ is bounded in $L^2(L\log^{1/2}L)$. Indeed, by Lemma \ref{l:Orlicz}\,\ref{l:O:Hol:LlogL} and \ref{l:O:H1:Exp:ct} (integrated in time), and by \eqref{apriori_rhon},
\begin{align}\label{estimateM}
\bigg(\int_0^T\|\rho_\delta^\pm (t) \nabla V_{\delta} * \kappa_\delta(t) \|^2_{L\log^{1/2}L(\T^d)} dt \bigg)^{1/2}
&\leq C \|\rho_\delta^\pm\|_{L^\infty(L\log L)} \|\nabla V_{\delta} * \kappa_\delta\|_{L^2(H^{d/2})} 
\leq C.
\end{align}

Using \eqref{estimateM} and \eqref{apriori_rhon}, we show that $\partial_t \rho_\delta^\pm $ is bounded in $L^2(H^{-(d+1)})$. Indeed, by Lemma \ref{l:Orlicz}\,\ref{l:O:H1:Exp:ct}, and by using \eqref{f:GB-regularized-repeated}, we have that for any $\varphi\in C_c^\infty((0,T)\times \T^d)$
\begin{align}\label{stima:duale}
&\bigg| \int_0^T \int_{\T^d} \rho_\delta^\pm(t,x) \partial_t \varphi (t,x) dx dt \bigg|
\leq \int_0^T \int_{\T^d} \Big| \rho_\delta^\pm(t,x) \, \nabla \varphi(t,x) \cdot  \nabla (V_\delta*\kappa_\delta + U)(t,x) \Big| dx dt\nonumber\\
&\qquad \leq c\big(\|\nabla \varphi\|_{L^2(\Exp_2)}\|\rho^\pm_\delta\nabla V_{\delta}* \kappa_\delta\|_{L^2(L\log^{1/2}L)} + \|\nabla \varphi\|_{L^2(\Exp)} \|\rho^\pm_\delta\|_{L^\infty(L\log L)}\Big) \nonumber\\
&\qquad \leq c\|\varphi\|_{L^2(H^{d/2+1})}\leq c\|\varphi\|_{L^2(H^{d+1})}.
\end{align}

Using $\partial_t \kappa_\delta \in L^2(H^{-(d+1)})$ we show next that $\partial_t (\nabla V_{\delta} * \kappa_\delta)$ is bounded in $L^2(H^{-(d+1)})$. First, 
we note that by taking the difference of the equations \eqref{f:GB-regularized-repeated} we have that $\partial_i V_\delta\ast \kappa_\delta$ satisfies, for $i=1,\dots,d$,
\begin{align*}
\partial_t (\partial_i V_\delta\ast\kappa_\delta) = \div \big( \partial_i V_\delta\ast(\rho^+_\delta +\rho^-_\delta) ( \nabla V_{\delta} *\kappa_\delta + \nabla U ) \big)
\end{align*}
in the sense of distributions, namely for any $\varphi\in C_c^\infty((0,T)\times \T^d)$
\begin{align}\label{distrM}
\int_0^T \int_{\T^d} (\partial_i V_\delta\ast\kappa_\delta) \partial_t \varphi\, dx dt  = 
- \int_0^T \int_{\T^d} (\rho^+_\delta +\rho^-_\delta) ( \nabla V_{\delta} *\kappa_\delta + \nabla U )\cdot (\partial_i V_\delta\ast \nabla \varphi) dx dt.  
\end{align}
Using \eqref{estimateM}, we can then estimate the right-hand side of \eqref{distrM} in absolute value by
\begin{multline*}
c \|(\rho_\delta^+ + \rho_\delta^-) \big(\nabla V_{\delta} * \kappa_\delta + \nabla U\big) \|^2_{L^2(L\log^{1/2}L)} \|\partial_i V_\delta\ast \nabla \varphi\|_{L^2(\Exp_2)} \\
\leq c \|\partial_i V_\delta\ast \nabla\varphi\|_{L^2(H^{d/2})} 
\leq c \|\partial_i V_\delta\ast \varphi\|_{L^2(H^{d+1})},
\end{multline*}
which is uniformly bounded in $\delta$ by $(V2)$, since 
$$
 \|\nabla V_\delta \ast \varphi\|^2_{L^2(H^{d+1})} = \int_0^T \sum_{k \in \Z^d} (1 + |k|^2)^{d+1} \big| \big[ \widehat{\nabla V_{\delta}} \big]_k \big|^2 |\hat\varphi_k|^2 dt
\leq c  \|\varphi\|^2_{L^2(H^{d+1})}.
$$

By applying Lemma~\ref{l:ALS} with $p=q=2$ to the triple $H^{d/2} (\T^d) \subset\subset \EExp (\T^d) \subset H^{-(d+1)} (\T^d)$ (see  Lemma\,\ref{l:Orlicz}\,\ref{l:O:H1:Exp:cp}), we find that 
$\nabla V_{\delta} * \kappa_\delta$ is compact in $L^2(\EExp)$, namely it converges strongly to $f$.

\smallskip

\noindent
Finally, we show that $f = \nabla V\ast \kappa$. To see this, we compute the distributional limit of the sequence $\nabla V_{\delta} * \kappa_\delta$. Let $\psi \in C_c^\infty((0,T)\times \T^d; \R^d)$ be a test function; then we have 
\begin{align*}
\int_0^T\int_{\T^d} (\nabla V_{\delta} * \kappa_\delta)(t,x) \cdot \psi (t,x) dxdt &= - \int_0^T\int_{\T^d} (V_{\delta} * \kappa_\delta)(t,x) \div \psi (t,x) dxdt \\
&= - \int_0^T\int_{\T^d} (V_{\delta} * \div \psi)(t,x) \kappa_\delta (t,x) dxdt.
\end{align*}
By using \eqref{comp:1} and assumption $(V3)$, we have for the last term that 
$$
\lim_{\delta\to 0} \int_0^T\int_{\T^d} (V_{\delta} * \div \psi)(t,x) \kappa_\delta (t,x) dxdt = \int_0^T\int_{\T^d} (V* \div \psi)(t,x) \kappa(t,x) dxdt,
$$
and by the uniqueness of the limit of $\nabla V_{\delta} * \kappa_\delta$ we deduce that $f = \nabla V\ast \kappa$.

\medskip

\textit{Step 3: The limit $\rho^\pm$ is a solution of the unregularised Groma-Balogh equations.}
The solution $\brho_\delta\in (L^\infty(L \log L))^2$ of the regularised equation~\eqref{f:GB-regularized-repeated} with initial datum $\brho_{\circ,\delta}$ satisfies the weak equations
\begin{align}\label{weakrhon}
&\int_0^T\int_{\T^d} \rho^\pm_\delta (t,x) \Big( \partial_t \varphi(t,x) \mp \nabla \varphi(t,x) \cdot  \nabla (V_{\delta}*\kappa_\delta)(t,x) \mp \nabla \varphi(t,x) \cdot \nabla U(x) \Bigr) dx dt \\
&+ \int_{\T^d} \rho_{\circ,\delta}^\pm(x) \varphi(0,x) dx =0 \nonumber
\end{align}
for every $\varphi \in C_c^\infty ([0,T)\times \T^d)$. We now show that, thanks to the compactness results in Steps 1 and 2, we can pass to the limit in~\eqref{weakrhon} as $\delta\to 0$, hence showing that 
$\brho$ is a solution of the unregularised Groma-Balogh equations, in the sense of Definition~\ref{def:GB-sol-isotropic}, with initial datum $\brho_\circ$.
\smallskip

By \eqref{comp:1}, we can pass to the limit in the first and in the third terms of the first integral in equations~\eqref{weakrhon}. For the last term, the convergence is immediate, since 
$\brho_{\circ,\delta}\xweakto \ast \brho_\circ$ as $\delta\to 0$.
To pass to the limit in the nonlocal terms of equations~\eqref{weakrhon}, note that, by \eqref{comp:1} and by Step 2, 
\begin{align*}
&\int_0^T\int_{\T^d} \big(\rho^\pm_\delta \nabla V_{\delta}*\kappa_\delta - \rho^\pm \nabla V*\kappa\big)\cdot \nabla \varphi  \,dx dt\\
&=\int_0^T\int_{\T^d} \rho^\pm_\delta \big(\nabla V_{\delta}*\kappa_\delta - \nabla V*\kappa\big)\cdot \nabla \varphi  \,dx dt 
+ \int_0^T\int_{\T^d} (\rho^\pm_\delta - \rho^\pm) (\nabla V*\kappa)\cdot \nabla \varphi  \,dx dt \stackrel{\delta\to 0}{\longrightarrow} 0.
\end{align*}
This concludes the proof of the theorem.
\end{proof}

Finally, we prove our main result, Theorem~\ref{t:evoConv}, by combining Lemma~\ref{l:dist-bmu-brho_n} and Theorem~\ref{thm:delta_to_0}.

\begin{proof}[Proof of Theorem~\ref{t:evoConv}] Let $(\delta_n)$ be an infinitesimal sequence satisfying assumption \eqref{cond:evoConv-conv-initial}. Let $t\mapsto \bmu_n(t)$ be the empirical measure of the solution $t\mapsto x^n(t)$ of~\eqref{for:evo:dyncs:edge:2D-repeated} with initial datum $\bmu_{\circ,n}$, and let $\brho_{\delta_n} \in C([0,T]; \mathcal P (\mathbb T^d \times \{\pm1\}))\cap ( L^\infty(0,T; (L \log L(\T^d))^2)$ be the solution of the regularised equation~\eqref{f:GB-regularized-repeated} with initial datum $\brho_{\circ, n}$, whose existence and uniqueness are established in Lemma \ref{l:dist-bmu-brho_n}, and whose bounds are proved in Lemma \ref{l:entropy-estimate}.
 
First, by Lemma~\ref{l:dist-bmu-brho_n} and assumption \eqref{cond:evoConv-conv-initial}, we can extract a subsequence of $\delta_n$ (not relabelled) along which we have 
\begin{align} \label{p:dWT:conv}
\sup_{t\in [0,T]}{\bW} (\bmu_{n}(t), \brho_{\delta_n}(t))\xto{n \to \infty} 0.
\end{align}
Moreover, we claim that
\begin{subequations}
  \begin{align} \label{p:bdd:seq1}
(\brho_{\delta_n})
&\text{ is bounded in } L^\infty (0,T; (L\log L(\T^d))^2), \text{ and} \\ \label{p:bdd:seq2}
(\partial_t \brho_{\delta_n})
&\text{ is bounded in } L^2 (0,T; (W^{-1,1} (\T^d))^2).
\end{align}  
\end{subequations}
Property \eqref{p:bdd:seq1} is given directly by \eqref{apriori_rhon}; \eqref{p:bdd:seq2} follows from \eqref{estimateM} since, for any $\varphi\in C_c^\infty((0,T)\times \T^d)$, proceeding similarly as in \eqref{stima:duale},
\begin{align*}
\bigg| \int_0^T \int_{\T^d} \rho_{\delta_n}^\pm(t,x) \partial_t \varphi (t,x) dx dt \bigg| 
\leq c \, \|\nabla \varphi\|_{L^2(\Exp_2)} \leq c \, \|\varphi\|_{L^2(W^{1,\infty})},
\end{align*}
where we have used the continuous embedding $L^\infty(\T^d)\hookrightarrow \Exp_2(\T^d)$ (see, e.g., \cite[Lemma 1]{Kuhn03}). 

Thanks to \eqref{p:bdd:seq1}-\eqref{p:bdd:seq2}, we apply Lemma \ref{l:ALS} to $X_0 = (L\log L(\T^d))^2$ and $X=X_1 = (W^{-1,1} (\T^d))^2$ to deduce compactness of $(\brho_{\delta_n})$ in $C^0([0,T]; (W^{-1,1} (\T^d))^2)$. In particular, up to a subsequence, 
\begin{equation}\label{BLn-convergence}
\sup_{t\in [0,T]} \|\brho_{\delta_n}(t) -  \brho (t)\|_{\textrm{BL}}^\ast \to 0,
\end{equation}
where, by Theorem~\ref{thm:delta_to_0}, $\brho$ is a solution of the unregularised Groma-Balogh equations with initial datum $\brho_\circ$, in the sense of Definition~\ref{def:GB-sol-isotropic}. Since the dual bounded Lipschitz norm, when restricted to $\P (\T^d \times \{\pm1\} )$, is equivalent to $\bW$ (by Kantorovich duality), we deduce from \eqref{BLn-convergence} that 
\begin{align} \label{p:dWT:conv-2}
\sup_{t\in [0,T]}{\bW} (\brho_{\delta_n}(t), \brho (t))\xto{n \to \infty} 0.
\end{align}
In conclusion, \eqref{p:dWT:conv} and \eqref{p:dWT:conv-2} imply that $\bW(\bmu_n(t),\brho(t))\to 0$ along a subsequence as $n \to \infty$, uniformly in $t \in [0,T]$. This concludes the proof of Theorem \ref{t:evoConv}.
\end{proof}

\subsection{Case 2: Two-dimensional domain, slip plane-confined evolution.} Here we consider the anisotropic case in two dimensions, as in Section~\ref{subsec:description-confined}. 
The main result of this section is Theorem \ref{t:evoConv:conf}, which is the analogue of Theorem \ref{t:evoConv} in this framework. 

\medskip

In analogy with Definition \ref{def:GB-sol-isotropic}, we define the concept of solution for the unregularised and anisotropic evolution \eqref{f:GB-slip-plane-confined} as follows.

\begin{defn}
\label{def:GB-sol-isotropic:conf}
Let $\brho_\circ =(\rho^+_\circ, \rho^-_\circ)\in \big( L \log L (\mathbb T^2) \big)^2 \cap\mathcal P(\T^2\times \{\pm1\})$, let $T>0$, and let $\brho=(\rho^+, \rho^-): [0,T]\to \mathcal P(\mathbb T^2\times \{\pm1\})$. We say that $\brho$ is a solution of~\eqref{f:GB-slip-plane-confined} in $[0,T]$, with initial datum $\brho_\circ$, if 
\begin{itemize}
\smallskip
\item[$(1)$] $\rho^+, \rho^-\in L^\infty(0,T;L\log L(\T^2))$ and $\partial_1 V*\kappa\in L^1(0,T;H^1(\T^2))$, where $\kappa = \rho^+-\rho^-$;
\item[$(2)$] For every $\varphi, \psi \in C_c^\infty ([0,T)\times \T^2)$
\begin{subequations}
\begin{align*}
&\int_0^T\int_{\T^2} \rho^+ (t,x) \big(\partial_t \varphi(t,x) - \partial_1 \varphi(t,x) \, \partial_1 (V*\kappa + U)(t,x) \big) dx dt + \int_{\T^2} \rho_\circ^+(x) \varphi(0,x) dx =0,\\
&\int_0^T\int_{\T^2} \rho^- (t,x) \big(\partial_t \psi(t,x) + \partial_1 \psi(t,x) \,  \partial_1 (V*\kappa + U)(t,x) \big) dx dt + \int_{\T^2} \rho_\circ^-(x) \psi(0,x) dx =0.
\end{align*}
\end{subequations}
\end{itemize}
\end{defn}

\medskip

\noindent
We set, as in the previous section, $\lambda_{\delta} := \|D^2 V_{\delta}\|_{L^\infty(\T^2)}$. The main theorem of this section is the following. 

\begin{thm}[Evolutionary convergence] \label{t:evoConv:conf} 
Let $\brho_\circ \in \big( L \log L (\mathbb T^2) \big)^2 \cap \mathcal P (\mathbb T^2 \times \{\pm1\})$, and let $(\brho_{\circ,n}), (\bmu_{\circ,n})\subset \mathcal P (\mathbb T^2 \times \{\pm1\})$ be approximating sequences for $\brho_\circ$ as in Lemma \ref{lemm:approx}. Let $T>0$ be fixed, and let $\delta_n>0$ be such that 
\begin{equation}
\label{cond:evoConv-conv-initial-ani}
\exp (3 \lambda_{\delta_n} T) \, \bW(\bmu_{\circ,n}, \brho_{\circ,n}) \to 0, \qquad \text{as }n \to \infty.
\end{equation}
Let $t\mapsto \bmu_n(t)$ be the empirical measure of the solution $t\mapsto x^n(t)$ of~\eqref{for:evo:dyncs:edge:2D-confined} on $[0,T]$, with $\delta=\delta_n$ and initial datum $\bmu_{\circ,n}$.

Then there exists the limit $\brho(t):=\lim_{n\to \infty}\bmu_n(t)$ with respect to $\bW$, uniformly in $t \in [0,T]$, where $\brho$ is a solution of the unregularised Groma-Balogh equations~\eqref{f:GB-slip-plane-confined} in the sense of Definition~\ref{def:GB-sol-isotropic:conf} with initial datum $\brho_\circ$.
\end{thm}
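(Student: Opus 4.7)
The strategy parallels the proof of Theorem \ref{t:evoConv}: I would introduce an intermediate trajectory $\brho_{\delta_n}$ defined as the solution of the slip-confined regularised equations \eqref{f:GB-regularized-confined} with initial datum $\brho_{\circ,n}$, then show separately that $\bW(\bmu_n(t), \brho_{\delta_n}(t)) \to 0$ uniformly in $t$, and that $\brho_{\delta_n}$ converges to a solution of the unregularised anisotropic Groma-Balogh system \eqref{f:GB-slip-plane-confined}. The three structural ingredients from the isotropic argument adapt via the substitutions $\nabla \rightsquigarrow e_1 \partial_1$ in the velocity fields, $\Delta \rightsquigarrow \partial_1^2$ in the entropy dissipation, and Lemma \ref{l:V:props} replaced throughout by the anisotropic variant stated in Remark \ref{rem:11}.

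First I would establish the analogue of Lemma \ref{l:dist-bmu-brho_n}. The velocity fields $v^\pm[\bmu] = \mp \bigl(\partial_1 V_\delta * (\mu^+-\mu^-) + \partial_1 U\bigr)\, e_1$ are still Lipschitz in space uniformly in $t$, with constant controlled by $\lambda_\delta + \|D^2 U\|_\infty$, so Lemma \ref{lem:dWdt} applies verbatim and the Gronwall computation leading to
\[
  \bW(\bmu_\delta(t), \bnu_\delta(t)) \leq c\, e^{3 \lambda_\delta t}\, \bW(\bmu_\circ, \bnu_\circ)
\]
goes through unchanged, using $|(x-y+k(x,y)) \cdot v^\pm| \leq \|x-y\|_{\T^2}\,|v^\pm_1|$ in place of the full scalar product. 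Existence of a measure-valued solution of \eqref{f:GB-regularized-confined} with any initial datum $\bmu_\circ \in \mathcal P(\T^2 \times \{\pm 1\})$ follows from the same discrete approximation argument via Lemma \ref{lemm:approx}; note that the evolution \eqref{for:evo:dyncs:edge:2D-confined} freezes the vertical coordinates, which only makes the Lipschitz-in-time estimate for $t \mapsto \bmu_n(t)$ easier. Next I would derive the entropy bound: running the viscous approximation as in Lemma \ref{l:entropy-estimate} and differentiating $\Ent(\brho_\delta^\varepsilon)$ in time gives
\[
  \frac{d}{dt} \Ent(\brho_\delta^\varepsilon) \leq \int_{\T^2} \kappa_\delta^\varepsilon\, \partial_1^2 (V_\delta * \kappa_\delta^\varepsilon + U)\, dx \leq -c\, \|\partial_1 V_\delta * \kappa_\delta^\varepsilon\|_{H^1(\T^2)}^2 + C,
\]
where the crucial last step invokes Remark \ref{rem:11} with $d=2$. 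Passing $\varepsilon \to 0$ yields the a priori bound
\[
  \sup_{t \in [0,T]} \Ent(\brho_\delta(t)) + c \int_0^T \|\partial_1 V_\delta * \kappa_\delta(\tau)\|_{H^1(\T^2)}^2\, d\tau \leq C + \Ent(\brho_{\circ,\delta}).
\]

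The remaining task is to pass to the limit $\delta_n \to 0$ in the weak form of \eqref{f:GB-regularized-confined}. Using part \ref{l:O:bdd} of Lemma \ref{l:Orlicz}, the entropy bound gives weak-$\ast$ compactness of $(\brho_{\delta_n})$ in $L^\infty(L\log L)$, while the dissipation bound gives boundedness of $\partial_1 V_{\delta_n} * \kappa_{\delta_n}$ in $L^2(0,T;H^1(\T^2))$. The linear terms in the weak formulation pass to the limit directly; the nonlinear product $\rho_{\delta_n}^\pm\, \partial_1 V_{\delta_n} * \kappa_{\delta_n}$ requires strong convergence of $\partial_1 V_{\delta_n} * \kappa_{\delta_n}$ in an appropriate Orlicz space. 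As in Step 2 of the proof of Theorem \ref{thm:delta_to_0}, I would first bound the product in $L^2(L\log^{1/2}L)$ via Lemma \ref{l:Orlicz}, then use the equation itself to estimate $\partial_t \kappa_{\delta_n}$, and hence $\partial_t(\partial_1 V_{\delta_n} * \kappa_{\delta_n})$, in a negative Sobolev space uniformly in $n$; applying Lemma \ref{l:ALS} to the triple $H^1(\T^2) \subset\subset \EExp(\T^2) \subset H^{-3}(\T^2)$ yields the required strong $L^2(\EExp)$ convergence, and (V3) identifies the limit as $\partial_1 V * \kappa$. The main subtlety, compared to the isotropic case, is that the dissipation controls only the horizontal derivative, so the Aubin–Lions triple must be tuned to avoid demanding full $H^1$-regularity of $V_{\delta_n} * \kappa_{\delta_n}$; but since both the weak form in Definition \ref{def:GB-sol-isotropic:conf} and the product estimate of Lemma \ref{l:product-term} involve only $\partial_1 V * \kappa$, this reduced control is exactly what is needed. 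Combining the resulting convergence of $\brho_{\delta_n}$ to a solution $\brho$ of \eqref{f:GB-slip-plane-confined} with the Gronwall estimate and hypothesis \eqref{cond:evoConv-conv-initial-ani}, and using the equivalence of $\bW$ and the dual bounded Lipschitz norm on $\mathcal P(\T^2 \times \{\pm 1\})$ exactly as in the final step of the proof of Theorem \ref{t:evoConv}, we conclude $\bW(\bmu_n(t), \brho(t)) \to 0$ uniformly in $t \in [0,T]$.
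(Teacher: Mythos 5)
Your proposal is correct and follows essentially the same route as the paper: an intermediate solution of the regularised slip-confined system with initial datum $\brho_{\circ,n}$, the Gronwall/Wasserstein estimate adapted verbatim to the anisotropic velocity field, the entropy bound via the viscous approximation combined with Remark \ref{rem:11}, and the Aubin--Lions compactness argument of Theorem \ref{thm:delta_to_0} to pass to the limit $\delta_n\to 0$. Your observation that the dissipation only controls $\partial_1 V_{\delta_n}*\kappa_{\delta_n}$, and that this is exactly the quantity appearing in Definition \ref{def:GB-sol-isotropic:conf}, correctly identifies the only point where the isotropic argument needs attention.
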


\noindent
\textit{Structure of the proof of Theorem \ref{t:evoConv:conf}.}
Again, we prove this theorem by constructing an intermediate solution $\brho_{\delta_n}$ of the regularised equations \eqref{f:GB-regularized-confined} with the modified initial condition $\brho_{\circ,n}$ with $n\rho^\pm_{\circ,n}(\T^2)\in \N$, and show that $\brho_{\delta_n}$ is close to $\bmu_n$ with respect to $\bW$, uniformly in time, and converges to a solution $\brho$ of~\eqref{f:GB-slip-plane-confined}.

\begin{proof}
We split the proof into three steps.

\smallskip

\noindent
\textit{Step 1: Existence, uniqueness and Gronwall estimate for \eqref{f:GB-regularized-confined} for fixed $\delta$.} The proof of the existence of a solution for \eqref{f:GB-regularized-confined} follows by discretisation via empirical measures, exactly as in Step 1 of the proof of Lemma \ref{l:dist-bmu-brho_n}.  Similarly, the derivation of the Gronwall estimate can be done as in Step 2 of the proof of Lemma \ref{l:dist-bmu-brho_n}.

\smallskip

\noindent
\textit{Step 2: Convergence of $\brho_{\delta}$ to a solution $\brho$ of~\eqref{f:GB-slip-plane-confined}.} We split this step into two sub-steps: The derivation of a-priori bounds for $\brho_\delta$, in the spirit of Lemma~\ref{l:entropy-estimate}, and the limit process as $\delta\to 0$.

\noindent
\textit{Step 2.1: Bounds for $\brho_\delta$ for fixed $\delta$.} Let $\varepsilon>0$ and let  $\brho_{\delta}^\varepsilon = (\rho_\delta^{\varepsilon,+}, \rho_\delta^{\varepsilon,-}) \in C^\infty ([0,T) \times \T^2)$ be the solution of the regularisation of the system \eqref{f:GB-regularized-confined} obtained by adding the term $\varepsilon \Delta \rho^\pm$ to both equations as for \eqref{f:GB-regularized-repeated-epsilon}, and by regularising the initial datum $\brho_{\circ,\delta}$. Following the proof of Lemma~\ref{l:entropy-estimate} and 
using Remark \ref{rem:11}, it is easy to show that for every $t\in [0,T]$,
\begin{equation}\label{claim-eps-11}
 \Ent(\brho^\varepsilon_\delta(t)) + \int_0^t \|\partial_1 V_{\delta}\ast \kappa^\varepsilon_\delta(\tau)\|^2_{H^{1}(\T^2)}\, d\tau \leq c + \Ent(\brho^\e_{\circ,\delta}),
\end{equation}
that the limit of $\brho^\varepsilon_\delta$ as $\e\to 0$ is the solution $\brho_\delta$ of~\eqref{f:GB-regularized-repeated} with initial datum $\brho_{\circ,\delta}$, and that $\brho_\delta$ satisfies 
\begin{equation*}
 \Ent(\brho_\delta(t)) + \int_0^t \|\partial_1 V_{\delta}\ast \kappa_\delta(\tau)\|^2_{H^{1}(\T^2)}\, d\tau \leq c + \Ent(\brho_{\circ,\delta}),
\end{equation*}
by letting $\e\to 0$ in \eqref{claim-eps-11}.

\smallskip

\noindent
\textit{Step 2.2: Convergence of $\brho_\delta$ to a solution of~\eqref{f:GB-slip-plane-confined}.} Here we can follow exactly the proof of Theorem~\ref{thm:delta_to_0} to deduce convergence of $\brho_\delta$ to a solution $\brho$ of~\eqref{f:GB-slip-plane-confined}, with respect to the $\textrm{weak}^\ast$ convergence in $L^\infty(0,T; (L\log L (\T^2))^2 )$. 

\smallskip

\noindent
\textit{Step 3: Conclusion.} Let $(\delta_n)$ be an infinitesimal sequence satisfying assumption \eqref{cond:evoConv-conv-initial-ani}. Let $t\mapsto \bmu_n(t)$ be the empirical measure of the solution $t\mapsto x^n(t)$ of~\eqref{for:evo:dyncs:edge:2D-confined} with initial datum $\bmu_{\circ,n}$, and let $\brho_{\delta_n} \in C([0,T]; \mathcal P (\mathbb T^2 \times \{\pm1\}))\cap ( L^\infty(0,T; (L \log L(\T^2))^2)$ be the solution of the regularised equation~\eqref{f:GB-regularized-repeated} with initial datum $\brho_{\circ,n}$, whose existence and uniqueness are established in Step 1 of the proof, and whose bounds are proved in Step 2.
 
Then, proceeding as in the proof of Theorem~\ref{t:evoConv}, we can extract a time-independent subsequence of $n$ (not relabelled) along which we have simultaneously
\begin{align*} 
\bW (\bmu_n(t), \brho_{\delta_n}(t)) \xto{n \to \infty} 0, 
\quad \text{and} \quad 
\bW(\brho_{\delta_n}(t), \brho(t)) \xto{n \to \infty} 0,
 \end{align*}
 uniformly in time,
 where  $\brho$ is a solution of the unregularised Groma-Balogh equations \eqref{f:GB-slip-plane-confined}, in the sense of Definition~\ref{def:GB-sol-isotropic:conf}, with initial datum $\brho_\circ$.
\end{proof}

%

\section{Non-convergence: A uniform distribution of short dipoles} \label{section:dipole}

In this section we construct counterexamples to the discrete-to-continuum convergence of gradient flows for the isotropic and the slip-confined cases of Section~\ref{sec:setups}. More precisely, for the isotropic case in dimension $d=1,2$ and for the slip-confined case in dimension $d=2$, we construct a family of initial data for the particle systems \eqref{for:evo:dyncs:edge:2D} such that the resulting discrete solutions do converge to a continuum limit, but this limit does not solve the corresponding Groma-Balogh equations \eqref{f:GBintro}. In Section~\ref{ss:cvb:sketch} we explain the idea behind this construction, and heuristically describe how this choice leads to non-convergence. In Section~ \ref{ss:cvb:1D} we state and prove the non-convergence result for the isotropic one-dimensional (Theorem \ref{t:CvB:1D}) and two-dimensional cases (Theorem \ref{t:CvB:2D}). Section \ref{ss:cvb:C2} deals with the anisotropic case of edge dislocations.

\subsection{Choice of the initial conditions $(x_\circ^n,b_\circ^n)$.}
\label{ss:cvb:sketch}
The strategy of the counterexamples is as follows. We assume that the regularisation parameter $\delta_n$ converges to zero sufficiently quickly, and we construct a sequence of initial data for which the resulting solutions of \eqref{for:evo:dyncs:edge:2D} are \emph{constant in time}, at least approximately. The continuum limit of these solutions is then constant in time, and therefore is not a solution of the Groma-Balogh equations \eqref{f:GBintro}, unless $\nabla U  \equiv 0$.

In Figure \ref{fig:dipole} we sketch our initial condition $(x_\circ^n,b_\circ^n)$ in the case of edge dislocations in a two-dimensional domain. 
We choose to position $\frac n2$  \emph{dipoles} of width of order $\delta_n$ at a distance of order at least $\tfrac{1}{\sqrt n}$ from one another, under the assumption that $\delta_n\ll \frac1{n}$ (the precise assumption on the relation between $\delta_n$ and $n$ will be specified in each section).

\begin{figure}[h!]
\centering
\begin{tikzpicture}[scale=0.11]
    \def \r {0.15}
      
    \foreach \point in {(5,3), (20,9), (8,22), (18, 36), (32, 27), (34, 5)}{
    \begin{scope}[shift=\point,scale=1, rotate=270]
        \fill (1 - \r, -1) rectangle (1 + \r, 1); 
        \fill (-1, -\r) rectangle (1, \r); 
        \fill (-1, 0) circle (\r);
        \fill (1, 1) circle (\r);
        \fill (1, -1) circle (\r);  
    \end{scope}
    }
    
    \foreach \point in {(3,3), (18,9), (7,19), (20, 36), (33, 24), (36, 5)}{
    \begin{scope}[shift=\point,scale=1, rotate=270]
        \fill (-1 - \r, -1) rectangle (-1 + \r, 1); 
        \fill (-1, -\r) rectangle (1, \r); 
        \fill (1, 0) circle (\r);
        \fill (-1, 1) circle (\r);
        \fill (-1, -1) circle (\r);  
    \end{scope}
    }
    
    \foreach \point in {(4,3), (19,9), (7.5,20.5), (19, 36), (32.5, 25.5), (35, 5)}{
    \begin{scope}[shift=\point]
        \draw (0,0) circle (3);
    \end{scope}
    }
    
    \draw[<->] (4 + 0.59, 3 + 2.94) -- (7.5 - 0.59, 20.5 - 2.94) node[midway, right]{$\geq \frac{c}{\sqrt n}$};
    \draw[<->] (32, 9) -- (38, 9) node[midway, above]{$\sim \delta_n$};
    
    \draw[dashed] (32.5, 28.5) -- (80,40);
    \draw[dashed] (32.5, 22.5) -- (80,0);
    \begin{scope}[shift = {(80,20)}]
        \fill[white] (0,0) circle (20);
        \draw (0,0) circle (20);
        \fill (0,0) circle (4*\r);
        \draw (0,-\r) node[below]{$z_{\circ,i}^n$};
        
        \begin{scope}[rotate=180]
        \draw (0,0) -- (8,-8) node[midway, right]{$d_{\circ,i}^n$};
        \draw (6,-7) -- (8,-8) -- (7,-6);
        \draw[dotted] (0,0) -- (-8,8);
        \end{scope}
        
        \draw (8,-8) node[anchor = south west]{$x_{\circ,i}^{n,-}$};
        \begin{scope}[shift={(8,-10)}, scale=2, rotate=270]
          \fill (-1 - \r, -1) rectangle (-1 + \r, 1); 
          \fill (-1, -\r) rectangle (1, \r); 
          \fill (1, 0) circle (\r);
          \fill (-1, 1) circle (\r);
          \fill (-1, -1) circle (\r);  
        \end{scope}
        
        \draw (-8,8) node[anchor = south west]{$x_{\circ,i}^{n,+}$};
        \begin{scope}[shift={(-8,10)},scale=2, rotate=270]
        \fill (1 - \r, -1) rectangle (1 + \r, 1); 
        \fill (-1, -\r) rectangle (1, \r); 
        \fill (-1, 0) circle (\r);
        \fill (1, 1) circle (\r);
        \fill (1, -1) circle (\r);   
    \end{scope}
    \end{scope}
\end{tikzpicture} 
\caption{A configuration of $n = 12$ edge dislocations arranged in $\tfrac n2 = 6$ dipoles. The dipole $(x_{\circ,i}^{n,+}, x_{\circ,i}^{n,-})$, with the positive dislocation at $x_{\circ,i}^{n,+}$ and the negative dislocation at $x_{\circ,i}^{n,-}$, is highlighted, as well as its mid-point $z_{\circ,i}^n$ and the dipole half-width vector $d_{\circ,i}^n$ (see \eqref{variables_zd} for the precise definition).}
\label{fig:dipole}
\end{figure}
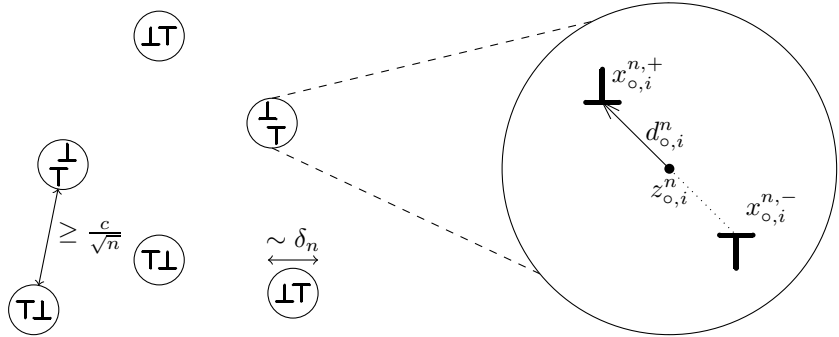

The idea behind our choice for the initial condition is the following. Since $V$ blows up logarithmically at $0$, $-V_{\delta_n}$ has a deep well around $0$ of width proportional to $\delta_n$. We call a pair of a positive and a negative dislocation at $(x_{\circ,i}^{n,+}, x_{\circ,i}^{n,-})$ a \emph{short} dipole if it `fits' in this well (namely, if its width is smaller than the width of the well). By the logarithmic nature of the singularity of $V_{\delta_n}$, the walls of this well are $\frac1{\delta_n}$ steep, so that it takes a large force to break this dipole apart. 

On the other hand, the dipole-dipole interaction is weak: indeed the force field generated by a dipole at $z_{\circ,i}^n$ is small at any $y \in \T^2$ with $|y - z_{\circ,i}^n| > \tfrac{c}{\sqrt n}$, since the interaction forces $\nabla V_{\delta_n}(x_{\circ,i}^{n,+} - y)$ and $-\nabla V_{\delta_n}(x_{\circ,i}^{n,-} - y)$ generated by the positive and the negative dislocations in the dipole cancel out up to an error  of order $\delta_n$. 
Similarly, also the net effect of the force $-\nabla U$ generated by the external potential $U$ on the dipole is of order $\delta_n$.

Hence, the evolution of the short dipoles governed by \eqref{for:evo:dyncs:edge:2D} is slow in time, and each dipole does not break apart. By choosing the initial positions carefully, and by making appropriate assumptions on the rate $\delta_n\to0$, we can then prove that the solution $x^n(t)$ of \eqref{for:evo:dyncs:edge:2D} is approximately stationary, namely $x^n(t) \approx x^n_\circ$ for all $t \in [0,T]$, up to an error term that is infinitesimal as $n \to \infty$. 

Now, let $\brho_\circ = (\rho_\circ^+,\rho_\circ^-) \in \mathcal{P}(\T^2\times\{\pm1\})$ be such that $\rho_\circ^+ = \rho_\circ^- \in L^\infty (\T^2)$. The assumption that $\rho_\circ^+ = \rho_\circ^-$ is essential since it allows us to approximate $\brho_\circ$ with short dipoles $(x_\circ^n,b_\circ^n)$ as in Figure \ref{fig:dipole}, with $c = 1/\| \rho_\circ^+ \|_\infty$. Then, the fact that the discrete solutions $x^n(t)$ of \eqref{for:evo:dyncs:edge:2D} with initial data $(x_\circ^n,b_\circ^n)$ are approximately constant implies that $\bmu_n (t) \weakto \brho_\circ$ as $n \to \infty$ for all $t \in [0, T]$, where $\bmu_n(t)$ is the empirical measure associated to $(x^n(t),b_\circ^n)$.  However, if we assume that $\nabla U \not \equiv 0$ on $\supp \brho_\circ$, then the limit stationary solution $\brho_\circ$ is not a solution to the limit equation \eqref{f:GBintro}. 
\smallskip

In the remainder of this section we make the arguments above rigorous in the isotropic case in dimension one and two (Section~\ref{ss:cvb:1D}), and in the two-dimensional slip-confined case (Section~\ref{ss:cvb:C2}).

\subsection{The isotropic case: $d=1$ and $d=2$.}
\label{ss:cvb:1D}

Let $\T^d$ be the $d$-dimensional flat torus, and let $\|\cdot\|_{\Td}$ denote the distance on $\T^d$ defined in \eqref{norm:T}. 
Let $\delta_n>0$ for every $n\in \N$. We assume that $V, V_{\delta_n}, U: \T^d \to \R$ satisfy the following assumptions, in analogy with the ones listed in Section~\ref{sec:setups} (here the subscript I stands for `isotropic'):
\begin{itemize}
\item[$(V1)_{\textrm{I}}$]  $\big\| \|s\|_{\Td}^2 D^2V(s) \big\|_{L^\infty(\T^d)} < \infty$, and $V(x) = V(-x)$;
\item[$(V2)_{\textrm{I}}$] $V_{\delta_n} \in C^2(\T^d)$, and $V_{\delta_n}(x) = V_{\delta_n}(-x)$;
\vspace{.1cm}
\item[$(V3)_{\textrm{I}}$] $\left\{
\begin{tabular}{@{}l@{}}  \smallskip
    ($d=1$) $\exists \, c, \gamma > 0$ such that $V'_{\delta_n}(2 \gamma \delta_n) \leq - c / \delta_n$ for $n$ large; \\
    \smallskip
    ($d=2$) there exist $C, c > 0$, and a closed and simple $C^1$-curve $\Gamma_n =\partial B_n$,\\ $B_n\subset B(0, C \delta_n)$, such that 
    $\max_{\Gamma_n} \big(\nabla V_{\delta_n}\cdot \nu_n\big) \leq - c / \delta_n$ for $n$ large, where\\ $\nu_n$ is the outward pointing normal to $\Gamma_n$;
\end{tabular}
\right.$
\vspace{.1cm}
\item[$(V4)_{\textrm{I}}$] 
$\displaystyle \delta_n  \| \nabla V_{\delta_n} \|_{L^\infty(\T^d)}$ is bounded in $n$; 
\item[$(V5)_{\textrm{I}}$]  $\forall\ c > 0$  $\exists\ C > 0$ such that $\sup\Big\{\| D^2V_{\delta_n} (s)\|: \|s\|_{\Td} > \frac{c}{n^{1/d}} \Big\}  \leq C n^{\frac2d}$ for $n$ large;
\item[$(U)_{\textrm{I}}$]  $U \in C^2(\T^d)$.
\end{itemize}

The assumptions above are quite natural and are satisfied by the one-dimensional model potential $V_{\delta_n}(s)= -\frac12 \log(s^2+\delta_n^2)$. Assumption $(V5)_{\textrm{I}}$ mimics the bound $(V1)_{\textrm{I}}$ on~$V$, and forces the regularisation of the singularity to be localised in an interval around $0$ with length asymptotically smaller than $\tfrac{1}{n^{1/d}}$ (which is the typical dipole-dipole distance in dimension $d$). Assumption $(V4)_{\textrm{I}}$ provides an \textit{a priori} bound on the force generated by a dislocation; this assumption can be easily relaxed by asking $\delta_n^\ell \| \nabla V_{\delta_n} \|_{\infty}$ to be bounded for some $\ell \in \N$.  
Assumption $(V3)_{\textrm{I}}$ identifies a force \textit{barrier} of slope $\tfrac{1}{\delta_n}$ on the boundary of a region of diameter $\delta_n$, which needs to be overcome for a dipole to break apart. In dimension one the region of diameter $\delta_n$ is an interval; in the two-dimensional case, this barrier lives on the boundary of a region $B_n$ contained in the ball $B(0, C \delta_n) \subset \T^2$, namely on a closed curve which encloses $0$. Possible choices for $\Gamma_n$ are spheres of the form $\{ s \in \T^2 : \|s\|_{\Tt} = \gamma \delta_n \}$ or level sets of $V_{\delta_n}$ such as $\{ s \in \T^2 : V_{\delta_n} (s) = - \gamma \log \delta_n \}$. The precise choice of $\Gamma_n$ depends on the type of regularisation and on the symmetries of $V$.

\subsubsection{The one-dimensional case} This is the case we describe in detail. The two-dimensional case will be briefly illustrated in Section~\ref{ss:cvb:C1}, just by highlighting in what it differs from the one-dimensional setup treated here.

We consider a special arrangement of dislocations of alternated sign ($-+-+\dots$) into short dipoles. This corresponds to the choice of $n$ even and $b_i=(-1)^i$ in the energy~\eqref{eqn:disc:energy}.
It is convenient to re-label the positions of positive and negative dislocations as 
\begin{equation*}
  x^{+} = (x_1^{+}, \ldots, x_{\frac{n}{2}}^{+}) \in \T^{\frac{n}{2}}\qquad  \hbox{and}\qquad    x^{-} = (x_1^{-}, \ldots, x_{\frac{n}{2}}^{-}) \in \T^{\frac{n}{2}}\,.
\end{equation*}
We now define a new set of variables $(z,d)\in \T^n$, for $k=1,\dots,\frac n2$, as follows:
\begin{align}\label{variables_zd}
\begin{cases}
\smallskip
z_k = \tfrac12 (x_{k}^+ + x_{k}^-)\\
d_k = \tfrac{1}{2}(x_{k}^+ - x_{k}^-)
\end{cases}
\quad  \Longleftrightarrow \quad 
\begin{cases}
x_{k}^+ = z_k + d_k\\
x_{k}^- = z_k - d_k
\end{cases}.
\end{align}
The variable $z_k$ represents the position of the mid-point of the $k$-th dipole, while $d_k$ is the half-width of the $k$-th dipole. We consider the case of $n/2$ \textit{short} dipoles, where the width of the dipole is much smaller than the dipole-dipole distance. This can be formally expressed by the condition $\max_k |d_k|\ll \min_{i \neq j} |z_i - z_j|_{{\TT}}$.

\smallskip

In terms of the variables $(z,d)$ the energy~\eqref{eqn:disc:energy} reads as 
\begin{align*}
E_n(z,d) = \frac{1}{2n^2} \sum_{k,\ell=1}^{n/2} \sum_{p,q =\pm 1} pq V_{\delta_n}(z_k-z_\ell + pd_k-qd_\ell) - 
\frac{1}{n} \sum_{k=1}^{n/2}\sum_{p=\pm 1} p U(z_k + pd_k),
\end{align*}
and the gradient-flow equation~\eqref{for:evo:dyncs:edge:2D} reads as
\begin{equation}\label{Discrete:gf}
  \frac d{dt} \left[ \begin{array}{c} z \\ d \end{array} \right] 
  = - \frac n2 \left[ \begin{array}{c} \partial_z E_n (z, d) \\ \partial_d E_n (z, d) \end{array} \right].
\end{equation}
Relying on the evenness of $V_{\delta_n}$, we rewrite the right-hand side of \eqref{Discrete:gf} as
\begin{align}\label{pzEn}
\partial_{z_i} E_n(z,d) 
&= \frac1{n^2} \sum_{\ell=1}^{n/2}\sum_{p,q=\pm1} pq V'_{\delta_n}(z_i - z_\ell + p d_i - q d_\ell) - \frac1n\sum_{p=\pm1}p U'(z_i+pd_i), \\\label{pdEn}
\partial_{d_i} E_n(z,d) 
&= \frac1{n^2} \sum_{\ell=1}^{n/2}\sum_{p,q=\pm1} q V'_{\delta_n}(z_i - z_\ell + p d_i - q d_\ell) - \frac1n\sum_{p=\pm1} U'(z_i+pd_i).
\end{align}
Hence, the discrete evolution of the short dipoles is described by the system
\begin{align} \label{f:dtzk}
  \frac d{dt} z_i 
  & = -\frac1{2 n} \sum_{\ell=1}^{n/2}\sum_{p,q=\pm1} pq V'_{\delta_n}(z_i - z_\ell + p d_i - q d_\ell) + \frac12\sum_{p=\pm1}p U'(z_i+pd_i), \\\label{f:dtdk}
  \frac d{dt} d_i 
  & =- \frac1{2 n} \sum_{\ell=1}^{n/2}\sum_{p,q=\pm1} q  V'_{\delta_n} (z_i - z_\ell + p d_i - q d_\ell) + \frac12\sum_{p=\pm1}U'(z_i + p d_i),
\end{align} 
for $i=1,\dots,n/2$.

\medskip

We now introduce a subset of $\T^n$ where we study the evolution; we will refer to it as the \textit{slow manifold}. For any constant $M>0$, we define the set $\Omega(M) \subset \T^n$ as 
\begin{equation}\label{OmegaM1}
\Omega (M) := \left\{ (z,d) \in \T^n: \min_{i \neq j} |z_i - z_j|_{{\TT}} \geq \frac Mn, \ \max_i | d_i | \leq \gamma \delta_n \right\},
\end{equation}
where $\gamma > 0$ is defined in $(V2)_1$.

\begin{thm} \label{t:CvB:1D}
Let $(\delta_n)_{n\in \N}\subset (0,\infty)$ be a sequence such that $n^3 \delta_n \to 0$ as $n\to \infty$. Let $U$, $V$ and $V_{\delta_n}$ satisfy conditions $(V1)_{\textrm{I}}$-$(V5)_{\textrm{I}}$ and $(U)_{\textrm{I}}$. Let $A, T > 0$ be given constants. 
Then, for every initial condition $(z^n_\circ, d^n_\circ) \in \Omega (2A)$, the solution $(z^n(t),d^n(t))$ of \eqref{f:dtzk}-\eqref{f:dtdk} satisfies $(z^n(t),d^n(t))\in \Omega (A)$ for every $t\in [0,T]$ and for large enough $n$. Moreover, the empirical measures $\mu_n^\pm (t)$ and $\mu_{\circ,n}^\pm$ associated to $x^{n,\pm} (t)$ and to $x_\circ^{n,\pm}$, respectively, where $x^{n,\pm} (t)$ and $x_\circ^{n,\pm}$ are related to $(z^n(t),d^n(t))$ and $(z^n_\circ, d^n_\circ)$ via \eqref{variables_zd}, satisfy $\mu_n^\pm (t) - \mu_{\circ,n}^\pm \weakto 0$ narrowly as $n \to \infty$, uniformly in $t \in [0, T]$.
\end{thm}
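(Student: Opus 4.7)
The plan is to introduce the first exit time
$\tau^n := \inf\{t \in [0,T] : (z^n(t), d^n(t)) \notin \Omega(A)\}$
(with $\tau^n = T$ if no exit occurs), derive sharp velocity bounds valid on $[0, \tau^n]$, and then show by contradiction that $\tau^n = T$ for $n$ large. Narrow convergence of $\mu_n^\pm(t) - \mu_{\circ,n}^\pm$ will then follow from the smallness of the individual particle displacements.

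The crux will be a pair of velocity estimates valid on $\Omega(A)$. In \eqref{f:dtzk}, the self-term $\ell=i$ vanishes because $V'_{\delta_n}$ is odd by $(V2)_{\textrm{I}}$. For $\ell \neq i$, I set $a := z_i - z_\ell$ with $|a|_{\TT} \geq A/n$ and exploit the combinatorial cancellation $\sum_{p,q=\pm 1} pq = 0$ together with the telescoping identity
\[
\sum_{p,q=\pm 1} pq \, V'_{\delta_n}(a + p d_i - q d_\ell) \;=\; \int_{-d_\ell}^{d_\ell}\bigl[V''_{\delta_n}(a - d_i + s) - V''_{\delta_n}(a + d_i + s)\bigr]\,ds.
\]
Since $\delta_n = o(1/n)$, for $n$ large one has $|a + p d_i - q d_\ell|_{\TT} \geq A/(2n)$, so $(V5)_{\textrm{I}}$ gives $|V''_{\delta_n}| \leq C n^2$ on the relevant set, hence each such term is bounded by $Cn^2 \delta_n$. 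Adding the external contribution $\tfrac12[U'(z_i+d_i) - U'(z_i-d_i)] = O(\|U''\|_\infty \delta_n)$ will give
\[
|\dot z_i^n(t)| \leq C n^2 \delta_n = o(1/n) \quad \text{on } [0,\tau^n],
\]
using the standing hypothesis $n^3 \delta_n \to 0$. An analogous computation for \eqref{f:dtdk} shows that the $\ell=i$ self-term reduces, by oddness of $V'_{\delta_n}$, to the restoring term $\tfrac{1}{n} V'_{\delta_n}(2 d_i^n)$; the $\ell \neq i$ terms are again $O(n^2 \delta_n) = o(1)$, and the $U$-contribution is $O(\|U'\|_\infty)$, yielding
\[
\dot d_i^n(t) = \frac{1}{n} V'_{\delta_n}(2 d_i^n(t)) + R_i^n(t), \qquad |R_i^n(t)| \leq C \text{ on } [0, \tau^n].
\]

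With these in hand, I would argue $\tau^n = T$ by contradiction: suppose $\tau^n < T$. At $\tau^n$ either (i) $\min_{i \neq j} |z_i^n - z_j^n|_{\TT} = A/n$, or (ii) $\max_i |d_i^n| = \gamma \delta_n$. In case (i), integrating $|\dot z_i^n| = o(1/n)$ over $[0,T]$ gives $\max_i |z_i^n(\tau^n) - z_{\circ,i}^n| = o(1/n)$, so the gap is still $\geq 2A/n - o(1/n) > A/n$ --- contradiction. In case (ii), the assumption $(V3)_{\textrm{I}}$ together with the oddness of $V'_{\delta_n}$ shows that at the offending index the restoring term points strictly inward with magnitude $\geq c/(n\delta_n) \geq c n^2$, dominating $|R_i^n| \leq C$; hence $\mathrm{sgn}(d_i^n)\,\dot d_i^n(\tau^n) < 0$, so $|d_i^n|$ is strictly decreasing at $\tau^n$, again contradicting exit. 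For the narrow convergence, once $\tau^n = T$ is established, the uniform displacement bound $\sup_{t \in [0,T]}\max_i|x_i^{n,\pm}(t) - x_{\circ,i}^{n,\pm}| \leq T\cdot o(1/n) + 2\gamma \delta_n = o(1)$ implies that for any $\varphi \in C^1(\T)$,
\[
\sup_{t \in [0,T]}\Bigl|\int_\T \varphi\,d(\mu_n^\pm(t) - \mu_{\circ,n}^\pm)\Bigr| \;\leq\; \|\varphi'\|_\infty \cdot o(1) \xrightarrow{n\to\infty} 0,
\]
and density of $C^1(\T)$ in $C(\T)$ together with uniform boundedness of total mass yields narrow convergence.

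The main technical obstacle will be the $\dot z$ estimate: a naive bound via $(V4)_{\textrm{I}}$ would give the useless $|\sum_{p,q} pq V'_{\delta_n}| \lesssim 1/\delta_n$ per $\ell$. It is essential to combine the cancellation $\sum_{p,q} pq = 0$ of the zeroth-order contribution with the sharp $(V5)_{\textrm{I}}$ bound $|V''_{\delta_n}| \leq C n^2$ at scales $\geq c/n$. The two factors of $n$ coming from $V''_{\delta_n}$ and the extra factor $1/n$ needed to control the dipole-gap drift below $A/n$ together account for the scaling $n^3 \delta_n \to 0$.
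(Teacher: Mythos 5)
Your proposal is correct, and the two key estimates are exactly those of the paper: the cancellation $\sum_{p,q}pq=0$ combined with the $(V5)_{\textrm{I}}$ bound $|V''_{\delta_n}|\leq Cn^2$ at scales $\gtrsim 1/n$ to get $|\dot z_i|\leq Cn^2\delta_n=o(1/n)$, and the $(V3)_{\textrm{I}}$ restoring term $\tfrac1n V'_{\delta_n}(2d_i)$ dominating the $O(1)$ remainder to trap $|d_i|$ below $\gamma\delta_n$. Where you genuinely diverge is in how the invariance of the slow manifold is closed. The paper breaks the apparent circularity (``the good estimates hold only while the solution is in the good set'') by introducing the enlarged manifold $\Omega_2(A)$ with $\max_i|d_i|\leq 2\gamma\delta_n$, using the crude a priori bound $\|n\nabla E_n\|_\infty\leq C/\delta_n$ from $(V4)_{\textrm{I}}$ to keep the solution in $\Omega_2(A)$ on short windows $[\ell\delta_n^3,(\ell+1)\delta_n^3]$, running the improved estimates there, and iterating while tracking the cumulative loss $Cn^3\delta_n^4$ per window. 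Your first-exit-time argument replaces all of that: since $\Omega(A)$ is closed, the improved estimates hold up to and including $\tau^n$, the $z$-gap at $\tau^n$ is strictly above $A/n$ by the integrated displacement bound, and any index with $|d_i(\tau^n)|=\gamma\delta_n$ has $\tfrac{d}{dt}|d_i|<0$ there, so the solution re-enters $\Omega(A)$ on a right-neighbourhood of $\tau^n$, contradicting the infimum property. This is cleaner, avoids $\Omega_2$ and the time-slicing entirely, and in fact never invokes $(V4)_{\textrm{I}}$; the only point to state carefully is that the strict inequalities at $\tau^n$ must be verified for \emph{all} constraints simultaneously before invoking continuity, which your sketch does implicitly. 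The final step (narrow convergence via the uniform $o(1)$ displacement of each labelled particle) is the same as the paper's, which uses a modulus of continuity in place of your $C^1$-density argument.
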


From here on, to ease the notation, we will drop the explicit dependence on $n$ when it is clear from the context.

\begin{proof}
The main idea of the proof is that the evolution of $z$ is \textit{slow} on the manifold, namely the forces $\tfrac n2 \partial_z E_n(z,d)$ acting on $z$ are uniformly small. The forces on $d$ may instead be large, but point in the `right direction', i.e., there is a large basin of attraction to some $\tilde d(t)$ which is close to $0$.
\smallskip

The above  argument will be made precise by means of three main steps. To this aim, we define for $M > 0$
\begin{equation*}
\Omega_2 (M) 
:= \left\{ (z,d) \in \T^n: \min_{i \neq j} |z_i - z_j|_{{\TT}} \geq \frac Mn, \ \max_i | d_i | \leq 2 \gamma \delta_n \right\} 
\supset
\Omega (M).
\end{equation*}
In all our estimates the symbols $C, c$ denote positive constants which only depend on $U$, $V$, $A$ and the $n$-independent constants appearing in $(V2)_{\textrm{I}}$--$(V5)_{\textrm{I}}$.
\smallskip

\noindent
\textbf{Step 1: Behaviour of the gradient-flow equations \eqref{f:dtzk}-\eqref{f:dtdk} in $\Omega_2 (A)$.} 
In this step we prove that the right-hand side of \eqref{f:dtzk} satisfies the bound
\begin{equation} \label{estimate:slowz}
  \exists \, C > 0 : 
  \sup_{(z,d) \in \Omega_2 (A)} \| \tfrac n2 \,\partial_z E_n(z,d) \|_\infty \leq C n^2\delta_n,
  \text{ for all $n$ large enough};
\end{equation}
additionally, we rewrite the evolution of $d_i$ on $\Omega_2 (A)$ in a more convenient form.
\smallskip

To prove \eqref{estimate:slowz}, we note that by the symmetry of $V_{\delta_n}$ the term $\ell = i$ in the first sum in the right-hand side of \eqref{f:dtzk} is zero; for $\ell\neq i$, we estimate the summand by using the Mean Value Theorem on the sum over $q$:
\begin{align} \label{fp:bd:dtzk}
\bigg| \sum_{p=\pm1} p \sum_{q=\pm1} q V'_{\delta_n}(z_i - z_\ell + p d_i - q d_\ell)\bigg| 
= \bigg| \sum_{p=\pm1} 2 p d_\ell V''_{\delta_n}(\alpha_{i\ell}) \bigg| 
\leq 4 \|d\|_{\infty} |V''_{\delta_n}(\alpha_{i\ell})|,
\end{align}
where $|\alpha_{i\ell}-(z_i-z_\ell)|_{{\TT}} \leq 2 \|d\|_{\infty} \leq C \delta_n$. Hence, $\min_{i \neq \ell} |\alpha_{i\ell}|_{{\TT}}  \geq \frac{M}{2n}$ for all $n$ large enough. By the assumption $(V5)_{\textrm{I}}$ we then conclude from \eqref{fp:bd:dtzk} that 
$$
\bigg|\sum_{p,q=\pm1} pq V'_{\delta_n}(z_i - z_\ell + p d_i - q d_\ell)\bigg| \leq C n^2 \delta_n.
$$
For the forcing term (the second sum in the right-hand side of \eqref{f:dtzk}), by $(U)$ we estimate
\begin{align*}
|U'(z_i + d_i) - U'(z_i - d_i)| 
\leq 2 \| d \|_\infty \| U'' \|_{L^\infty(\T)}
\leq C \delta_n.
\end{align*}
This completes the proof of \eqref{estimate:slowz}.
\smallskip

Now we rewrite the evolution of the dipole width $d_i$, under the assumption that $(z,d) \in \Omega_2 (A)$ during the evolution. We claim that, in \eqref{f:dtdk}, the term corresponding to $\ell = i$ is the dominant term. By isolating this term, we rewrite the evolution of $d_i$ as
\begin{align} \label{Taylorode}
 \frac d{dt} d_i 
 = \frac1n V'_{\delta_n}(2d_i) + \frac1{2 n} \sum_{\substack{\ell=1\\ \ell\neq i}}^{n/2}\sum_{p,q=\pm1} q (- V'_{\delta_n} (z_i - z_\ell + p d_i - q d_\ell) )+ \frac12\sum_{p=\pm1}U'(z_i + p d_i),
\end{align} 
and the computations leading to the estimate \eqref{estimate:slowz} show that $|F_n^i(z,d)|\leq C$ for all $i$ and for large enough~$n$, where 
$$
F_n^i(z,d):= \frac1{2 n} \sum_{\substack{\ell=1\\ \ell\neq i}}^{n/2}\sum_{p,q=\pm1} q (- V'_{\delta_n} (z_i - z_\ell + p d_i - q d_\ell) )+ \frac12\sum_{p=\pm1}U'(z_i + p d_i).
$$
\smallskip

\noindent
\textbf{Step 2: The evolution \eqref{f:dtzk}-\eqref{f:dtdk} remains in the manifold~$\Omega(A)$.}
We claim that the solution $(z(t), d(t))$ of \eqref{f:dtzk}-\eqref{f:dtdk} with initial condition $(z_\circ, d_\circ) \in \Omega (2A)$ satisfies  $(z(t), d(t)) \in \Omega (A)$ for every $t\in [0,T]$, for large enough $n$. We first prove with \emph{a priori} bounds that $(z(t), d(t)) \in \Omega_2 (A)$ for every $t\in [0,\delta_n^3]$. Then, we use Step 1 to improve these bounds and show that $(z(t),d(t)) \in \Omega (2A - 2C n^3 \delta_n^4) \subset \Omega (A)$. In the final part we iterate this procedure in time to obtain the desired result on $[0, T]$.
\smallskip

\noindent
\textit{Step 2.1: The evolution with initial data in $\Omega (B)$, $B > A$, stays in $\Omega_2 (A)$ for small time.}
We claim that the solution $(\tilde z(t), \tilde d(t))$ of \eqref{f:dtzk}-\eqref{f:dtdk} with initial condition $(\tilde z_\circ, \tilde d_\circ) \in \Omega (B)$, with $B>A$, satisfies  $(\tilde z(t), \tilde d(t)) \in \Omega_2 (A)$ for  all $t\in [0, \delta_n^3]$, and for large enough $n$.
By applying the bound $(V4)_{\textrm{I}}$ for every term in the sum in the right-hand side of \eqref{pzEn}-\eqref{pdEn}, we obtain the \textit{a priori} estimate
$$
\| n \nabla E_n\|_{\infty} \leq C / \delta_n \text{ for all $n$ large enough}.
$$
Hence, the right-hand side of the gradient-flow evolutions \eqref{f:dtzk}-\eqref{f:dtdk} is bounded uniformly by $C / \delta_n$. Thus, for $t\in [0,\delta_n^3]$ we have that, since $(\tilde z_\circ, \tilde d_\circ) \in \Omega (B)$,
\begin{equation*}
  \|\tilde d(t)\|_\infty 
\leq \|\tilde d_\circ\|_\infty + C t / \delta_n  
\leq 2 \gamma \delta_n
\text{ for all $n$ large enough}.
\end{equation*}
A similar estimate for $\tilde z(t)$ allows us to deduce that 
\begin{equation*}
  \min_{i \neq j} |\tilde z_i(t) - \tilde z_j(t)|_{{\TT}} 
  \geq \min_{i \neq j} |\tilde z_{\circ,i} - \tilde z_{\circ,j}|_{{\TT}} - 2 \frac C{\delta_n} t
  \geq \frac{ B - 2 C n \delta_n^2 }n.
\end{equation*}
Hence, for $n$ large enough, $(\tilde z(t), \tilde d(t)) \in \Omega_2 (A)$ for every $t\in [0,\delta_n^3]$.

\smallskip

\noindent
\textit{Step 2.2: Improved estimates for $(\tilde z(t), \tilde d(t))$ for $t\in [0, \delta_n^3]$.} 
Since $(\tilde z(t), \tilde d(t)) \in \Omega_2 (A)$ for every $t\in [0,\delta_n^3]$, we obtain from \eqref{estimate:slowz} the improved estimate 
\begin{equation} \label{zt:est}
  \|\tilde z(t) - \tilde z_\circ\|_\infty \leq C n^2 \delta_n t \leq C n^2 \delta^4_n 
\end{equation}
which leads to
\begin{equation} \label{zij:est}
  \min_{i \neq j} |\tilde z_i(t) - \tilde z_j(t)|_{{\TT}}  
  \geq \frac{ B - 2C n^3\delta_n^4 }n \\ 
  \text{ for all $t\in [0,\delta_n^3]$ and all $n$ large enough}.  
\end{equation}
Moreover, we claim that 
\begin{equation} \label{dinf:est}
  \| \tilde d(t) \|_\infty \leq \gamma \delta_n
  \quad \text{for all $t\in [0,\delta_n^3]$ and all $n$ large enough}.
\end{equation}
Indeed, $\| \tilde d_\circ \|_\infty \leq \gamma \delta_n$, and by \eqref{Taylorode}  we have 
\begin{equation*}
 \frac d{dt} \tilde d_i = \frac1n V'_{\delta_n}(2 \tilde d_i) + F_n^i(\tilde z, \tilde d),
\end{equation*}
where, by Step 2.1, $|F_n^i( \tilde z (t), \tilde d (t) )| \leq C$ for all $t \in [0, \delta_n^3]$. We conclude by contradiction; suppose $\| \tilde d(t) \|_\infty > \gamma \delta_n$. Then, since $\tilde d \in C([0, \delta_n^3]; \T^{n/2})$, there exist $i \in \{1, \ldots, \tfrac n2\}$ and $t \in [0, \delta_n^3]$ such that $\tilde d_i (t) = p \gamma \delta_n$ and $p \tfrac d{dt} \tilde d_i (t) \geq 0$ for some $p \in \{-1, +1\}$. However, by \eqref{Taylorode} and $(V3)_{\textrm{I}}$, 
\begin{equation*}
  p \frac d{dt} \tilde d_i (t)
  = \frac pn V'_{\delta_n}(2 p \gamma \delta_n) + p F_n^i(\tilde z (t), \tilde d(t))
  \leq - \frac c{ n \delta_n } + C,
\end{equation*}
which is negative for all $n$ large enough. We conclude that \eqref{dinf:est} holds.

In conclusion, for every $t\in [0,\delta_n^3]$ we have that $(\tilde z(t), \tilde d(t)) \in \Omega (B - 2C n^3\delta_n^4)$.

\smallskip

\noindent
\textit{Step 2.3: Iteration.} We iterate Step 2.2 in the time intervals $[\ell \delta_n^3, (\ell +1) \delta_n^3]$ for $\ell =1,\dots, \left\lfloor  T / \delta_n^3 \right\rfloor$ to construct the solution $(z(t), d(t))$ with initial condition $(z_\circ, d_\circ) \in \Omega (2A)$. The corresponding values for $B$ are
\begin{equation*}
B = 2 (A - C (\ell + 1) n^3 \delta_n^4) 
\quad \text{for all } t\in (\ell \delta_n^3, (\ell +1) \delta_n^3].
\end{equation*}
Since $\ell \leq T / \delta_n^3$, we obtain $B \geq 2 (A - C T n^3 \delta_n) > A$ for all $n$ large enough. Hence, it follows from \eqref{zt:est}, \eqref{zij:est} and \eqref{dinf:est} that
\begin{equation} \label{zdt:nice}
\| z(t) - z_\circ\|_\infty \leq C n^2 \delta_n
\quad \text{ and } \quad
(z(t), d(t)) \in \Omega (A)
\end{equation}
 for all $t\in [0,T]$ and all $n$ large enough.

\medskip
\noindent
\textbf{Step 3: $\mu_n^\pm (t) - \mu_{\circ,n}^\pm \weakto 0$ as $n \to \infty$, uniformly in $[0, T]$.}
Let $\varphi \in C(\T)$ be an arbitrary test function, and let $\omega:[0,\infty)\to[0,\infty)$ be its modulus of continuity. Using \eqref{variables_zd}, we estimate
\begin{multline*}
  \bigg| \int_{\T} \varphi d \mu_n^+ (t) - \int_{\T} \varphi d \mu_{\circ,n}^+ \bigg|
  \leq \frac1n \sum_{i = 1}^{n/2} \big| \varphi \big( z_i(t) + d_i(t) \big) - \varphi ( z_{\circ, i} +  d_{\circ, i} ) \big| \\
  \leq \omega \big( \| z(t) - z_\circ \|_\infty + \| d(t) \|_\infty + \| d_\circ \|_\infty \big).
\end{multline*}
From \eqref{zdt:nice} we conclude that the right-hand side tends to $0$ as $n \to \infty$ pointwise for all $t \in [0,T]$. Similarly we obtain the convergence of $\mu_n^- (t) - \mu_{\circ,n}^-\weakto 0$ as $n \to \infty$, uniformly in $[0, T]$.
\end{proof}

\begin{cor} \label{c:CvB:1D}
Let $\delta_n$, $U$, $V$, $V_{\delta_n}$ and $T > 0$ be as in Theorem \ref{t:CvB:1D}. Let $\rho_\circ \in  \mathcal P(\T)\cap L^\infty (\T)$, and let $\mathcal{A}_\circ$ be the set of approximating sequences defined as 
\begin{align*}
\mathcal{A}_\circ:=\big\{(x_\circ^n,b_\circ^n)_{n\in \N}:& (x_\circ^n,b_\circ^n)\in \T^n\times \{\pm 1\}^n \ \textrm{for every } n\in \N, \exists \ A>0 \textrm{ such that }\\ 
&\quad (z_\circ^n, d_\circ^n) \in \Omega (2A) \textrm{ for large } n, \textrm{and } \mu_{\circ,n}^\pm \weakto \rho_\circ \textrm{ as } n\to \infty\big\},
\end{align*}
where $(z_{\circ}^n, d_{\circ}^n)$ and $\mu_{\circ,n}^\pm$ are defined in terms of $(x_{\circ}^n,b_\circ^n)$ via \eqref{variables_zd} and \eqref{fd:mun}, respectively. 
Then $\mathcal{A}_\circ \neq \emptyset$. Now assume that $U' \not \equiv 0$ on $\supp \rho_\circ$. Let $(x_{\circ}^n,b_\circ^n)\in \mathcal{A}_\circ$, and denote with $\mu_{n}^\pm (t)$ the empirical measures associated to the solution $x^n(t)$ of \eqref{for:evo:dyncs:edge:2D} with initial datum $(x_{\circ}^n, b_{\circ}^n)$. Then, the weak limit of $\mu_{n}^\pm (t)$, namely the measure $\brho = (\rho^+,\rho^-)$ such that $\mu_{n}^\pm (t)  \weakto \rho^\pm (t)$ as $n\to \infty$ for a.e.~$t \in (0,T)$, is not a solution of \eqref{f:GB-repeated} with initial datum $\brho_\circ=(\rho_\circ, \rho_\circ)$ in $(0,T)$.
\end{cor}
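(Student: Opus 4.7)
The plan splits into three steps: exhibiting a single admissible sequence in $\mathcal{A}_\circ$, identifying the narrow limit of $\mu_n^\pm(t)$, and contradicting the weak formulation of \eqref{f:GB-repeated}.

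\textit{Step 1: Non-emptiness of $\mathcal{A}_\circ$.} Fix $n$ even and set $F(x) := \int_0^x \rho_\circ(y)\,dy$. Since $\rho_\circ \in L^\infty(\T)$, $F$ is $\|\rho_\circ\|_\infty$-Lipschitz. I would choose midpoints $z_{\circ,k}^n$ as equispaced quantiles of $F$ (picking any measurable inverse branch on intervals where $\rho_\circ$ vanishes), and prescribe half-widths $d_{\circ,k}^n \in [0,\gamma\delta_n]$. The Lipschitz bound yields $|z_{\circ,k+1}^n - z_{\circ,k}^n|_{\TT} \geq 2/(n\|\rho_\circ\|_\infty)$, so $(z_\circ^n, d_\circ^n) \in \Omega(2A)$ with $A := 1/\|\rho_\circ\|_\infty$. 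A standard quantile argument shows that $\tfrac{1}{n}\sum_{k=1}^{n/2}\delta_{z_{\circ,k}^n}$ converges narrowly (modulo the $\tfrac12$ normalisation inherent to the dipole setup), and since the $\pm$-particles lie within $\gamma\delta_n \to 0$ of the midpoints, the same narrow limit is inherited by $\mu_{\circ,n}^\pm$, yielding $(x_\circ^n,b_\circ^n) \in \mathcal{A}_\circ$.

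\textit{Step 2: Identification of the limit.} For any $(x_\circ^n,b_\circ^n)\in\mathcal{A}_\circ$, the condition $(z_\circ^n,d_\circ^n)\in\Omega(2A)$ for large $n$ lets me invoke Theorem~\ref{t:CvB:1D}, producing $\mu_n^\pm(t) - \mu_{\circ,n}^\pm \weakto 0$ narrowly, uniformly in $t\in[0,T]$. Combining with $\mu_{\circ,n}^\pm \weakto \rho_\circ$, the pointwise narrow limit of $\mu_n^\pm(t)$ exists and equals $\rho_\circ$ for every $t$; hence $\brho(t) = (\rho_\circ,\rho_\circ)$ is stationary and $\kappa(t) := \rho^+(t)-\rho^-(t) \equiv 0$.

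\textit{Step 3: Failure of \eqref{f:GB-repeated}.} Suppose for contradiction that this stationary $\brho$ solves \eqref{f:GB-repeated} in the sense of Definition~\ref{def:GB-sol-isotropic}. Testing the equation for $\rho^+$ with $\varphi(t,x) = \eta(t)\psi(x)$, $\eta\in C_c^\infty([0,T))$, $\psi\in C^\infty(\T)$, using $\kappa\equiv 0$, and exploiting the exact cancellation between $\int_0^T\int_\T \rho_\circ\,\eta'(t)\psi\,dx\,dt = -\eta(0)\int_\T \rho_\circ\psi\,dx$ and the initial-datum term $\int_\T \rho_\circ\varphi(0,\cdot)\,dx$, the weak formulation collapses to
\[
\Bigl(\int_0^T \eta(t)\,dt\Bigr)\int_\T \rho_\circ(x)\,\psi'(x)\,U'(x)\,dx = 0.
\]
Since this must hold for all admissible $\eta,\psi$, we conclude $(\rho_\circ U')' = 0$ in $\mathcal{D}'(\T)$, i.e.\ $\rho_\circ U' \equiv c$ for some $c\in\R$. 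If $c=0$, then $U' = 0$ a.e.\ on $\supp \rho_\circ$, contradicting the hypothesis. If $c\neq 0$, then $\rho_\circ > 0$ a.e.\ on $\T$ and $U' = c/\rho_\circ$ has constant sign; but $\Z$-periodicity forces $\int_\T U'\,dx = 0$ and hence $U'\equiv 0$, contradicting $c\neq 0$.

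The main obstacle is Step~1: enforcing the separation $\min_{i\neq j}|z_{\circ,i}^n - z_{\circ,j}^n|_{\TT}\geq 2A/n$ uniformly in $n$, which is precisely where $\rho_\circ\in L^\infty$ is essential; without that the Lipschitz constant of $F$ diverges and the dipoles cannot be positioned on the slow manifold $\Omega(2A)$. Steps~2--3 are then essentially bookkeeping.
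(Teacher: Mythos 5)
Your proposal is correct and follows essentially the same route as the paper: the quantile construction of equispaced dipoles with separation $2/(n\|\rho_\circ\|_\infty)$ for non-emptiness of $\mathcal{A}_\circ$, and Theorem~\ref{t:CvB:1D} to identify the stationary limit $(\rho_\circ,\rho_\circ)$. The only difference is that your Step~3 (testing with $\eta(t)\psi(x)$ to deduce $(\rho_\circ U')'=0$, then ruling out both $c=0$ and $c\neq 0$ via periodicity) carefully proves the claim that the paper dismisses with ``it is easy to see that $\brho_\circ$ is not a stationary solution''; that argument is sound and is a worthwhile addition.
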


\begin{proof} 
It is easy to see that $\brho_\circ = (\rho_\circ, \rho_\circ)$ is not a stationary solution to \eqref{f:GB-repeated}. Thus it suffices to show that $\mu_{n}^\pm (t) - \mu_{\circ,n}^\pm \weakto 0$ as $n \to \infty$. This property is guaranteed by Theorem \ref{t:CvB:1D}.

It remains to show that $\mathcal{A}_\circ \neq \emptyset$. We set $x_{\circ, 1}^{n,+} := 0$, and we choose the positions of the other particles iteratively, so that
\begin{equation*}
  \int_{x_{\circ, i}^{n,+}}^{x_{\circ, i+1}^{n,+}} \rho_\circ^+(x)\, dx = \frac2n
  \quad \text{for all } i = 1,\ldots, \frac n2 - 1.
\end{equation*}
Then, we set $x_{\circ}^{n,-} := x_{\circ}^{n,+}$. By construction, $\mu_{\circ,n}^\pm \weakto \rho_\circ^\pm$ as $n \to \infty$, and $z_\circ^n = x_{\circ}^{n,+}$ and $d_\circ^n = 0$.
Since $\rho_\circ \in L^\infty (\T)$, we obtain that $\min_i |z_{\circ,i+1}^n - z_{\circ,i}^n |_{{\TT}} \geq (n \| \rho_\circ \|_\infty )^{-1}$. Using that $z_\circ^n$ is ordered, we conclude that $(z_{\circ}^n, d_{\circ}^n) \in \Omega ( 1 / \| \rho_\circ \|_\infty )$.
\end{proof}

\begin{rem}[Sharper estimates under higher regularity of $V$]
\label{ss:cvb:1D:sharp}
A careful inspection of the proof of Theorem \ref{t:CvB:1D} shows that the result holds true under the weaker assumption $n^{3/2} delta_n$,
provided $(V2)_{\textrm{I}}$ and $(V5)_{\textrm{I}}$ are replaced by the stronger conditions
\begin{itemize}
\item[$(V2')_{\textrm{I}}$] $V_{\delta_n} \in C^3(\T)$ is even;
\item[$(V5')_{\textrm{I}}$]  $\forall \, c > 0 \ \exists \, C > 0 \ \forall \, |s|_{\TT} > c/n : | V^{(3)}_{\delta_n} (s) | \leq C / s^3$.
\end{itemize}
Note that $(V5')_{\textrm{I}}$ is satisfied by our model example of $V$, so it reasonable to assume that it is satisfied by the regularised potential $V_{\delta_n}$ as well.

In \eqref{fp:bd:dtzk} we can interpret the sum over $p$ and $q$ as a four-point approximation scheme of the third derivative $V^{(3)}_{\delta_n} (z_i - z_\ell)$, and since $|z_i- z_\ell|_{{\TT}} \geq c |\tfrac{i - \ell}n |$, by $(V5')_{\textrm{I}}$ the bound in \eqref{fp:bd:dtzk} results in 
\begin{align} \label{RHSz:sharp}
\bigg| \sum_{p,q=\pm1} p q V'_{\delta_n}(z_i - z_\ell + p d_i - q d_\ell)\bigg| 
\leq C \|d\|_{\infty}^2 \max_{|s|_{{\TT}} \geq c \big|\tfrac{i-\ell}n\big| } \big| V^{(3)}_{\delta_n}(s) \big|
\leq C \frac{ n^3 \delta_n^2 }{|i - \ell|^3}.
\end{align}
By using \eqref{RHSz:sharp} for each term $\ell \neq i$ in the first sum in \eqref{f:dtzk}, we get the improved estimate 
 \begin{equation}\label{RHSz:sharp2}
   \sup_{(z,d) \in \Omega_2 (A)} \| \tfrac n2 \,\partial_z E_n(z,d) \|_\infty \leq C n^2 \delta_n^2
 \end{equation}
 instead of \eqref{estimate:slowz}. Proceeding as in the proof of Theorem \ref{t:CvB:1D} and by using the improved estimate \eqref{RHSz:sharp2}, we obtain
\begin{equation*} 
\| \tilde z(t) - \tilde z_\circ\|_\infty \leq C n^2 \delta^2_n
\quad \text{ and } \quad
(z(t), d(t)) \in \Omega (A) 
\end{equation*}
for all $t\in [0,T]$ and all $n$ large enough, instead of \eqref{zdt:nice}, provided $n^{3/2} \delta_n \to 0$.
\end{rem}

\subsubsection{The two-dimensional case}
\label{ss:cvb:C1}
In this short section we only stress the differences with the one-dimensional case considered above.

First of all, we introduce the subset of $(\T^n)^2$ where we study the evolution, namely the \textit{slow manifold}. In analogy with \eqref{OmegaM1}, for any constant $M>0$, we define the set 
$\Omega(M)\subset (\T^2)^n$ as 
\begin{equation*}
\Omega (M) := \left\{ (z,d) \in (\T^2)^n: \min_{i \neq j} \|z_i - z_j\|_{\Tt} \geq \frac{M}{\sqrt n}, \,\, d_i \in B_n \,\, \forall \, i \right\},
\end{equation*}
where $B_n$ is the region whose boundary is the `trapping' curve $\Gamma_n$ in $(V3)_{\textrm{I}}$.

\smallskip

The following theorem is the counterpart of Theorem \ref{t:CvB:1D} and Corollary \ref{c:CvB:1D} for $d=2$.

\begin{thm} \label{t:CvB:2D}
Let $(\delta_n)_{n\in \N}\subset (0,\infty)$ be a sequence such that $n^2 \delta_n \to 0$ as $n\to \infty$. Let $U$, $V$ and $V_{\delta_n}$ satisfy conditions $(V1)_{\textrm{I}}$-$(V5)_{\textrm{I}}$ and $(U)_{\textrm{I}}$, and let $T>0$ be fixed. Let $\rho_\circ \in  \mathcal P(\T^2)\cap L^\infty (\T^2)$, and let $\mathcal{A}_\circ$ be the set of approximating sequences defined as 
\begin{align*}
\mathcal{A}_\circ:=\big\{(x_\circ^n,b_\circ^n)_{n\in \N}:& (x_\circ^n,b_\circ^n)\in (\T^2)^n\times \{\pm 1\}^n \ \textrm{for every } n\in \N, \exists \ A>0 \textrm{ such that }\\ 
&\quad (z_\circ^n, d_\circ^n) \in \Omega (2A) \textrm{ for large } n, \textrm{and } \mu_{\circ,n}^\pm \weakto \rho_\circ \textrm{ as } n\to \infty\big\},
\end{align*}
where $(z_{\circ}^n, d_{\circ}^n)$ and $\mu_{\circ,n}^\pm$ are defined in terms of $(x_{\circ}^n,b_\circ^n)$ via \eqref{variables_zd} and \eqref{fd:mun}, respectively. 
Then $\mathcal{A}_\circ \neq \emptyset$. Now assume that $\nabla U \not \equiv 0$ on $\supp \rho_\circ$. Let $(x_{\circ}^n,b_\circ^n)\in \mathcal{A}_\circ$, and denote with $\mu_{n}^\pm (t)$ the empirical measures associated to the solution $x^n(t)$ of \eqref{for:evo:dyncs:edge:2D} with initial datum $(x_{\circ}^n, b_{\circ}^n)$. Then, the weak limit of $\mu_{n}^\pm (t)$, namely the measure $\brho = (\rho^+,\rho^-)$ such that $\mu_{n}^\pm (t)  \weakto \rho^\pm (t)$ as $n\to \infty$ for a.e.~$t \in (0,T)$, is not a solution of \eqref{f:GB-repeated} with initial datum $\brho_\circ=(\rho_\circ, \rho_\circ)$ in $(0,T)$.
\end{thm}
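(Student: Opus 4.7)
\textbf{Proof proposal for Theorem \ref{t:CvB:2D}.} The plan is to follow the strategy of the proof of Theorem \ref{t:CvB:1D} and Corollary \ref{c:CvB:1D}, adapting the key estimates to the two-dimensional setting where the typical dipole-dipole distance is of order $n^{-1/2}$ rather than $n^{-1}$, and where the scalar barrier argument for the dipole width must be replaced by an inward-normal argument on the curve $\Gamma_n$. As a first step, I would introduce the dipole variables $(z,d)\in(\T^2)^n$ exactly as in \eqref{variables_zd}, rewrite the gradient-flow system \eqref{for:evo:dyncs:edge:2D} in these variables (obtaining the vector analogues of \eqref{f:dtzk}-\eqref{f:dtdk}), and define the enlarged slow manifold $\Omega_2(M)\supset\Omega(M)$ by replacing $B_n$ with a slightly larger trapping region (for instance a tubular $\delta_n$-neighbourhood of $B_n$, still contained in a ball of radius $O(\delta_n)$). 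Since $V_{\delta_n}$ is even, the self-interaction terms $\ell=i$ in $\partial_{z_i}E_n$ cancel, while for $\ell\neq i$ a Mean Value expansion in the $q$-variable gives
\[
\sum_{q=\pm1}q\,\nabla V_{\delta_n}(z_i-z_\ell+pd_i-qd_\ell)=-2\,D^2V_{\delta_n}(\alpha_{i\ell,p})\,d_\ell,
\]
with $\alpha_{i\ell,p}$ satisfying $\|\alpha_{i\ell,p}-(z_i-z_\ell)\|_{\Tt}\leq 2\|d\|_\infty=O(\delta_n)$. On $\Omega_2(A)$ we have $\|z_i-z_\ell\|_{\Tt}\geq A/(2\sqrt n)$ for $n$ large, so by $(V5)_{\textrm{I}}$ each such term is bounded by $Cn\delta_n$; summing over $\ell\neq i$ and combining with the analogous Lipschitz bound on the $U$-contribution yields the two-dimensional analogue of \eqref{estimate:slowz},
\[
\sup_{(z,d)\in\Omega_2(A)}\big\|\tfrac n2\,\partial_zE_n(z,d)\big\|_\infty\leq Cn\delta_n.
\]

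Next I would replicate the three-substep structure of Step 2 of the proof of Theorem \ref{t:CvB:1D}. By the crude \emph{a priori} bound $\|n\nabla E_n\|_\infty\leq C/\delta_n$ from $(V4)_{\textrm{I}}$, a solution starting in $\Omega(B)$ with $B>A$ stays in $\Omega_2(A)$ for a short time interval of length $\delta_n^2$ (say), which upgrades the $z$-estimate to $\|z(t)-z_\circ\|_\infty\leq Cn\delta_n\cdot t$, well below $1/\sqrt n$ thanks to the assumption $n^2\delta_n\to 0$. The main obstacle is the trapping argument for the dipole widths $d_i$ in 2D: isolating the self-term $\ell=i$ in the $d$-equation, and using the same Mean Value estimates as above to bound the remaining contribution by a universal constant independent of $n$, I would obtain
\[
\frac{d}{dt}d_i=\frac1n\nabla V_{\delta_n}(2d_i)+G_n^i(z,d),\qquad \sup_{\Omega_2(A)}|G_n^i|\leq C.
\]
If $d_i(t_\ast)\in\Gamma_n$ for some first exit time $t_\ast$, then $\nu_n(d_i(t_\ast))\cdot\dot d_i(t_\ast)\geq 0$; but by $(V3)_{\textrm{I}}$ this quantity equals $\frac1n\nabla V_{\delta_n}(2d_i(t_\ast))\cdot\nu_n+\nu_n\cdot G_n^i\leq -\frac{c}{n\delta_n}+C$, which is strictly negative for $n$ large. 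This contradiction shows $d_i(t)\in B_n$ throughout, and iterating the above on intervals of length $\delta_n^2$ across $[0,T]$ (with $B_\ell=2A-C\ell n^2\delta_n^3\geq A$ as long as $n^2\delta_n\to 0$) gives $(z(t),d(t))\in\Omega(A)$ and $\|z(t)-z_\circ\|_\infty=o(1/\sqrt n)$ uniformly in $t\in[0,T]$.

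Finally, the continuity-of-test-function argument of Step 3 in the proof of Theorem \ref{t:CvB:1D} applies verbatim in $\T^2$ and yields $\mu_n^\pm(t)-\mu_{\circ,n}^\pm\weakto 0$ narrowly, uniformly in $t\in[0,T]$. To check that $\mathcal A_\circ\neq\emptyset$, I would tile $\T^2$ into squares of side $(\lfloor\sqrt{n/2}\rfloor)^{-1}$ and distribute $\tfrac n2$ dipole mid-points $z_{\circ,i}^n$ according to the mass of $\rho_\circ$ on each cell (possible since $\rho_\circ\in L^\infty(\T^2)$ implies a lower bound $\geq A/\sqrt n$ on the nearest-neighbour distance), with half-widths $d_{\circ,i}^n=0$; the resulting empirical measures $\mu_{\circ,n}^\pm$ are equal and converge weakly to $\rho_\circ$. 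For the non-convergence part, combining the previous paragraph with $\mu_{\circ,n}^\pm\weakto\rho_\circ$ shows $\mu_n^\pm(t)\weakto\rho_\circ$ uniformly in $t$, so the limiting curve is the stationary $\brho(t)\equiv(\rho_\circ,\rho_\circ)$, for which $\kappa=\rho^+-\rho^-\equiv0$ and \eqref{f:GB-repeated} reduces to $\partial_t\rho^\pm=\pm\div(\rho_\circ\nabla U)$. Since $\rho_\circ$ is not identically zero on the set where $\nabla U\not\equiv 0$, this distribution is nonzero, so $\brho$ fails to solve \eqref{f:GB-repeated} with initial datum $(\rho_\circ,\rho_\circ)$.
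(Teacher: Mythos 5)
Your proposal is correct and follows essentially the same route as the paper, which itself presents this theorem only as "a straightforward adaptation" of the one-dimensional argument; you correctly identify the one genuinely new quantitative point, namely that $(V5)_{\textrm{I}}$ with $d=2$ together with the separation $\|z_i-z_\ell\|_{\Tt}\gtrsim n^{-1/2}$ improves the slow-manifold estimate to $\sup_{\Omega_2(A)}\|\tfrac n2\partial_z E_n\|_\infty\leq Cn\delta_n$, which is exactly what yields the weaker hypothesis $n^2\delta_n\to 0$. The remaining ingredients (the inward-normal trapping argument on $\Gamma_n$ via $(V3)_{\textrm{I}}$, the tiling construction showing $\mathcal A_\circ\neq\emptyset$, and the observation that the stationary limit $(\rho_\circ,\rho_\circ)$ cannot solve \eqref{f:GB-repeated} when $\nabla U\not\equiv 0$ on $\supp\rho_\circ$) all match the paper's intended adaptation, and in fact you supply more detail than the published proof does.
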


\begin{proof}
The proof of Theorem \ref{t:CvB:2D} follows by a straightforward adaptation of the proof of Theorem \ref{t:CvB:1D} to the two-dimensional setting. 

Note that the estimate \eqref{estimate:slowz} holds true, in the two-dimensional setting, with the bound $C n \delta_n$ (instead of $C n^2 \delta_n$), and this leads to the weaker condition $n^2 \delta_n \to 0$ for $\delta_n$.
\end{proof}

\medskip

\begin{rem}[Sharper estimates under higher regularity]
The extension discussed in Remark~\ref{ss:cvb:1D:sharp} has a two-dimensional equivalent. The bound on $D^3 V_{\delta_n}$ provides again an additional factor of $\delta_n$ in the right-hand side of \eqref{estimate:slowz}. To use the condition $\min_{i \neq j} \|z_i - z_j\|_{\Tt} \geq c / \sqrt n$, let us fix the index $i$, and relabel $z_\ell$ such that $\|z_i - z_\ell\|_{\Tt}$ is increasing in $\ell$ for $\ell \neq i$. Since the balls $B(z_\ell, c/(2 \sqrt n))$ are disjoint, for all $R \geq c/(2 \sqrt n)$ the ball $B(z_i, R)$ contains at most $\lfloor CnR^2 \rfloor$ points $z_\ell$. In other words, there are at most $\lfloor CnR^2 \rfloor$ points $z_\ell$ such that $\|z_i - z_\ell\|_{\Tt} \leq R$. 
Hence, $\|z_i - z_\ell\|_{\Tt} \geq c'' \sqrt{ \ell/n }$ for some $c'' > 0$. We use this estimate to  replace \eqref{RHSz:sharp} by
\begin{align*} 
\bigg| \sum_{p,q=\pm1} p q V'_{\delta_n}(z_i - z_\ell + p d_i - q d_\ell)\bigg| 
\leq C \delta_n^2 \Big( \frac{n}{\ell} \Big)^{\tfrac32},
\end{align*}
Then, \eqref{estimate:slowz} becomes
 \begin{equation*}
   \sup_{(z,d) \in \Omega_2 (A)} \| \tfrac n2 \,\partial_z E_n(z,d) \|_\infty \leq C \delta_n ( \tfrac1n + \sqrt n \, \delta_n ),
 \end{equation*}
which leads to the weaker condition on $\delta_n$ given by $n^{5/4} \delta_n \to 0$. 
\end{rem}

\subsection{The two-dimensional case, slip-place-confined motion} \label{ss:cvb:C2}
This setting is the one inspired by the case of edge dislocations, whose interaction potential in $\R^2$ is $V_{\text{edge}} (x) = -\log |x| + (e_1 \cdot x / |x|)^2$ (see, e.g., (5-16) in \cite{HirthLothe82}) as discussed in Section~\ref{subsec:description-confined}. It is also the case corresponding exactly to the Groma-Balogh evolution equations.
\smallskip

Given some $\delta_n > 0$, we define the slow manifold as
\begin{equation*}
\Omega (M) := \left\{ (z,d) \in (\T^2)^n: \min_{i \neq j} \|z_i - z_j\|_{\Tt} \geq \frac{M}{\sqrt n}, \,\, \max_i | d_i | < \delta_n \,\, \text{and } \, \min_i |d_i\cdot e_2| > 0 \right\}.
\end{equation*}

\smallskip

The following theorem is the counterpart of Theorem \ref{t:CvB:1D} and Corollary \ref{c:CvB:1D} for a slip-confined evolution in dimension $d=2$. Note that, unlike in Section~\ref{ss:cvb:1D}, we do not consider a regularised potential $V_{\delta_n}$, but we deal directly with $V$.

\begin{thm} \label{t:CvB:2D-confined}
Let $(\delta_n)_{n\in \N}\subset (0,\infty)$ be a sequence such that $n^2 \delta_n \to 0$ as $n\to \infty$. Let $U$ satisfy $(U)_{\textrm{I}}$, let $V$ be as in Case 2, and let $T > 0$ be fixed. Let $\rho_\circ \in \mathcal P(\T^2)\cap L^\infty (\T^2)$, and let $\mathcal{A}_\circ$ be the set of approximating sequences defined as 
\begin{align*}
\mathcal{A}_\circ:=\big\{(x_\circ^n,b_\circ^n)_{n\in \N}:& (x_\circ^n,b_\circ^n)\in (\T^2)^n\times \{\pm 1\}^n \ \textrm{for every } n\in \N, \exists \ A>0 \textrm{ such that }\\ 
&\quad (z_\circ^n, d_\circ^n) \in \Omega (2A) \textrm{ for large } n, \textrm{and } \mu_{\circ,n}^\pm \weakto \rho_\circ \textrm{ as } n\to \infty\big\},
\end{align*}
where $(z_{\circ}^n, d_{\circ}^n)$ and $\mu_{\circ,n}^\pm$ are defined in terms of $(x_{\circ}^n,b_\circ^n)$ via \eqref{variables_zd} and \eqref{fd:mun}, respectively. 
Then $\mathcal{A}_\circ \neq \emptyset$. Now assume that $\partial_1 U \not \equiv 0$ on $\supp \brho_\circ$. Let $(x_{\circ}^n,b_\circ^n)\in \mathcal{A}_\circ$, and denote with $\mu_{n}^\pm (t)$ the empirical measures associated to the solution $x^n(t)$ of the unregularised evolution \eqref{for:evo:dyncs:edge:2D-confined}, with initial datum $(x_{\circ}^n, b_{\circ}^n)$. Then, the weak limit of $\mu_{n}^\pm (t)$, namely the measure $\brho = (\rho^+,\rho^-)$ such that $\mu_{n}^\pm (t)  \weakto \rho^\pm (t)$ as $n\to \infty$ for a.e.~$t \in (0,T)$, is not a solution of \eqref{f:GB-slip-plane-confined} with initial datum $\brho_\circ=(\rho_\circ, \rho_\circ)$ in $(0,T)$.
\end{thm}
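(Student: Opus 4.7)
The proof follows the template of Theorems \ref{t:CvB:1D} and \ref{t:CvB:2D}, with adaptations for the slip-plane confinement and the use of the unregularised potential $V$ (so here $\delta_n$ plays the role of the dipole size rather than a regularisation scale). Passing to dipole coordinates \eqref{variables_zd}, the evolution \eqref{for:evo:dyncs:edge:2D-confined} reduces to a system for the horizontal components $(z^1,d^1)\in\T^{n/2}\times\T^{n/2}$ alone, since slip-plane confinement forces $z_i^2=z_{\circ,i}^2$ and $d_i^2=d_{\circ,i}^2$ to be constant in time. The strict positivity $\min_i|d_{\circ,i}\cdot e_2|>0$ is therefore preserved, so opposite-sign dislocations within a dipole remain vertically separated, and $V$ is never evaluated at its singularity.

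On an enlarged slow manifold $\Omega_2(A)$ (with $\max_i|d_i|\leq 2\delta_n$), the horizontal force on each midpoint $z_i$ satisfies $|\tfrac{dz_i^1}{dt}|\leq C n\delta_n$. The proof mirrors \eqref{estimate:slowz} and \eqref{fp:bd:dtzk}: for $\ell\neq i$, the mean value theorem in $q$ gives
\[
\bigg|\sum_{p,q=\pm 1}pq\,\partial_1V(z_i-z_\ell+pd_i-qd_\ell)\bigg|\leq 4\|d\|_\infty|D^2V(\alpha_{i\ell})|,
\]
with $\|\alpha_{i\ell}-(z_i-z_\ell)\|_{\Tt}\leq 2\|d\|_\infty\leq 4\delta_n$, so $\|\alpha_{i\ell}\|_{\Tt}\geq A/(2\sqrt n)$ for $n$ large. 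By $(V1)_{\textrm{I}}$, $|D^2V(\alpha_{i\ell})|\leq Cn/A^2$; summing over $n/2$ indices $\ell$ and dividing by $2n$ produces the bound $Cn\delta_n$, while $U\in C^2$ contributes a further $O(\delta_n)$. Hence $\|z^1(t)-z_\circ^1\|_\infty\leq Cn\delta_n\,t$ as long as the trajectory remains in $\Omega_2(A)$.

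Controlling the horizontal dipole half-width $d_i^1$ is more delicate, because the self-interaction $\tfrac1n\partial_1V(2d_i)$ is \emph{destabilising} at vertical dipoles under the edge potential: a direct computation with $V_{\mathrm{edge}}(x)=-\log|x|+(x\cdot e_1)^2/|x|^2$ shows that, with $d_i^2$ fixed, the function $d^1\mapsto V(2d^1,2d_i^2)$ has its local maxima at $|d^1|=|d_i^2|$, so the stable equilibria of the $d_i^1$-evolution are $d^1=\pm d_i^2$ rather than $d^1=0$. We therefore choose the approximating initial data with $d_{\circ,i}=\tfrac{\delta_n}{2}(e_1+e_2)$, so that $|d_{\circ,i}|=\delta_n/\sqrt 2<\delta_n$, the vertical component $d_i^2=\delta_n/2>0$ is preserved, and $d_{\circ,i}^1$ sits at a stable equilibrium. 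Linearising the self-interaction around this equilibrium (the effective slope of $\tfrac1n\partial_1V(2d_i)$ in $d_i^1$ is of order $-1/(n\delta_n^2)$) and combining with the $O(n\delta_n)$ perturbation from other dipoles and from $U$ yields $|d_i^1(t)-\delta_n/2|=O(n^2\delta_n^3)=o(\delta_n)$. Iterating in windows $[\ell\delta_n^3,(\ell+1)\delta_n^3]$ as in Step~2 of the proof of Theorem \ref{t:CvB:1D} then confirms $(z(t),d(t))\in\Omega(A)$ throughout $[0,T]$ once $n^2\delta_n\to 0$.

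The weak convergence $\mu_n^\pm(t)-\mu_{\circ,n}^\pm\weakto 0$ uniformly in $t\in[0,T]$ follows exactly as in Step~3 of the proof of Theorem \ref{t:CvB:1D}, so the limit satisfies $\brho(t)\equiv\brho_\circ=(\rho_\circ,\rho_\circ)$; the hypothesis $\partial_1 U\not\equiv 0$ on $\supp\brho_\circ$ then prevents $\brho_\circ$ from being stationary for \eqref{f:GB-slip-plane-confined}, yielding the desired non-convergence. Nonemptyness of $\mathcal A_\circ$ is verified by distributing the midpoints $z_{\circ,i}$ via a two-dimensional adaptation of the inverse-CDF construction of Corollary \ref{c:CvB:1D} (e.g.\ along horizontal slices conditioned on the marginals of $\rho_\circ$) and combining with the fixed tilt $d_{\circ,i}=\tfrac{\delta_n}{2}(e_1+e_2)$. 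The principal obstacle is precisely the instability of vertical dipoles under the unregularised edge potential, which forces the choice of tilted initial dipoles rather than vertical ones as in the regularised isotropic case; this tilt can be sustained in time only thanks to slip-plane confinement, which freezes $d_i^2$ and thus locks the stable equilibrium in place.
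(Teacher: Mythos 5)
Your template is the right one, and you correctly isolate the two structural facts that make the unregularised potential tractable: slip confinement freezes each $d_i\cdot e_2$, so $V$ is never evaluated inside $B(0,r_n)$ with $r_n=\min_i|d_{\circ,i}\cdot e_2|>0$, and the only real task is to confine the horizontal half-widths $d_i^1$. Your computation of the equilibria of $d^1\mapsto V_{\mathrm{edge}}(2d^1,2d_i^2)$ (maxima at $d^1=\pm d_i^2$, minimum at $d^1=0$) is also correct. The gap is in how you use it. The theorem asserts the conclusion for \emph{every} $(x_\circ^n,b_\circ^n)\in\mathcal A_\circ$, and membership in $\mathcal A_\circ$ only requires $(z_\circ^n,d_\circ^n)\in\Omega(2A)$, i.e.\ $\max_i|d_{\circ,i}|<\delta_n$ and $\min_i|d_{\circ,i}\cdot e_2|>0$; you are not free to impose $d_{\circ,i}=\tfrac{\delta_n}2(e_1+e_2)$. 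In particular $\mathcal A_\circ$ contains purely vertical dipoles ($d_{\circ,i}^1=0$), which by your own analysis sit at an \emph{unstable} equilibrium, and your linearisation around the stable equilibrium gives no control there. The paper's substitute for $(V3)_{\textrm{I}}$ is a one-sided barrier rather than a stability statement: writing $W:=V-V_{\mathrm{edge}}$, which is biharmonic and hence smooth on $(-1,1)^2$, one gets $\partial_1V(2\delta,h)\le -\tfrac6{25\delta}+\|\partial_1W\|_{L^\infty([-1/2,1/2]^2)}$ for all $|h|\le\delta$. This traps $|d_i^1(t)|$ at $O(\delta_n)$ \emph{wherever} it starts in $\Omega(2A)$: the instability of the vertical configuration is harmless, because $d_i^1$ can only drift out to $O(|d_i^2|)=O(\delta_n)$ before the barrier stops it, and the argument only needs $\|d(t)\|_\infty\le C\delta_n$, not $d(t)\approx d_\circ$. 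As written, your construction establishes $\mathcal A_\circ\neq\emptyset$ and non-convergence for one specially tilted subfamily, not for all of $\mathcal A_\circ$; your closing claim that the tilt is \emph{forced} is the misstep.

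Two further points. First, the crude a priori velocity bound of Step 2.1 of Theorem \ref{t:CvB:1D} now scales like $C/r_n$ rather than $C/\delta_n$, since the self-interaction is controlled only by the distance $2|d_i\cdot e_2|\ge 2r_n$ to the singularity, and $\Omega(M)$ imposes no lower bound on $r_n$. The time windows $[\ell\delta_n^3,(\ell+1)\delta_n^3]$ must be shrunk in an $r_n$-dependent way; the iteration still closes because the improved Step 2.2 estimates are $r_n$-independent, but this has to be addressed. Second, your bookkeeping of the perturbation in the $d_i^1$-equation is off: by \eqref{f:dtdk} the external-field contribution to $\tfrac d{dt}d_i$ is $\tfrac12\sum_{p=\pm1}\partial_1U(z_i+pd_i)\approx\partial_1U(z_i)=O(1)$ (there is no sign cancellation in $p$ here, unlike in the $z$-equation), not $O(n\delta_n)$. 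Balancing $O(1)$ against a restoring slope of order $1/(n\delta_n^2)$ gives a displacement $O(n\delta_n^2)=o(\delta_n)$, so the quantitative conclusion survives, but not for the reason you state.
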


\begin{proof} We start by establishing some properties of $V$ that will enable us to proceed similarly as in the proof of Theorem \ref{t:CvB:1D}. 

Note that $V$ `almost' satisfies assumptions $(V1)_{\textrm{I}}$--$(V5)_{\textrm{I}}$. Indeed it does satisfy $(V1)_{\textrm{I}}$ and $(V5)_{\textrm{I}}$; moreover, $V \in C^2(\T^2 \setminus \{0\})$ and is even, which gives almost $(V2)_{\textrm{I}}$. Instead of $(V3)_{\textrm{I}}$, we prove a sufficient upper bound on $\partial_1 V$ around $0$.  With this aim, we observe from \eqref{VR2eqn} and \eqref{VT2eqnOnR2} that $W := V - V_{\text{edge}}$ is biharmonic on $(-1,1)^2$, and thus smooth on the closed square $Q_{1/2}:=[-1/2,1/2]^2$. Hence, for all $\delta$ small enough,
$$ 
\partial_1 V (2 \delta, h) \leq - \frac 6{25 \delta} + 
\| \partial_1 W \|_{L^\infty (Q_{1/2})} \quad \text{for all } |h| \leq \delta.
$$
Finally, instead of $(V4)_{\textrm{I}}$, we have that $s V(s)$ is bounded in $\T^2$.

Given any $(z^n, d^n) \in \Omega (M)$, the singular potential $V$ is never evaluated on $B(0, r_n)$, where 
\begin{equation*}
  r_n := \min_i |d_i \cdot e_2| > 0.
\end{equation*}
Hence the extension of Theorem \ref{t:CvB:1D} follows similarly as in Section~\ref{ss:cvb:C1}, with $V_{\delta_n}$ replaced by $V$. In particular, we note that the equivalent of the a priori estimates in Step 2.1 depends on $r_n$. Hence, the time interval $[0, \delta_n^3]$ may need to be shrunk when $r_n > 0$ is small. Nevertheless, thanks to the $r_n$-independently improved estimate in Step 2.2, the proof is easily adjusted to smaller time intervals.
\end{proof}

\begin{rem}
We note that with minor modifications to the proof it is possible to remove in Theorem \ref{t:CvB:2D-confined} the lower bound on $|d_i \cdot e_2|$ in $\Omega (M)$ (i.e., to allow the two dislocations in a dipole to reside on the same slip plane) at the cost of introducing a regularisation $V_{\delta_n}$ similarly as in Section~\ref{ss:cvb:C1}. 
\end{rem}

\renewcommand{\thesection}{A}

\section{Appendix: Orlicz spaces and embeddings}\label{sect:Orlicz}

\renewcommand{\theequation}{A.\arabic{equation}}
\renewcommand{\thethm}{A.\arabic{thm}}

In this section we recall the definition and some properties of Orlicz spaces that we have used throughout the paper. This functional framework was used by~\cite{CannoneEl-HajjMonneauRibaud10} to prove existence results for the Groma-Balogh equations~\eqref{f:GB-slip-plane-confined}.

\medskip

For brevity, we focus on  real-valued Lebesgue-measurable functions on the flat torus $\T^d$, for any $d \geq 1$. 
First we introduce Young functions (see~\cite[Sec.~3.1]{RaoRen-O} or~\cite[Sec.~3.1]{CannoneEl-HajjMonneauRibaud10}). A function $\phi: [0,+\infty) \to [0,+\infty] $ is a Young function if $\phi$ is continuous, convex, $\phi(0) = 0$, and $\lim_{t\to\infty}\phi(t)/t = +\infty$. The Orlicz class $K_\phi (\T^d)$ is the set of (equivalence classes of) measurable functions $g : \T^d \to \R$ satisfying $\int_{\T^d} \phi(|g(x)|)dx < \infty$. The Orlicz space $L_\phi (\T^d)$ is the linear hull of $K_\phi (\T^d)$ equipped with the Luxemburg norm
\begin{equation} \label{O:norm}
  \| g \|_{L_\phi (\T^d)}:= \inf \bigg\{ \lambda > 0 : \int_{\T^d} \phi \bigg( \frac{|g(x)|}\lambda \bigg)dx \leq 1 \bigg\},
\end{equation}
and is a Banach space. In general, $(L_\phi (\T^d), \| \cdot \|_{L_\phi (\T^d)})$ is neither separable nor reflexive. However, the closure in $L_\phi (\T^d)$ of bounded functions, denoted with $E_\phi (\T^d)$, is separable, and $E_\phi (\T^d)\subseteq K_\phi (\T^d) \subseteq L_\phi (\T^d)$.

For the choices 
\begin{equation*}
  \phi_\alpha (t) := e^{t^\alpha} - 1 \ \text{ with } \alpha \geq 1, 
  \quad \text{and} \quad 
  \phi^\beta (t) := t (\log(e + t))^\beta  \ \text{ with }  \beta \geq 0,
\end{equation*}
we denote
\begin{equation*}
  \Exp_\alpha (\T^d) := L_{\phi_\alpha} (\T^d), \quad \EExp_\alpha (\T^d) := E_{\phi_\alpha} (\T^d)
  \quad \text{and} \quad
  L \log^\beta L (\T^d) := L_{\phi^\beta} (\T^d).
\end{equation*}
It is easy to see that these Orlicz spaces are ordered, i.e., $\Exp_{\alpha_2} (\T^d) \subseteq \Exp_{\alpha_1} (\T^d)$ for all $1 \leq \alpha_1 \leq \alpha_2$, and $L \log^{\beta_2} L (\T^d) \subseteq L \log^{\beta_1} L (\T^d)$ for all $0 \leq \beta_1 \leq \beta_2$. For convenience, we set
\begin{equation*}
  \Exp (\T^d) := \Exp_1 (\T^d),
   \quad
  L \log L (\T^d) := L \log^1 L (\T^d), \quad \text{and} \quad  \EExp (\T^d) := \EExp_1 (\T^d).
\end{equation*}

\medskip

\noindent
Finally, we recall the definition of the fractional Sobolev space $H^s(\R^d)$. For $s \geq 0$, we set
\begin{equation*}
  H^s (\R^d)
  = \{ u \in L^2(\R^d) : [u]_{H^s (\R^d)} < \infty \},
  \quad [u]_{H^s (\R^d)} := \int_{\R^d} |\xi|^{2s} | \mathcal F u (\xi) |^2 \, d\xi,
\end{equation*}
where $\mathcal F$ is the Fourier transform on $\R^d$. We note that the Gagliardo (semi)norm $[ \cdot ]_{H^s (\R^d)}$ is related to the usual norm on $H^s (\R^d)$ given by
$\| u \|_{H^s (\R^d)}^2 = \| u \|_{L^2 (\R^d)}^2 + [ u ]_{H^s (\R^d)}^2$. We will only be interested in the compact embedding of fractional Sobolev spaces into Orlicz spaces. For a more complete treatment of fractional Sobolev spaces we refer to \cite{DiNezzaPalatucciValdinoci12}.

\begin{lem}[Properties of $\Exp_\alpha (\T^d)$ and $L \log^\beta L (\T^d)$] \label{l:Orlicz} 
Let $d \geq 1$. The following properties are satisfied:
\begin{enumerate}[label=(\roman{*})]
    \item \label{l:O:bdd} for every $C>0$, the sublevel set $\{f\in L^1(\T^d): \Ent(|f|)\leq C\}$ is bounded in $L\log L(\T^d)$;
  \item \label{l:O:LlogL:dual} $\big( L \log^{\beta} L (\T^d) \big)^* = \Exp_{1/\beta} (\T^d)$ for all $\beta>0$; 
  \item \label{l:O:Exp:dual} $ \big( \EExp_\alpha (\T^d) \big)^* = L \log^{1/\alpha} L (\T^d)$ for all $\alpha>0$; 
   \item \label{l:O:Hol:L1} there exists a constant $C>0$ such that $\| f g \|_{L^1 (\T^d)} \leq C \| f \|_{L \log L(\T^d)} \| g \|_{\Exp (\T^d)}$ for all $f \in L \log L(\T^d)$ and all $g \in \Exp (\T^d)$;
   \item \label{l:O:Hol:LlogL} there exists a constant $C>0$ such that $\| f g \|_{L \log^{1/2} L(\T^d)} \leq C \| f \|_{L \log L(\T^d)} \| g \|_{\Exp_2 (\T^d)}$ for all $f \in L \log L(\T^d)$ and all $g \in \Exp_2(\T^d)$;
  \item \label{l:O:H1:Exp:ct} $H^{d/2} (\T^d) \hookrightarrow \Exp_2 (\T^d) \hookrightarrow \Exp_\alpha (\T^d) \hookrightarrow \EExp (\T^d) \hookrightarrow \Exp (\T^d)$ for all $1 < \alpha  < 2$.
  \item \label{l:O:H1:Exp:cp} $H^{d/2} (\T^d) \subset \subset \Exp_\alpha (\T^d)$ for all $1 < \alpha  < 2$.
\end{enumerate}

\end{lem}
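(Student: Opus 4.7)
The statement collects standard facts about Orlicz spaces and their interaction with fractional Sobolev spaces; most of them can be retrieved from the monograph~\cite{RaoRen-O} and the classical literature on Moser--Trudinger-type embeddings, so my strategy is to reduce each item either to a well-known general theorem for Young functions or to a short direct calculation on $\T^d$. I would begin by recording the asymptotic behaviour of the relevant Young functions and their inverses, namely $(\phi^\beta)^{-1}(t)\sim t/\log^\beta t$ and $\phi_\alpha^{-1}(t)\sim \log^{1/\alpha}t$ as $t\to\infty$, together with the fact that each $\phi^\beta$ satisfies the $\Delta_2$-condition while no $\phi_\alpha$ does. These asymptotics, combined with the fact that on a bounded measure space all Orlicz norms only depend on the behaviour of $\phi$ at infinity (up to equivalence), will drive the rest of the argument.

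For \ref{l:O:bdd} I would split $f=f\mathbf 1_{\{|f|\le 1\}}+f\mathbf 1_{\{|f|>1\}}$: the first piece has $\phi^1$-modular bounded by $\log(1+e)\cdot|\T^d|$, and on the second piece $\phi^1(|f|)\le C\bigl(|f|\log|f|+|f|\bigr)$, whose integral is controlled by $\Ent(|f|)+\|f\|_{L^1}$, and the latter is itself controlled by the entropy through the elementary inequality $t\ge -1/e$ applied to $t\log t$ on the bounded domain $\T^d$. Dividing $f$ by a sufficiently large constant depending only on $C$ and $|\T^d|$ then makes the modular $\le 1$, giving a uniform $L\log L$ bound. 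Items \ref{l:O:LlogL:dual} and \ref{l:O:Exp:dual} are immediate from the general Orlicz duality theorem $(E_\phi)^*\simeq L_{\phi^*}$ once one notices that $L\log^\beta L(\T^d)=E_{\phi^\beta}(\T^d)$ (by $\Delta_2$) and that the complementary Young function of $\phi^\beta$ is equivalent at infinity to $\phi_{1/\beta}$, while the complementary function of $\phi_\alpha$ is equivalent to $\phi^{1/\alpha}$; the required asymptotic equivalences follow from the formulas above by a direct Young-conjugate computation.

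The two Hölder-type bounds \ref{l:O:Hol:L1} and \ref{l:O:Hol:LlogL} are instances of the generalised Orlicz--Hölder inequality: if three Young functions satisfy $\phi_3^{-1}(t)\ge c\,\phi_1^{-1}(t)\phi_2^{-1}(t)$ for large $t$, then $\|fg\|_{L_{\phi_3}}\le C\|f\|_{L_{\phi_1}}\|g\|_{L_{\phi_2}}$. For \ref{l:O:Hol:L1} one has $(\phi^1)^{-1}(t)\phi_1^{-1}(t)\sim (t/\log t)\log t=t$, which is the inverse of $\phi_3(t)=t$, giving the $L^1$ bound. For \ref{l:O:Hol:LlogL} one checks $(\phi^1)^{-1}(t)\phi_2^{-1}(t)\sim (t/\log t)\,\log^{1/2}t=t/\log^{1/2}t\sim (\phi^{1/2})^{-1}(t)$, which is exactly the required relation.

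The embeddings in \ref{l:O:H1:Exp:ct} split into two families. The monotone chain $\Exp_2\hookrightarrow\Exp_\alpha\hookrightarrow\EExp\hookrightarrow\Exp$ follows from the pointwise ordering of the Young functions at infinity together with the fact that $\EExp$ is a closed subspace of $\Exp$; the continuity of the inclusions is then a routine consequence of \eqref{O:norm}. The critical embedding $H^{d/2}(\T^d)\hookrightarrow\Exp_2(\T^d)$ is the genuinely non-trivial step, and is the main obstacle: I would reduce to a sharp fractional Moser--Trudinger inequality on $\T^d$, expanding $u\in H^{d/2}(\T^d)$ in Fourier series and bounding $\int_{\T^d} e^{|u|^2/\lambda^2}dx$ by Taylor expansion and the Sobolev-type bound $\|u\|_{L^{2k}(\T^d)}\le C\sqrt{k}\,\|u\|_{H^{d/2}(\T^d)}$, which in turn follows from the Littlewood--Paley characterisation of $H^{d/2}$; this is the standard strategy going back to Adams, and is the route taken in \cite[Sec.~1]{CannoneEl-HajjMonneauRibaud10} for $d=2$. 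Finally, for the compact embedding \ref{l:O:H1:Exp:cp}, I would use that $H^{d/2}(\T^d)\subset\subset L^p(\T^d)$ for every $p<\infty$ (Rellich), extract from a bounded sequence in $H^{d/2}$ a subsequence converging in every $L^p$, and then interpolate in the Orlicz scale through the inequality $\|f\|_{\Exp_\alpha}\le C\|f\|_{L^p}^{1-\theta}\|f\|_{\Exp_2}^{\theta}$ for $\theta=\theta(\alpha)<1$ and $p=p(\alpha)$ large enough, which is immediate from the Taylor expansion of $\phi_\alpha$. Passing $p\to\infty$ and using the uniform $\Exp_2$-bound from \ref{l:O:H1:Exp:ct} yields strong convergence in $\Exp_\alpha$ for every $\alpha<2$, as required.
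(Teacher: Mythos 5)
Your proposal is correct, and for items \ref{l:O:bdd}--\ref{l:O:H1:Exp:ct} it is essentially the same argument as the paper's, just unpacked: where the paper cites Edmunds--Triebel for the duality statements, O'Neil and Adams--Fournier for the H\"older-type inequalities, and Adams--Fournier for the continuous embeddings, you re-derive these from the inverse-Young-function calculus ($\phi_3^{-1}\gtrsim\phi_1^{-1}\phi_2^{-1}$ for generalised H\"older, conjugation of $\phi^\beta$ and $\phi_\alpha$ for duality) and from the classical $\|u\|_{L^{2k}}\leq C\sqrt{k}\,\|u\|_{H^{d/2}}$ route to the Trudinger embedding; this is exactly the content of the cited theorems. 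The one place where you genuinely diverge is the compact embedding \ref{l:O:H1:Exp:cp}: the paper factors the embedding as $H^{d/2}(\T^d)\xrightarrow{T}H^{d/2}(\R^d)\to\Exp_\alpha(\T^d)$, invoking K\"uhn's compactness theorem for the restriction map on $\R^d$ together with an extension operator, whereas you use Rellich compactness into every $L^p(\T^d)$, $p<\infty$, and then upgrade $L^p$-convergence to $\Exp_\alpha$-convergence for $\alpha<2$ by splitting the Taylor series of the $\phi_\alpha$-modular and absorbing the tail with the uniform $\Exp_2$-bound. Your route is more self-contained (it needs no extension operator and no external compactness theorem beyond Rellich), at the cost of the explicit series manipulation; the paper's is shorter on the page but outsources the work. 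One minor point worth tightening: the inclusion $\Exp_\alpha(\T^d)\hookrightarrow\EExp(\T^d)$ for $\alpha>1$ is not a formal consequence of the ordering of the Young functions plus closedness of $\EExp$ --- you should note that truncations of an $\Exp_\alpha$-function converge to it in the $\phi_1$-Luxemburg norm because $\phi_1$ increases essentially more slowly than $\phi_\alpha$, which is what puts $L_{\phi_\alpha}$ inside $E_{\phi_1}$; this is standard but deserves a sentence.
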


\begin{proof} 
Property \ref{l:O:bdd} follows from \eqref{O:norm} by elementary estimates. 
\smallskip

\noindent
Properties \ref{l:O:LlogL:dual} and \ref{l:O:Exp:dual} are exactly \cite[Prop.~2.6.1.2 (ii)-(iii)]{EdmundsTriebel-book}. Property \ref{l:O:Hol:LlogL} is given by \cite[Thm.~2.3]{ONeil65}. The continuous embeddings in \ref{l:O:H1:Exp:ct} are given by \cite[Thm.~8.16]{AdamsFournier03}. 
\smallskip

\noindent
Property \ref{l:O:Hol:L1} follows from \cite[Thm.~8.11]{AdamsFournier03}, since  $\phi_1 (t) := e^{t} - 1$ and $\phi^1 (t) := t (\log(e + t))$ are complementary functions. To be precise, \cite[Thm.~8.11]{AdamsFournier03} is valid for $\psi_1(t) := e^{t} - t - 1$ and $\psi^1 (t) := (t+1)(\log(1 + t))-t$, which are complementary $N$- functions (unlike $\phi_1$ and $\phi^1$). In a bounded domain, however, since $\phi_1, \psi_1$ and $\phi^1, \psi^1$ have the same behaviour at infinity, the corresponding Orlicz spaces coincide, and the corresponding norms are equivalent. 
\smallskip

\noindent
Finally we establish the compact embedding in \ref{l:O:H1:Exp:cp}. First of all we 
recall that the natural embedding
$$
\textrm{id}_{\T^d}: H^{d/2} (\R^d) \to \Exp_\alpha (\T^d), \quad u\mapsto u_{|\T^d}
$$
is compact for every $1<\alpha< 2$ (this is true for every $\Omega\subset \R^d$ bounded, see e.g. \cite{Kuhn03}). We also recall that there exists an extension operator $T: H^{d/2}(\T^d) \to H^{d/2}(\R^d)$ with $\| Tu \|_{H^{d/2}(\R^d)} \leq C_d \|u\|_{H^{d/2}(\T^d)}$ for all $u \in H^{d/2}(\T^d)$ (see, e.g., \cite[Thm.~2.2]{Rych99}). Combining these two results we obtain the sought compact embedding of  $H^{d/2} (\T^d)$ into $\Exp_\alpha (\T^d)$, since we can write every $u\in H^{d/2} (\T^d)$ as $u= \textrm{id}_{\T^d}(Tu)$.
\end{proof}

\begin{lem}[Aubin-Lions-Simon] \label{l:ALS}
Let $1 \leq p, q \leq \infty$ and $X_0, X , X_1$ be Banach spaces such that $X_0 \subset \subset X \hookrightarrow X_1$. Then 
$$
\big\{ u \in L^p(0,T;X_0) : \partial_t u \in L^q(0,T;X_1) \big\}
\text{ is relatively compact in }\left\{ \begin{aligned}
  &L^p (0,T;X) 
  &&p < \infty, \\
  &C ([0,T];X) 
  &&p = \infty. 
\end{aligned} \right.
$$ 
\end{lem}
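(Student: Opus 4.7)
This is the classical Aubin-Lions-Simon lemma, and the plan is to prove it via an interpolation inequality coupled with time-translation estimates. The main analytical ingredient is the Ehrling-Lions inequality: since $X_0 \subset\subset X \hookrightarrow X_1$, for every $\varepsilon > 0$ there exists $C_\varepsilon > 0$ such that
\begin{equation*}
\|v\|_X \leq \varepsilon \|v\|_{X_0} + C_\varepsilon \|v\|_{X_1}
\quad \text{for all } v \in X_0,
\end{equation*}
established by a standard compactness-contradiction argument exploiting $X_0 \subset\subset X$. Let $(u_n)$ be a sequence with $\|u_n\|_{L^p(0,T;X_0)} + \|\partial_t u_n\|_{L^q(0,T;X_1)} \leq M$. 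Applying the inequality to $u_n(t) - u_m(t)$, taking the $p$th power and integrating in $t$ (or passing to the essential supremum when $p = \infty$) yields
\begin{equation*}
\|u_n - u_m\|_{L^p(0,T;X)} \leq 2\varepsilon M + C_\varepsilon \|u_n - u_m\|_{L^p(0,T;X_1)},
\end{equation*}
so the task reduces to extracting a subsequence which is Cauchy in $L^p(0,T;X_1)$ (respectively in $C([0,T];X_1)$ when $p = \infty$), after which $\varepsilon \to 0$ closes the argument.

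For the sub-case $p < \infty$ I would invoke the Kolmogorov-Riesz-Fréchet criterion in the Bochner space $L^p(0,T;X_1)$, for which it suffices to verify boundedness (immediate from $X_0 \hookrightarrow X_1$ and the uniform $L^p(0,T;X_0)$ bound), pointwise relative compactness of the values (since the essential range of each $u_n(t)$ sits in a bounded subset of $X_0$, hence in a relatively compact subset of $X_1$), and equicontinuity of time translations. The last property follows from the fundamental theorem of calculus for Bochner integrals combined with Hölder's inequality:
\begin{equation*}
\|u_n(t+h) - u_n(t)\|_{X_1} \leq \int_t^{t+h} \|\partial_t u_n(s)\|_{X_1}\, ds \leq h^{1 - 1/q} \|\partial_t u_n\|_{L^q(0,T;X_1)},
\end{equation*}
so that $\|\tau_h u_n - u_n\|_{L^p(0,T-h;X_1)} \to 0$ as $h \to 0^+$, uniformly in $n$. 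When $q = 1$, the exponent formula fails, but the same conclusion follows from the absolute continuity of the Bochner integral of a fixed $L^1$ function, applied uniformly via an equi-integrability argument.

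For the sub-case $p = \infty$ I would instead run a refined Arzelà-Ascoli argument directly in $C([0,T];X)$: the inclusion $L^\infty(0,T;X_0) \cap W^{1,q}(0,T;X_1) \hookrightarrow C([0,T];X_1)$ and the estimate above give equicontinuity of $(u_n)$ into $X_1$, while the essential $X_0$-bound places $u_n(t)$ in a fixed compact subset of $X$ for a.e.\ $t$ (and hence, after choosing continuous representatives using $u_n \in C([0,T];X_1)$, for every $t$). Pointwise subsequential limits in $X$ exist, a diagonal procedure over a countable dense set of times yields pointwise convergence in $X$, and Ehrling-Lions promotes this to uniform convergence in $X$ on $[0,T]$.

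\textbf{Main obstacle.} The most subtle technical point is the treatment of $q = 1$, where no Hölder continuity in time is available and equicontinuity must instead be extracted from the absolute continuity of the Bochner integral $s \mapsto \int_0^s \partial_t u_n(\tau)\, d\tau$, together with equi-integrability of $(\partial_t u_n)$ in $L^1(0,T;X_1)$ (which is not automatic from boundedness and would need a Dunford-Pettis-type input or a direct truncation). A secondary, more routine difficulty is that for $p = \infty$ the pointwise $X_0$-bound holds only a.e.\ in $t$, so one must carefully pick the $X_1$-continuous representative and verify that its values remain in the relevant compact subset of $X$ at \emph{every} $t$, in order to legitimately apply Arzelà-Ascoli.
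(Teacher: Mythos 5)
The paper states this lemma without proof, citing it implicitly as the classical Aubin--Lions--Simon compactness theorem (Simon, \emph{Ann.\ Mat.\ Pura Appl.}~146 (1987)), so there is no in-paper argument to compare against; your task was effectively to reconstruct a known theorem. Your overall strategy --- the Ehrling/Lions interpolation inequality combined with a Fr\'echet--Kolmogorov criterion for $p<\infty$ and a refined Arzel\`a--Ascoli argument for $p=\infty$ --- is indeed the standard route, and the skeleton is sound.

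Two points deserve attention, though. First, for $p<\infty$ the justification ``the essential range of each $u_n(t)$ sits in a bounded subset of $X_0$'' is false: a uniform $L^p(0,T;X_0)$ bound with $p<\infty$ does not bound $\|u_n(t)\|_{X_0}$ pointwise, so you cannot read off pointwise relative compactness of $\{u_n(t)\}_n$ in $X$ that way. The correct substitute is either Simon's averaged criterion (relative compactness of $\{\int_{t_1}^{t_2} u_n\,dt : n\}$ in $X$, which does follow from the $L^p(X_0)$ bound via H\"older and the compact embedding $X_0\subset\subset X$) or a Chebyshev truncation showing the $L^p$-mass carried outside a fixed compact of $X$ is uniformly small. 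Second, your ``main obstacle'' for $q=1$ is sharper than you may realise: for $p=\infty$, $q=1$ the statement as written in the lemma is in fact \emph{false} (e.g.\ $X_0=X=X_1=\R$, $u_n(t)=\min(nt,1)$ on $[0,1]$ is bounded in $L^\infty$ with $\partial_t u_n$ bounded in $L^1$, yet has no subsequence converging in $C([0,1])$); Simon's theorem requires $q>1$ when $p=\infty$. The over-claim is harmless to the paper, which only invokes the lemma with $q=2$. For $p<\infty$, $q=1$ the result \emph{does} hold, but not by your proposed route: translates of $u_n$ are uniformly small in $L^1(0,T-h;X_1)$ (integrating the Bochner FTC bound in $t$ and using Fubini, no equi-integrability of $\partial_t u_n$ needed), and Simon then upgrades this to smallness in $L^p(0,T-h;X)$ via an interpolation/averaging lemma, rather than trying to run Fr\'echet--Kolmogorov in $L^p(X_1)$ directly. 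So no ``Dunford--Pettis input'' is required; the statement is simply true for $p<\infty$, $q\geq1$, but your patch would need to be replaced by the Simon-style translation argument.
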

\bigskip


\noindent\textbf{Acknowledgments}.
PvM is supported by the International Research Fellowship of the Japanese Society for the Promotion of Science, together with the JSPS KAKENHI grant 15F15019. MAP and LS acknowledge support from NWO grant 639.033.008. LS acknowledges support by the EPSRC under the grant EP/N035631/1.

The authors declare to have no conflict of interest.

\bibliographystyle{alpha} 
\newcommand{\etalchar}[1]{$^{#1}$}

\end{document}